
\documentclass[DIV=13,abstract=true,paper=a4,fontsize=11pt,parskip=half]{scrartcl}

\RequirePackage{booktabs}
\RequirePackage{dsfont}
\RequirePackage{microtype}
\usepackage{lmodern}
\usepackage{inputenc}
\usepackage[T1]{fontenc}
\usepackage[USenglish]{babel}
\usepackage{latexsym}
\usepackage{amsmath,amsfonts,amssymb,amsthm,amsopn}
\usepackage[centercolon]{mathtools}
\usepackage{fixmath}
\usepackage{braket}
\usepackage{graphicx}
\graphicspath{{pics/}}
\makeatletter
\def\input@path{{pics/}}
\makeatother
\usepackage{epstopdf}
\usepackage[svgnames]{xcolor}
\usepackage{tikz}
\usepackage{pgfplots,pgfplotstable}
\usepackage[algo2e,ruled,linesnumbered]{algorithm2e}
\usepackage{subcaption}
\usepackage{pdflscape}
\pgfplotsset{compat=newest}
\pgfplotsset{scaled y ticks=false}
\usepgfplotslibrary{groupplots}
\usepgfplotslibrary{dateplot}
\pgfplotsset{plot coordinates/math parser=false}
\usepackage{nameref}
\usepackage{xspace}
\usepackage{rcs}
\usepackage{enumerate}
\usepackage{url}
\usepgfplotslibrary{external} 
\usetikzlibrary{external}
\usepgfplotslibrary{statistics}
\usetikzlibrary{pgfplots.statistics}
\usepackage{colortbl}
\usepackage{multirow}
\usepackage{array}
\usepackage{makecell}
\usepackage{enumerate}
\usepackage[pdftex,pagebackref,bookmarks=true,pdfpagelabels=true,plainpages=false,hypertexnames=false,bookmarksnumbered=true,bookmarksopen=true,pdfstartview=Fit,colorlinks,linkcolor=DarkBlue,citecolor=DarkGreen,urlcolor=DarkRed]{hyperref}
\usepackage[nameinlink,capitalise]{cleveref}

\SetKw{KwTerminate}{terminate}

\SetCommentSty{mycommfont}
\crefformat{equation}{(#2#1#3)}
\crefname{assumption}{Assumption}{Assumptions}
\Crefname{assumption}{Assumption}{Assumptions}
\crefname{figure}{Figure}{Figures}
\theoremstyle{plain}
\newtheorem{theorem}{Theorem}[section]
\newtheorem{lemma}[theorem]{Lemma}
\newtheorem{corollary}[theorem]{Corollary}
\theoremstyle{definition}
\newtheorem{definition}[theorem]{Definition}
\newtheorem{assumption}[theorem]{Assumption}

\theoremstyle{remark}
\newtheorem{remark}[theorem]{Remark}

\providecommand{\MSC}[1]{\textbf{Mathematics Subject Classification (2020).} #1}

\newdimen\fwd

\newcommand{\norm}[1]{\ensuremath{\lVert #1\rVert}}
\newcommand{\xopt}{\ensuremath{x^\ast}}
\newcommand{\BB}{Barzilai--Borwein}
\newcommand{\BBB}{BB}
\newcommand{\WP}{Wolfe--Powell}
\newcommand{\MT}{Moré--Thuente}
\newcommand{\Ballop}[1]{\ensuremath{\mathbb{B}_{#1}}}
\newcommand{\Lin}{\ensuremath{\mathcal{L}}}
\newcommand{\LinPDX}{\ensuremath{\mathcal{L}_+(\CX)}}
\newcommand{\brk}{\ensuremath{break}}
\newcommand{\outp}{\ensuremath{output}}
\newcommand{\CP}{\#{\cal P}}
\newcommand{\CI}{{\cal I}}
\newcommand{\fopt}{{f^\ast}}
\newcommand{\CX}{{\cal X}}
\newcommand{\nbhd}{{\cal N}}
\newcommand{\R}{\mathbb{R}} 
\newcommand{\N}{\mathbb{N}} 
\newcommand{\Hy}{-}
\newcommand{\Hs}{+}
\newcommand{\LBFGS}{\mbox{L-BFGS}}

\title{A globalization of \LBFGS~and the \BB~method for nonconvex unconstrained optimization}

\author{   Florian Mannel\thanks{Institute of Mathematics and Image Computing, University of Lübeck, Maria-Goeppert-Straße 3, 23562 
		Lübeck, Germany (\href{mailto:florian.mannel@uni-luebeck.de}{florian.mannel@uni-luebeck.de})}\hspace*{0.05cm}
	\href{https://orcid.org/0000-0001-9042-0428}{\includegraphics[height=.35cm]{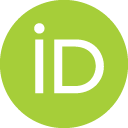}}   }

\date{Preprint, \today}

\ifpdf
\hypersetup{
	pdftitle={A globalization of \LBFGS~and the \BB~method for nonconvex unconstrained optimization},
	pdfsubject={A globally convergent version of the \LBFGS- and \BB-method},
	pdfauthor={F. Mannel},
	pdfkeywords={nonconvex optimization, cautious updates, limited memory quasi-Newton methods, BFGS methods, Barzilai--Borwein methods, convergence analysis, Hilbert space}
}
\fi

\pagestyle{headings}

\begin{document}

\maketitle

\begin{abstract}
	We present a modified limited memory BFGS (\LBFGS) method that converges globally and linearly for nonconvex objective functions. 
	Its distinguishing feature is that it turns into \LBFGS~if the iterates cluster at a point near which the objective is strongly convex with Lipschitz gradients, thereby inheriting the outstanding effectiveness of the classical method. 
	These strong convergence guarantees are enabled by a novel form of cautious updating, where, among others, 
	it is decided anew in each iteration which of the stored pairs are used for updating and which ones are skipped. 
	In particular, this yields the first modification of cautious updating for which all cluster points are stationary 
	while the spectrum of the L-BFGS operator is not permanently restricted,
	and this holds without Lipschitz continuity of the gradient. 
	In fact, for \WP~line searches we show that continuity of the gradient is sufficient for global convergence, 
	which extends to other descent methods. 
	Since we allow the memory size to be zero in the globalized \LBFGS~method, we also obtain a new globalization of the \BB~spectral gradient~(\BBB) method.
	The convergence analysis is developed in Hilbert space under 
	comparably weak assumptions and covers Armijo and \WP~line searches.
	We illustrate the theoretical findings with numerical experiments. The experiments indicate that if one of the parameters of the cautious updating is chosen sufficiently small, then the modified method agrees entirely with \LBFGS/\BBB. We also discuss this in the theoretical part. 
	An implementation of the new method is available on \textsc{arXiv}.
\end{abstract}

\begin{keywords}
	L-BFGS, Barzilai--Borwein methods, cautious updates, nonconvex optimization, global convergence, linear convergence, Hilbert space
\end{keywords}

\MSC{65K05, 65K10, 90C06, 90C26, 90C30, 90C48, 90C53}



\section{Introduction}\label{sec_intro}

\LBFGS~\cite{N80,LN89,BNS94} is one of the most popular methods for large-scale unconstrained optimization problems. 
Among others, it is used in geosciences \cite{LBLPT21}, image registration \cite{AMM23}, computer-generated holography \cite{SKYWW23} and machine learning \cite{GRB20}. Despite the maturity of \LBFGS, there are still fundamental open questions, for instance
if \LBFGS~converges globally on nonconvex problems. 

In this paper, we study a globally convergent version of \LBFGS~for the problem
\begin{equation}\label{eq_prob}\tag{P}
	\min_{x\in\CX} \, f(x),
\end{equation}
where $f:\CX\rightarrow\R$ is continuously differentiable and bounded below, and $\CX$ is a Hilbert space.
The convergence analysis includes the possibility that the memory size in \LBFGS~is zero, in which case the new method becomes a globalized \emph{\BB~(BB) method} \cite{BB88,R97,DL02,DF05,DHSZ06}, also called \emph{spectral gradient method}.  
Since they are suited for large-scale problems, \BBB-type methods have recently received renewed interest \cite{DKPS15,BDH19,DHL19,AK20,LMP21,AK22}.

To ensure global convergence on nonconvex problems, various modified \LBFGS~methods have been developed. 
To the best of our knowledge, however, they all suffer from at least one of the following issues:
\begin{itemize}
	\item Global convergence is often established as $\liminf_{k\to\infty}\norm{\nabla f(x_k)}=0$, e.g., \cite{KD15,YWS20,YZZ22}; this does not guarantee that cluster points of $(x_k)$ are stationary. 
	\item If rate of convergence results are provided, they frequently rely on assumptions whose satisfaction is unclear in the nonconvex case; examples include 
	convergence of $(x_k)$ in \cite[Section~3]{DL02} and the existence of $c>0$ such that 
	$y_k^T s_k > c \norm{y_k}\norm{s_k}$ for all $k$ in \cite[Theorem~3.2]{BBEM19}.
	In particular, these properties may not hold even if $\liminf_{k\to\infty}\norm{\nabla f(x_k)}=0$, $(x_k)\subset\R^N$ is bounded, and $f$ is $C^\infty$ with a single stationary point that satisfies sufficient optimality conditions and that is the unique global minimizer. 
	\item Some methods rely on beforehand knowledge of the local modulus of strong convexity. 
	This comprises, for instance, methods that 
	skip the update if $y_k^T s_k \geq \mu \norm{s_k}^2$ is violated, where $\mu>0$ is chosen at the beginning of the algorithm, e.g., \cite{BJRT22,AMM23}. 
	Clearly, if $f$ is strongly convex and $\mu$ is no larger than the modulus of strong convexity, all updates are carried out (as in \LBFGS). On the other hand, if $\mu$ is too large, many or even all updates may be skipped, which will usually diminish the efficiency significantly. In nonconvex situations this problem appears if $(x_k)$ converges to a point near which $f$ is strongly convex. In globalized \BBB~methods and \LBFGS~with 
	seed matrices $H_0^{(k)}=\gamma_k I$, this issue (also) arises when the step sizes, respectively, the scaling factors $(\gamma_k)$ are safeguarded away from zero.
	Ultimately, all these constructions permanently restrict the spectrum of the \LBFGS~operator $H_k$ (cf.~\Cref{lem_generalresultonnormbounds} for a proof of this fact). 
\end{itemize}

The method provided in this paper does not suffer from any of these issues, but instead comes with the following strong convergence guarantees:
\begin{enumerate}
	\item[1)] every cluster point is stationary, cf. \Cref{thm_clusterpointsarestationar};
	\item[2)] $\lim_{k\to\infty}\norm{\nabla f(x_k)}=0$ if $\nabla f$ is uniformly continuous in some level set, cf. \Cref{thm_globconv}; 
	\item[3)] if the iterates cluster at a point near which $f$ is strongly convex and near which $\nabla f$ is Lipschitz, then they converge to this point at a linear rate and the method agrees with classical \LBFGS~after a finite number of iterations, cf.~Theorems~\ref{thm_linconv} and \ref{thm_lbfgsmislbfgs}; \item[4)] under the assumptions of 3), all update pairs are eventually stored and applied, and any 
	$\gamma_k$ that lies between the two \BB~step sizes (see Definition~\ref{def_tau_B}) is eventually accepted, cf. \Cref{thm_lbfgsmislbfgs}.
\end{enumerate}
To the best of our knowledge, the new method is the only \LBFGS-type method that satisfies both 1) and 3). 
As a consequence of 3), the new method will often inherit the supreme efficiency of \LBFGS. 
In fact, our numerical experiments indicate that the new method agrees \emph{entirely} with \LBFGS/\BBB~if one of its algorithmic parameters is chosen sufficiently small. 
This may be regarded as an explanation why \LBFGS/\BBB~often converges for nonconvex problems, but it is also noteworthy since some authors report that cautious updating can degrade the performance, at least for BFGS, cf. \cite[p.18]{GR23}.
A discussion of this agreement is offered in Section~\ref{sec_transitiontoLBFGS}, particularly in \Cref{rem_LBFGSMagreeswithLBFGSforsmallconstants}. 

Many existing globalizations of the \BB~method also permanently restrict the spectrum of $H_k=H_k^{(0)}$, 
so the techniques and results in this paper are of interest in the context of \BBB~methods, too.
That being said, \emph{nonmonotone} line searches, which
are understood to be more effective for \BBB-type methods \cite{AK20}, are 
only addressed in the numerical experiments in Section~\ref{sec_experiments}, but not in the convergence analysis.

The convergence guarantees of the new method and the transition to classical \LBFGS~rely on a new form and a careful calibration of the \emph{cautious updates} introduced for BFGS by Li and Fukushima \cite{LiFu01}.
We discuss these points further once we have introduced the globalized \LBFGS~method in Section~\ref{sec_LBFGSM}, respectively, once we have introduced its calibration in \Cref{ass_linconv}, but let us stress that the new \LBFGS~method proposed in this paper is fully compatible with the techniques used in efficient numerical realizations of \LBFGS. In particular, it can still be implemented matrix free based on the well-known two-loop recursion and it has essentially the same costs per iteration as \LBFGS. 

We strive to use the weakest possible assumptions in the convergence analy\-sis.  
For instance, in \Cref{thm_linconv} we prove that the new method converges globally and linearly 
under purely \emph{local} assumptions except for the requirement that $f$ has to be bounded below. 
Specifically, $f$ needs to be continuously differentiable and bounded below, and $(x_k)$ has to have a cluster point near which $f$ is strongly convex and $\nabla f$ is Lipschitz.
In contrast, in the existing literature for \LBFGS-type methods it is frequently assumed that $\nabla f$ is Lipschitz continuous in the level set $\Omega$ associated to the initial point, that $\Omega$ is bounded and that $f$ is twice continuously differentiable; cf. \cite{LN89,ABGP14,KD15,BBEM19,BJRT22,TSY22}. 
Note, however, that if $f$ is actually twice continuously differentiable, then our assumption of strong convexity near a cluster point is equivalent to the Hessian of $f$ being positive definite at that cluster point. We avoid using this classical second order sufficient condition for optimality because we do not want to require the existence of second derivatives. 
The results of this work hold not only for the \WP~conditions but also for backtracking line searches based on the Armijo condition. 
To the best of our knowledge, it has not been proven before that the \WP~conditions imply global convergence if $\nabla f$ is merely continuous, cf. \Cref{thm_clusterpointsarestationar}. Since this result can be extended to other descent methods, it may be of interest beyond this work. 

Another improvement of existing results concerns the \emph{type} of linear convergence. The strongest available result for 
the original \LBFGS~method is the classical \cite[Theorem~7.1]{LN89},
where it is shown that $(f(x_k))$ converges q-linearly and $(x_k)$ converges r-linearly. We obtain the same rate for $(f(x_k))$,
but for $(x_k)$ we prove a stronger error estimate that 
implies \emph{$l$-step q-linear convergence} for all sufficiently large $l$. 
A sequence that converges $l$-step q-linearly for a single $l$ is r-linearly convergent, but not vice versa. 
We discuss $l$-step q-linear convergence further in Section~\ref{sec_linconv}.
Moreover, we show that $(\nabla f(x_k))$ satisfies a similar error estimate and we assess these theoretical results in the numerical experiments in Section~\ref{sec_experiments}. 
We are not aware of works on \LBFGS~that involve multi-step q-linear convergence, but it frequently appears in results for \BB~methods \cite{DL02,AK20}.
We establish the improved rates for our method in \Cref{thm_linconv} under the aforementioned assumptions.
In addition, we infer that if $f$ is strongly convex in $\Omega$, then the result also holds for classical \LBFGS, cf. \Cref{rem_thmlinconv}. 
This improves existing results such as \cite[Theorem~7.1]{LN89} by lowering its assumptions while sharpening its conclusion.

There are few works that provide a convergence analysis of \LBFGS~in Hilbert space. 
In fact, a Hilbert space setting is only considered in our work~\cite{AMM23} for a structured \LBFGS~method
and in \cite{AK20,AK22} for the \BBB~method.
For BFGS, characterizations of the update and convergence analyses are available in Hilbert space \cite{GriewankInfDimBFGS,Gilbert1989,MayorgaQuintanaInfDimQN,PetraInfDimQN}. 
Still, the techniques that we use differ from those in the literature on \LBFGS~and BFGS. 
For instance, by not relying on traces we avoid the restriction to work with trace class operators, 
which would severely limit the applicability of our results for infinite dimensional~$\CX$ because trace class operators are compact
and require $\CX$ to be separable.  
Studying \LBFGS~in Hilbert space is valuable because when the (globalized) \LBFGS~method is applied to a discretization of an infinite dimensional optimization problem, its numerical performance is usually closely related to the convergence properties of the method on the infinite dimensional level, see for instance the example in \cite[Section~1]{GriewankInfDimBFGS}. 
This relationship appears, among others, in the form of \emph{mesh independence} \cite{ABPR86,KS87,AK22}.
In Section~\ref{sec_experiments} we validate the method's mesh independence numerically for an optimal control problem.		

BFGS and \LBFGS~have been modified in various ways to obtain global convergence for nonconvex objectives. 
For line search based methods we are aware of \emph{cautious updating} \cite{LiFu01,WLQ06,BJRT22,YZZ22,LWH22}, 
\emph{modified updating} \cite{LF01,WLQ06,KD15,YWS20}, 
\emph{dam\-ped updating} \cite{P78,G91,ABGP14,S16} and 
\emph{modification of the line search} \cite{YWL17,HD20}. 
Other options for globalization include \emph{trust region approaches}, cf. \cite{BGZY17,BBEM19,BMPS22} 
and the references therein,
\emph{iterative regularization} \cite{L14,TSY22,KS23} as well as \emph{robust BFGS} \cite{Y22}.
For \BBB~methods global and linear convergence in a nonconvex setting is shown in \cite{DL02}.
As pointed out before, however, there is no \LBFGS-type method available for nonconvex objectives that converges globally in the sense that every cluster point is stationary while also recovering classical \LBFGS. 
For the standard cautious updating from \cite{LF01} we are not aware of works that show that every cluster point is stationary. 

A joint implementation of the new method and standard \LBFGS~is available on \textsc{arXiv} with the \href{https://doi.org/10.48550/arXiv.2401.03805}{preprint of this paper}. 

\subsection{Organization and notation}

The paper is organized as follows. To set the stage, Section~\ref{sec_LBFGS} recalls the classical \LBFGS~method. 
In Section~\ref{sec_LBFGSM} we present the modified \LBFGS~method. 
Its convergence is studied in Section~\ref{sec_convana}. 
Section~\ref{sec_experiments} contains numerical experiments and
Section~\ref{sec_conclusion} concludes with a summary.

\emph{Notation}. We use $\N=\{1,2,3,\ldots\}$ and $\N_0=\N\cup\{0\}$.
In common abuse of notation \cite{GriewankInfDimBFGS,Gr87}
the scalar product of $v,w\in\CX$ is indicated by $v^T w$
and the linear functional $w\mapsto v^T w$ is denoted by $v^T$. 
In particular, due to the Riesz representation theorem 
there is a unique element of $\CX$, denoted $\nabla f(x)$, such that $f'(x)=\nabla f(x)^T$. 
Note that for $\CX=\R^N$ this implies that the operator $^T$ equals transposition only if the Euclidean inner product is used
and similarly the gradient is not generally equal to the vector of partial derivatives. 
We write $M\in\Lin(\CX)$ if $M:\CX\rightarrow\CX$ is 
a bounded linear operator with respect to the operator norm. 
An $M\in\Lin(\CX)$ is self-adjoint iff it satisfies $x^T M y = (Mx)^T y$ for all $x,y\in\CX$.
It is positive definite, respectively, positive semi-definite
iff it is self-adjoint and there exists $\beta>0$, respectively, $\beta\geq 0$ such that 
$x^T M x\geq \beta\norm{x}^2$ for all $x\in\CX\setminus\{0\}$. 
We set $\LinPDX:=\{M\in\Lin(\CX): \,M \text{ is positive definite}\}$.
	

\section{The classical \LBFGS~method}\label{sec_LBFGS}

In this section we discuss the aspects of the classical \LBFGS~method for \cref{eq_prob} that are relevant to this work. The pseudo code of the method is given below as Algorithm~\ref{alg_LBFGS}. 

\begin{algorithm2e}[h!]
	\SetAlgoRefName{LBFGS}
	\DontPrintSemicolon
	\caption{Inverse \LBFGS~method}
	\label{alg_LBFGS}
	\KwIn{$x_0\in \CX$, $m\in\N_0$}
	Let $\CI:=\emptyset$\\
	\For{$k=0,1,2,\ldots$}
	{
		\lIf{$\norm{\nabla f(x_k)}=0$}{\outp~$x_k$ and \brk}
		Choose $H_k^{(0)}\in\LinPDX$\tcp*{Often, $H_k^{(0)}=\gamma_k I$ for some $\gamma_k>0$}
		Compute $d_k:=-H_k\nabla f(x_k)$ from $H_k^{(0)}$ and the stored pairs 
		$\{(s_j,y_j)\}_{j\in\CI}$\label{line_tlrunstrbfgs}\\ 
	Compute step size $\alpha_k>0$\\ 
	Compute $s_k:=\alpha_k d_k$, $x_{k + 1}:=x_k + s_k$, $y_k:=\nabla f(x_{k+1})-\nabla f(x_k)$\\
	\lIf{$y_k^T s_k > 0$}{append $(s_k,y_k)$ to storage and redefine $\CI:=\CI\cup\{k\}$\label{line_skykappendlbfgs}}
	\lIf{$\lvert\CI\rvert>m$}{let $n:=\min\,\CI$, remove $(s_n,y_n)$ from storage and redefine $\CI:=\CI\setminus\{n\}$\label{line_skykremovelbfgs}}
}
\end{algorithm2e}

\subsection{The \LBFGS~operator $H_k$}

We recall the definition of the operator $H_k\in\Lin(\CX)$ appearing in Line~\ref{line_tlrunstrbfgs}. 
It is easy to see that $\lvert\CI\rvert\leq m$ holds in Line~\ref{line_tlrunstrbfgs} for any $k$, so there is $r=r(k)$ with $0\leq r\leq m$ such that $\CI=\{k_0,k_1,\ldots,k_{r-1}\}\subset\N_0$.
If $\CI=\emptyset$, we let $H_k:=H_k^{(0)}$. Otherwise, we simplify notation by identifying $k_j$ with $j$ for $j=0,\ldots,r-1$.
That is, $\CI=\{0,1,\ldots,r-1\}$ and the storage is given by $\{(s_j,y_j)\}_{j=0}^{r-1}$ with $y_j^T s_j>0$ for $j=0,\ldots,r-1$ (since $(s_k,y_k)$ is only stored if $y_k^T s_k>0$). 
Endowed with this notation, the operator $H_k$ is obtained as $H_k:=H_k^{(r)}$ from the \emph{seed matrix} $H_k^{(0)}$ and the storage 
$\{(s_j,y_j)\}_{j\in\CI}$ by means of the recursion
\begin{equation}\label{eq_defLBFGSupd2}
	H_k^{({j+1})} = V_{j}^T H_k^{(j)} V_{j} + \rho_{j} s_{j} s_{j}^T, \qquad j=0,\ldots,r-1,
\end{equation}
where $V_j:=I-\rho_{j} y_{j} s_{j}^T$ and $\rho_{j} := (y_{j}^T s_{j})^{-1}$. 
From the Sherman--Morrison formula we infer that $B_k=H_k^{-1}$	can be obtained by setting $B_k:=B_k^{(r)}$ in the recursion
\begin{equation}\label{eq_defLBFGSupd}
	B_k^{(j+1)}:=B_k^{(j)}-\frac{B_k^{(j)}s_j (B_k^{(j)}s_j)^T}{(B_k^{(j)}s_j)^T s_{j}}+\frac{y_{j}y_{j}^T}{y_{j}^T s_{j}}, \qquad j=0,\ldots,r-1,
\end{equation}
where $B_k^{(0)}:=(H_{k}^{(0)})^{-1}$. 
We stress that for $\CX=\R^N$ the transpose operator in \cref{eq_defLBFGSupd2} and \cref{eq_defLBFGSupd} does \emph{not} agree with the transposition of a vector unless the Euclidean scalar product is used on $\CX$, 
cf. the paragraph \emph{Notation} above. 
In practice we do not form $H_k$, but compute the search direction 
$d_k = -H_k\nabla f(x_{k})$ in a matrix free way through the \emph{two-loop recursion} \cite[Algorithm~7.4]{NW06}
(with the transpositions replaced by scalar products). 
This computation is very efficient, enabling the use of \LBFGS~for large-scale problems. 

\subsection{Choice of the seed matrix $H_k^{(0)}$}

The most common choice of the seed matrix $H_k^{(0)}$ is $H_k^{(0)}=\gamma_k I$, where 
\begin{equation}\label{eq_choceofscfct}
	\gamma_k = \frac{y_{k-1}^T s_{k-1}}{\norm{y_{k-1}}^2},
\end{equation}
cf. \cite[(7.20)]{NW06}, but other choices of $\gamma_k$ and of $H_k^{(0)}$ have also been studied, e.g., in \cite{Oren1982,LN89,Gilbert1989} and \cite{Marjugi2013,An21}.
In particular, the following two values are well-known choices for $\gamma_{k+1}$, where the index shift is for later reference. 	

\begin{definition}\label{def_tau_B}
	Let $k\in\N_0$ and let $(s_k,y_k)\in\CX\times\CX$ with $y_k^T s_k>0$. We set
	\begin{equation*}
		\gamma_{k+1}^\Hy:=\frac{y_k^T s_k}{\norm{y_k}^2} 
		\qquad\text{ and }\qquad
		\gamma_{k+1}^\Hs:=\frac{\norm{s_k}^{2}}{y_k^T s_k}.
	\end{equation*}
\end{definition}

To simplify the presentation of the modified \LBFGS~method in Section~\ref{sec_LBFGSM}, we restrict attention to choices of the form 
$H_k^{(0)}=\gamma_k I$ with $\gamma_k\in[\gamma_{k}^\Hy,\gamma_{k}^\Hs]$.
For memory size $m=0$ this yields $d_k=-\gamma_k\nabla f(x_k)$,
which shows that in this case Algorithm~\ref{alg_LBFGS} may be viewed as a globalized \BBB~method. For \BBB~methods, the values $\gamma_{k}^\Hy$ and $\gamma_{k}^\Hs$ were introduced in \cite{BB88} and they are sometimes referred to as the two \BB~step sizes. 
The restriction $\gamma_k\in[\gamma_{k}^\Hy,\gamma_{k}^\Hs]$ appears for instance in the spectral gradient methods of \cite{DHL19}, a work that provides ample references to related \BBB~methods.

\begin{remark}
	The Cauchy--Schwarz inequality implies $\gamma_{k+1}^\Hy\leq\gamma_{k+1}^\Hs$.
	Furthermore, it is well-known that for twice differentiable $f$ with positive definite Hessians, the numbers $\gamma_{k+1}^\Hy$ and $\gamma_{k+1}^\Hs$ fall within the spectrum of the inverse of the \emph{averaged Hessian} $\int_0^1\nabla^2f(x_k+t s_k)\,\mathrm{d}t$,  cf. \cite{AMM23} for a statement in Hilbert space. 
	If $f$ is quadratic with positive definite Hessian $B$, we have 
	$B s_k=y_k$, hence $\gamma_{k+1}^\Hs$ is the inverse of a Rayleigh quotient of $B$ and $\gamma_{k+1}^\Hy$ is a Rayleigh quotient of $B^{-1}$. 
\end{remark}

\subsection{Choice of the step size $\alpha_k$}
Most often, Algorithm~\ref{alg_LBFGS} is employed with a line search that ensures the satisfaction of the Wolfe--Powell conditions or the strong Wolfe--Powell conditions.
That is, for constants $\sigma\in(0,1)$ and $\eta\in (\sigma,1)$, the step sizes $\alpha_k$ are selected in such a way that they 
satisfy the Armijo condition 
\begin{equation}\label{eq_armijocond}
	f(x_{k+1}) \leq f(x_k) + \alpha_k\sigma\nabla f(x_k)^T d_k
\end{equation}
and either the curvature condition or the strong curvature condition	
\begin{equation}\label{eq_WolfePowellcond}
	\nabla f(x_{k+1})^T d_k \geq \eta\nabla f(x_k)^T d_k, \enspace\text{resp. }\enspace
	\left\lvert\nabla f(x_{k+1})^T d_k\right\rvert \leq \eta\left\lvert\nabla f(x_k)^T d_k\right\rvert.
\end{equation}
This entails that $y_k^T s_k>0$, so the current secant pair $(s_k,y_k)$ is guaranteed to enter the storage in Line~\ref{line_skykappendlbfgs},
which slightly simplifies the algorithm, e.g., \cite[Algorithm~7.5]{NW06}, 
and ensures that the storage contains the most recent secant information. 
Since we also consider line searches that only involve the Armijo condition \cref{eq_armijocond}, we may have $y_k^T s_k\leq 0$. 
In this case the pair $(s_k,y_k)$ is not stored because using it in the construction of $H_{k+1}$ would	result in $H_{k+1}$ not being positive definite. 
While this skipping of secant information seems undesirable, it can still pay off to work only with the Armijo condition 
\cite{ACG18,AMM23}. 
If step sizes are based on \cref{eq_armijocond} only, we demand that they are computed by backtracking. 
That is, if $\alpha_{k,i}$ does not satisfy \cref{eq_armijocond}, then the next trial step size $\alpha_{k,i+1}$ is chosen from $[\beta_1\alpha_{k,i},\beta_2\alpha_{k,i}]$, where $i=0,1,\ldots$, $\alpha_{k,0}:=1$ and $0<\beta_1\leq\beta_2<1$ are constants. 
This includes step size strategies that involve interpolation \cite[Section~6.3.2]{DS96}.
	
\subsection{Effect of update skipping on the storage}\label{sec_removpairsfromstoragenonWP}

Observe that if no new pair enters the storage in Line~\ref{line_skykappendlbfgs}, then no old pair will be removed in Line~\ref{line_skykremovelbfgs}, so 
we allow older information to be retained. 
This is a design choice and other variants are conceivable, e.g., 
we could always remove $(s_{k-m},y_{k-m})$ from the storage and set $\CI:=\CI\setminus\{k-m\}$ in Line~\ref{line_skykremovelbfgs}. 
This would not affect the convergence analysis of this work in a meaningful way. 


\section{The modified \LBFGS~method}\label{sec_LBFGSM}	

In this section we present the new method, point out its novelties and discuss how they
relate to \emph{cautious updating} \cite{LiFu01} introduced by Li and Fukushima. 

To state the pseudo code of the method, let us denote 
\begin{equation*}
	q:\CX\times\CX\rightarrow\R, \qquad q(s,y):=
	\begin{cases}
		\min\left\{\frac{y^T s}{\norm{s}^2},\,\frac{y^T s}{\norm{y}^2}\right\} & \text{ if }s\neq 0 \text{ and } y\neq 0,\\
		0 & \text{ else}.
	\end{cases}
\end{equation*}
The new method Algorithm~\ref{alg_hybrid} (=\,\underline{\LBFGS}, \underline{m}odified) reads as follows. 

\begin{algorithm2e}[ht!]
	\SetAlgoRefName{LBFGSM}
	\DontPrintSemicolon
	\caption{The modified inverse \LBFGS~method}
	\label{alg_hybrid}
	\KwIn{$x_0\in \CX$, $m\in\N_0$, $c_0\in(0,1]$, $c_1,c_2\in(0,\infty)$}
Let $\CI:=\emptyset$, $\gamma_0^\Hy:=0$, $\gamma_0^\Hs:=\infty$\\
\For{$k=0,1,2,\ldots$}
{
\lIf{$\norm{\nabla f(x_k)}=0$}{\outp~$x_k$ and \brk\label{line_term}}
Let $\omega_k:=\min\{c_0,c_1\norm{\nabla f(x_k)}^{c_2}\}$\label{line_defomegak}\\
Choose $\gamma_k\in [\gamma_k^\Hy,\gamma_k^\Hs]\cap [\omega_k,\omega_k^{-1}]$ if possible; else choose $\gamma_k\in [\omega_k,\omega_k^{-1}]$
\label{line_choicetauk}\\
Let $H_k^{(0)}:=\gamma_k I$\\
Compute $d_k:=-H_k\nabla f(x_k)$ from $H_k^{(0)}$ and the pairs in $\{(s_j,y_j)\}_{j\in\CI}$ with $q_j\geq\omega_k$\label{line_tlr}\\
Compute step size $\alpha_k>0$\\ 
Compute $s_k:=\alpha_k d_k$, $x_{k+1}:=x_k + s_k$, $y_k:=\nabla f(x_{k+1})-\nabla f(x_k)$\\
\uIf{$y_k^T s_k > 0$ \label{line_acceptanceofysforstorage}}{append $(s_k,y_k)$ to storage, redefine $\CI:=\CI\cup\{k\}$ and let $q_k:=q(s_k,y_k)$\label{line_skykappend}\\
	compute $\gamma_{k+1}^\Hy$ and $\gamma_{k+1}^\Hs$ according to Definition~\ref{def_tau_B}}			
\Else{let $\gamma_{k+1}^\Hy:=0$ and $\gamma_{k+1}^\Hs:=\infty$}
\lIf{$\lvert\CI\rvert>m$}{let $n:=\min\,\CI$, remove $(s_n,y_n)$ from storage and redefine $\CI:=\CI\setminus\{n\}$\label{line_skykremove}}
}
\end{algorithm2e}

The interval $[\gamma_k^\Hy,\gamma_k^\Hs]\cap [\omega_k,\omega_k^{-1}]$ in Line~\ref{line_choicetauk} is well-defined, but it may be empty. In that case,  $[\omega_k,\omega_k^{-1}]$ is used. The latter is nonempty due to $c_0\in(0,1]$. 

\subsection{The two main novelties of Algorithm~\ref{alg_hybrid}}

While Algorithm~\ref{alg_hybrid} still stores any pair $(s_k,y_k)$ with $y_k^T s_k>0$, cf. Line~\ref{line_skykappend}, it does not necessarily use all stored pairs for constructing $H_k$. Instead, in every iteration the updating involves only those of the stored pairs $(s_j,y_j)$, $j\in \CI$, that satisfy $q_j\geq\omega_k$, cf. Line~\ref{line_tlr}, where $q_j=q(s_j,y_j)$.  
We recall that the construction of $H_k$ is described in \cref{eq_defLBFGSupd2}. 
Note that the condition $q_j\geq\omega_k$
relates all stored pairs to the current value $\omega_k$, so a pair $(s_j,y_j)$ in the storage may be used to form $H_k$ in some iterations, but may be skipped in others, until it is removed from the storage. In contrast, in \LBFGS~a pair within the storage is consistently used to form $H_k$ until it is removed from the storage. 
The second novelty concerns the choice of $\gamma_k$. In classical \LBFGS~with Wolfe--Powell step sizes 
we have $\gamma_k^\Hy>0$ for all~$k$, so often $\gamma_k=\gamma_k^\Hy$ is chosen for all~$k$. 
By additionally requiring $\gamma_k\in[\omega_k,\omega_k^{-1}]$, Algorithm~\ref{alg_hybrid} 
further restricts the choice of $\gamma_k$. In particular, this may prevent the choice 
$\gamma_k=\gamma_k^\Hy$ for some $k$. 
The key point is that these two modifications allow us to bound 
$\norm{H_k}$ and $\norm{H_k^{-1}}$ in terms of $\omega_k$, cf. \Cref{lem_BkandBkinversebounded},
which is crucial for establishing the global convergence of \ref{alg_hybrid} in Theorems~\ref{thm_globconv} and \ref{thm_clusterpointsarestationar}
(note that $\omega_k$ is related to $\norm{\nabla f(x_k)}$).
We are not aware of other works on cautious updating that have achieved global convergence in the sense of \Cref{thm_globconv} or \ref{thm_clusterpointsarestationar} without the use of fixed bounds for $\norm{H_k}$ and $\norm{H_k^{-1}}$. Such bounds can severely degrade the performance, cf. also the comment at the end of Section~\ref{sec_relcu}. 

\subsection{Relation of Algorithm~\ref{alg_hybrid} to cautious updating}\label{sec_relcu}

We emphasize that the two main novelties of Algorithm~\ref{alg_hybrid} are based on the \emph{cautious updating} introduced by Li and Fukushima in \cite{LiFu01} for the BFGS method. There, the BFGS update based on $(s_k,y_k)$ is applied if $y_k^T s_k/\norm{s_k}^2\geq \epsilon\norm{\nabla f(x_k)}^\alpha$
for positive constants $\epsilon$ and $\alpha$, otherwise the update is skipped; cf. \cite[(2.10)]{LiFu01}.
This condition for skipping the update has been employed many times in the literature, both for BFGS and for \LBFGS. However, 
to the best of our knowledge, in the existing variants once a pair $(s_j,y_j)$ is used for updating, it is also used for updating in every subsequent iteration (until it is removed from the storage in case of \LBFGS). Clearly, this is less flexible than deciding in each iteration which of the stored pairs are used for updating. Moreover, in classical cautious updating the decision whether to store a pair $(s_j,y_j)$ (and thus, to use it for updating) is based on $\norm{\nabla f(x_j)}$,
whereas in Algorithm~\ref{alg_hybrid} the decision to involve a stored pair $(s_j,y_j)$ in the updating is based on the most recent norm $\norm{\nabla f(x_k)}$ rather than older norms. 
Finally, involving $\norm{\nabla f(x_k)}$ in the choice of $H_k^{(0)}$ seems to be new altogether apart from our very recent work \cite{AMM23}.
As it turns out, the novel form of cautious updating used in Algorithm~\ref{alg_hybrid} 
allows to bound $\norm{H_k}$ and $\norm{H_k^{-1}}$ in terms of $\norm{\nabla f(x_k)}^{-1}$, which 
is crucial for establishing global and linear convergence. 
We emphasize that the bounds for $\norm{H_k}$ and $\norm{H_k^{-1}}$ tend to infinity when $\norm{\nabla f(x_k)}$ tends to zero,
cf. \Cref{lem_BkandBkinversebounded}. 
This is highly desirable because it indicates that $\norm{H_k}$ and $\norm{H_k^{-1}}$ can become as large as necessary for $k\to\infty$, whereas any preset lower bound for $\gamma_k$ or $q(s_k,y_k)$ uniformly bounds the spectrum of $H_k$ and 
may therefore limit the ability of $H_k$ to capture the spectrum of the inverse Hessian, which would slow down the convergence.

\subsection{A further novelty of Algorithm~\ref{alg_hybrid}: $\nabla f$ not (globally) Lipschitz}

Another novelty of Algorithm~\ref{alg_hybrid} concerns the assumption of Lipschitz continuity of $\nabla f$ in the level set associated to $x_0$.
Specifically, if $\nabla f$ satisfies this assumption, then $q$ can be replaced by the simpler function 
$\hat q(s,y):=y^T s/\norm{s}^2$ for $s,y\neq 0$, $\hat q(s,y)=0$ else, without meaningfully affecting the convergence results of this paper. 
The decision criterion $\hat q(s_j,y_j)\geq\omega_k$ in Line~\ref{line_tlr} of Algorithm~\ref{alg_hybrid} then resembles the original cautious updating more clearly. 
However, using $q$ instead of $\hat q$ enables global convergence if $\nabla f$ is continuous and linear convergence if $\nabla f$ is Lipschitz continuous near cluster points, cf. \Cref{thm_clusterpointsarestationar} and \Cref{thm_linconv}.
That is, using $q$ instead of $\hat q$ allows us to eliminate, respectively, weaken substantially the assumption of $\nabla f$ being Lipschitz in the entire level set. We are not aware of other works on cautious updating that have achieved this. 

\subsection{A slightly larger interval for the choice of $\gamma_k$}

Without meaningfully affecting the convergence theory of this paper we could 
replace the interval $[\gamma_k^\Hy,\gamma_k^\Hs]$ in Line~\ref{line_choicetauk} of Algorithm~\ref{alg_hybrid} by
a larger interval $[\gamma_k^{\min},\gamma_k^{\max}]$, where
$\gamma_k^{\min}:=\min\{c_3,c_4\gamma_k^\Hy\}$ and $\gamma_k^{\max}:=\max\{C_3,C_4\gamma_k^\Hs\}$ for positive constants $c_3,c_4,C_3,C_4$ with $c_3\leq C_3$.
This allows, in particular, to include the choice $\gamma_k=1$ for all $k$, i.e., $H_k^{(0)}=I$,
that also appears in the literature. However, in our assessment $H_k^{(0)}=\gamma_k I$ with $\gamma_k$ according to \cref{eq_choceofscfct} is used much more often, so to keep the notation light we work with $[\gamma_k^\Hy,\gamma_k^\Hs]$. 
(If $m=0$ and $\gamma_k=1$ for all $k$, \ref{alg_hybrid} is the method of steepest descent.) 


\section{Convergence analysis}\label{sec_convana}

This section is devoted to the convergence properties of Algorithm~\ref{alg_hybrid}. 
We start with its well-definedness in Section~\ref{sec_welldef}, after which 
we focus on the global and local convergence in Sections~\ref{sec_globconv} and \ref{sec_linconv}. 

\subsection{Well-definedness}\label{sec_welldef}

We recall some estimates to infer that Algorithm~\ref{alg_hybrid} is well-defined.

\begin{lemma}[cf. {\cite[Lemma~4.1]{AMM23}}]\label{lem_generalresultonnormbounds}
Let $M\in\N_0$ and $\kappa_1,\kappa_2>0$. For all $j\in\{0,\ldots,M-1\}$ let $(s_j,y_j)\in\CX\times\CX$
satisfy 
\begin{equation*}
	\frac{y_j^T s_j}{\norm{s_j}^2}\geq \frac{1}{\kappa_1}\qquad\text{ and }\qquad
	\frac{y_j^T s_j}{\norm{y_j}^2}\geq \frac{1}{\kappa_2}. 
\end{equation*}
Moreover, let $H^{(0)}\in\LinPDX$. Then its \LBFGS~update $H$, obtained from the recursion \cref{eq_defLBFGSupd2} using $H_k^{(0)}=H^{(0)}$ and $r=M$, satisfies $H\in\LinPDX$ and
\begin{equation}\label{eq_estlargestEV}
	\norm{H^{-1}}\leq\norm{(H^{(0)})^{-1}} + M \kappa_2
\end{equation}
as well as
\begin{equation}\label{eq_estsmallestEV}
	\norm{H}\leq 5^M\max\bigl\{1,\norm{H^{(0)}}\bigr\}\max\bigl\{1,\kappa_1^M,(\kappa_1 \kappa_2)^M\bigr\}.
\end{equation}
\end{lemma}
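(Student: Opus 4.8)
The plan is to prove \cref{eq_estlargestEV} and \cref{eq_estsmallestEV} by induction on $M$, using that the recursions \cref{eq_defLBFGSupd2} and \cref{eq_defLBFGSupd} describe the same family of operators: for $\norm{H^{-1}}$ and for positive definiteness the direct recursion \cref{eq_defLBFGSupd} for $B_k=H_k^{-1}$ is convenient, whereas for $\norm{H}$ the inverse recursion \cref{eq_defLBFGSupd2} is. Beforehand one records a few preliminaries. The two assumed ratios are only meaningful when $s_j\neq 0$ and $y_j\neq 0$, and the first of them forces $y_j^Ts_j\geq\norm{s_j}^2/\kappa_1>0$; hence $\rho_j=(y_j^Ts_j)^{-1}>0$ is well-defined, every step of both recursions makes sense, and by the Sherman--Morrison identity recalled in \Cref{sec_LBFGS} one has $B_k^{(j)}=(H_k^{(j)})^{-1}$ for each $j$, not only for $j=M$. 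The base case $M=0$ is immediate for both estimates.

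For \cref{eq_estlargestEV} and positive definiteness I would run an induction on $j$ using \cref{eq_defLBFGSupd}. The subtracted rank-one term is well-defined since $(B_k^{(j)}s_j)^Ts_j=s_j^TB_k^{(j)}s_j>0$, and it is positive semi-definite, so $B_k^{(j)}+\frac{y_jy_j^T}{y_j^Ts_j}-B_k^{(j+1)}$ is positive semi-definite. Testing against unit vectors and using self-adjointness yields $\norm{B_k^{(j+1)}}\leq\norm{B_k^{(j)}}+\frac{\norm{y_j}^2}{y_j^Ts_j}\leq\norm{B_k^{(j)}}+\kappa_2$ by the second hypothesis, and iterating from $B_k^{(0)}=(H^{(0)})^{-1}$ gives \cref{eq_estlargestEV}. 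That $B_k^{(j)}$ stays positive definite is the classical BFGS invariant: inserting \cref{eq_defLBFGSupd} into $x^TB_k^{(j+1)}x$ and applying the Cauchy--Schwarz inequality for the inner product $(u,v)\mapsto u^TB_k^{(j)}v$ shows $x^TB_k^{(j+1)}x>0$ for every $x\neq 0$ (the curvature term $(y_j^Tx)^2/(y_j^Ts_j)$ covers the degenerate direction $x\parallel s_j$); combining this strict positivity with the boundedness of $B_k^{(j+1)}$ just obtained and with the boundedness of its inverse $H_k^{(j+1)}$ promotes it to positive definiteness, i.e.\ $B_k^{(j+1)}\in\LinPDX$, so in particular $H=H_k^{(M)}=(B_k^{(M)})^{-1}\in\LinPDX$.

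For \cref{eq_estsmallestEV} I would use \cref{eq_defLBFGSupd2}. Submultiplicativity of the operator norm and the triangle inequality give $\norm{H_k^{(j+1)}}\leq\norm{V_j}^2\norm{H_k^{(j)}}+\rho_j\norm{s_j}^2$, where $\rho_j\norm{s_j}^2=\norm{s_j}^2/(y_j^Ts_j)\leq\kappa_1$ by the first hypothesis and $\norm{V_j}\leq 1+\rho_j\norm{y_j}\norm{s_j}=1+\frac{\norm{y_j}\norm{s_j}}{y_j^Ts_j}\leq 1+\sqrt{\kappa_1\kappa_2}$ by Cauchy--Schwarz together with both hypotheses, so $\norm{V_j}^2\leq(1+\sqrt{\kappa_1\kappa_2})^2\leq 4\max\{1,\kappa_1\kappa_2\}$. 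The induction step then multiplies the bound for $\norm{H_k^{(M-1)}}$ by $\norm{V_{M-1}}^2$, adds $\kappa_1$, and uses the elementary inequalities $\max\{1,\kappa_1\kappa_2\}\max\{1,\kappa_1^{M-1},(\kappa_1\kappa_2)^{M-1}\}\leq\max\{1,\kappa_1^M,(\kappa_1\kappa_2)^M\}$ and $\kappa_1\leq 5^{M-1}\max\{1,\kappa_1^M\}$; since $4\cdot 5^{M-1}+1\leq 5^M$, this reproduces \cref{eq_estsmallestEV}.

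I expect the two chains of inequalities to be entirely routine. The one step that needs genuine care is the positive-definiteness claim: in infinite dimensions strict positivity $x^TB_k^{(j+1)}x>0$ for all $x\neq 0$ does not by itself yield the uniform bound $x^TB_k^{(j+1)}x\geq\beta\norm{x}^2$ demanded by the definition of $\LinPDX$, so one must genuinely invoke the boundedness of $B_k^{(j+1)}$ and of $(B_k^{(j+1)})^{-1}=H_k^{(j+1)}$ to rule out spectrum accumulating at $0$. The only other thing to watch is the bookkeeping of the powers of $\kappa_1$ and $\kappa_1\kappa_2$ through the induction, which is what makes the constant land on $5^M$ rather than on a cruder bound of the form $c\,4^M\max\{1,\kappa_1\}\max\{1,(\kappa_1\kappa_2)^M\}$.
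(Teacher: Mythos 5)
The paper does not prove this lemma at all; it imports it verbatim from \cite[Lemma~4.1]{AMM23}, so there is no in-paper argument to compare against. Your proof is correct and self-contained, and it follows the natural route: the Loewner-monotonicity bound $B^{(j+1)}\preceq B^{(j)}+y_jy_j^T/(y_j^Ts_j)$ for \cref{eq_estlargestEV}, and submultiplicativity with $\norm{V_j}^2\leq(1+\sqrt{\kappa_1\kappa_2})^2\leq 4\max\{1,\kappa_1\kappa_2\}$ for \cref{eq_estsmallestEV}, with $4\cdot 5^{M-1}+1\leq 5^M$ closing the induction. The two delicate points are handled properly: in infinite dimensions you correctly upgrade strict positivity of $B^{(j+1)}$ to membership in $\LinPDX$ via boundedness of $B^{(j+1)}$ and of its inverse $H^{(j+1)}$ (the latter being manifestly bounded from \cref{eq_defLBFGSupd2}), and the inequality $\max\{1,\kappa_1\kappa_2\}\max\{1,\kappa_1^{j},(\kappa_1\kappa_2)^{j}\}\leq\max\{1,\kappa_1^{j+1},(\kappa_1\kappa_2)^{j+1}\}$ does hold — the only non-obvious case, $\kappa_1^{j}\cdot\kappa_1\kappa_2$, is a geometric mean of $\kappa_1^{j+1}$ and $(\kappa_1\kappa_2)^{j+1}$ and hence bounded by their maximum; it would be worth spelling that one line out.
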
	

The following assumption ensures that Algorithm~\ref{alg_hybrid} is well-defined.

\begin{assumption}\label{ass_basic}
\phantom{to enforce linebreak}
\begin{enumerate}
	\item[1)] The function $f:\CX\rightarrow\R$ is continuously differentiable and bounded below. 
	\item[2)] The step size $\alpha_k$ in \ref{alg_hybrid} either satisfies the Armijo condition \cref{eq_armijocond}
	and is computed by backtracking for all $k$, or it satisfies the Wolfe--Powell conditions for all $k$, cf. \cref{eq_WolfePowellcond}. 
\end{enumerate}
\end{assumption}

\begin{lemma}\label{lem_welldef}
Let \Cref{ass_basic} hold. Then Algorithm~\ref{alg_hybrid} 
either terminates in Line~\ref{line_term} with an $x_k$ satisfying $\nabla f(x_k)=0$
or it generates a sequence $(f(x_k))$ that is strictly monotonically decreasing and convergent. 
\end{lemma}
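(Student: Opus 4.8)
The plan is to show two things: first, that Algorithm~\ref{alg_hybrid} is well-defined (i.e., each iteration can be carried out), and second, that if it does not terminate in Line~\ref{line_term}, the resulting sequence $(f(x_k))$ is strictly decreasing and convergent. For well-definedness, the only steps that could fail are: (a) the choice of $\gamma_k$ in Line~\ref{line_choicetauk}, (b) the construction of $H_k$ and $d_k$ in Line~\ref{line_tlr}, and (c) the computation of a step size $\alpha_k>0$. For (a), note $\omega_k\in(0,c_0]\subseteq(0,1]$ by definition in Line~\ref{line_defomegak} (and because the algorithm has not terminated, $\norm{\nabla f(x_k)}>0$, so $\omega_k>0$), hence $[\omega_k,\omega_k^{-1}]$ is a nonempty interval and a valid choice of $\gamma_k$ always exists by the fallback clause. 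For (b), the pairs $(s_j,y_j)$ with $j\in\CI$ that are selected in Line~\ref{line_tlr} all satisfy $q_j\geq\omega_k>0$; since $q_j=q(s_j,y_j)>0$ forces $y_j^T s_j/\norm{s_j}^2>0$ and $y_j^T s_j/\norm{y_j}^2>0$, we may invoke \Cref{lem_generalresultonnormbounds} with, e.g., $\kappa_1=\kappa_2=\omega_k^{-1}$ and $H^{(0)}=\gamma_k I\in\LinPDX$ to conclude $H_k\in\LinPDX$; in particular $H_k$ is a well-defined bounded linear operator, so $d_k=-H_k\nabla f(x_k)$ is well-defined, and since $H_k$ is positive definite and $\nabla f(x_k)\neq 0$ we get the descent property $\nabla f(x_k)^T d_k=-\nabla f(x_k)^T H_k\nabla f(x_k)<0$.

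For (c), the existence of a suitable $\alpha_k>0$ is the key point and splits according to \Cref{ass_basic}(2). If Wolfe--Powell step sizes are used, then the classical existence result for the Wolfe--Powell conditions applies: $f$ is continuously differentiable, bounded below, and $d_k$ is a descent direction, so the line function $\varphi(\alpha):=f(x_k+\alpha d_k)$ admits an $\alpha_k>0$ satisfying \cref{eq_armijocond} and \cref{eq_WolfePowellcond} (this is standard; see e.g. \cite[Lemma~3.1]{NW06}, which carries over verbatim to Hilbert space). If instead backtracking on the Armijo condition is used, then since $\nabla f(x_k)^T d_k<0$ and $\varphi$ is differentiable with $\varphi'(0)=\nabla f(x_k)^Td_k$, the Armijo inequality \cref{eq_armijocond} holds for all sufficiently small $\alpha>0$ (as $\sigma\in(0,1)$); since each backtracking trial step lies in $[\beta_1\alpha_{k,i},\beta_2\alpha_{k,i}]$ with $\beta_2<1$, the trial step sizes decrease geometrically to zero, so acceptance occurs after finitely many steps and $\alpha_k>0$. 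In either case $s_k=\alpha_k d_k\neq 0$ (since $\alpha_k>0$ and $d_k\neq 0$), so the iteration completes; the remaining bookkeeping in Lines~\ref{line_acceptanceofysforstorage}--\ref{line_skykremove} is unconditional.

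Finally, strict monotonicity and convergence of $(f(x_k))$: whenever the algorithm reaches iteration $k$ without terminating, we have $\nabla f(x_k)^T d_k<0$ and, by the Armijo condition \cref{eq_armijocond} (which holds in both line-search variants, being part of the Wolfe--Powell conditions), $f(x_{k+1})\leq f(x_k)+\alpha_k\sigma\nabla f(x_k)^T d_k<f(x_k)$. Hence $(f(x_k))$ is strictly decreasing. Since $f$ is bounded below by \Cref{ass_basic}(1), a monotone bounded sequence in $\R$ converges, which gives the claim. The main obstacle here is essentially just the careful verification of well-definedness of $H_k$ via \Cref{lem_generalresultonnormbounds}; everything else is routine line-search analysis, and I would present it compactly.
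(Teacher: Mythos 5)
Your proof is correct and follows essentially the same route as the paper, which merely notes that $[\omega_k,\omega_k^{-1}]\neq\emptyset$ (from $c_0\leq 1$) and then defers the remaining details to the analogous lemma in \cite{AMM23}; your write-up simply makes those deferred steps explicit (positive definiteness of $H_k$ via \Cref{lem_generalresultonnormbounds}, existence of the step size, strict descent from Armijo, and convergence from boundedness below). No gaps.
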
		

\begin{proof}
After observing that $[\omega_k,\omega_k^{-1}]\neq\emptyset$ for all $k$ due to $c_0\leq 1$,
the proof is similar to that of \cite[Lemma~4.5]{AMM23}. 
\end{proof}	

Applying \Cref{lem_generalresultonnormbounds} to Algorithm~\ref{alg_hybrid} yields the following result.

\begin{lemma}\label{lem_BkandBkinversebounded}
Let \Cref{ass_basic} hold and let $(x_k)$ be generated by Algorithm~\ref{alg_hybrid}. 
Then 
\begin{equation*}
	\norm{H_k^{-1}}\leq \left(m+1\right) \omega_k^{-1}
	\qquad\text{ and }\qquad
	\norm{H_k}\leq 5^m\max\Bigl\{1,\omega_k^{-(2m+1)}\Bigr\}
\end{equation*}
are satisfied for all $k\in\N_0$, where $m$ is the constant from Algorithm~\ref{alg_hybrid}.
\end{lemma}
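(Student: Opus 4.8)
The plan is to invoke \Cref{lem_generalresultonnormbounds} with the right choice of parameters and then check that the hypotheses are met by the pairs actually used to build $H_k$ in Line~\ref{line_tlr} of Algorithm~\ref{alg_hybrid}. First I would fix an iteration $k$ and recall that $H_k$ is assembled from the seed matrix $H_k^{(0)}=\gamma_k I$ and exactly those stored pairs $(s_j,y_j)$, $j\in\CI$, with $q_j\geq\omega_k$; call these the \emph{active} pairs and let $M$ be their number, so that $0\le M\le |\CI|\le m$. The key observation is that for an active pair, $q_j=q(s_j,y_j)=\min\{y_j^Ts_j/\norm{s_j}^2,\,y_j^Ts_j/\norm{y_j}^2\}\geq\omega_k$, which gives simultaneously $y_j^Ts_j/\norm{s_j}^2\geq\omega_k$ and $y_j^Ts_j/\norm{y_j}^2\geq\omega_k$. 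Hence the hypotheses of \Cref{lem_generalresultonnormbounds} hold with $\kappa_1=\kappa_2=\omega_k^{-1}$.

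Next I would deal with the seed matrix. Since $\gamma_k$ is chosen in Line~\ref{line_choicetauk} to lie in $[\omega_k,\omega_k^{-1}]$ (whether or not the intersection with $[\gamma_k^\Hy,\gamma_k^\Hs]$ is nonempty, the fallback choice still lands in $[\omega_k,\omega_k^{-1}]$), we have $H_k^{(0)}=\gamma_k I$ with $\gamma_k\in[\omega_k,\omega_k^{-1}]$, so $\norm{H_k^{(0)}}=\gamma_k\leq\omega_k^{-1}$ and $\norm{(H_k^{(0)})^{-1}}=\gamma_k^{-1}\leq\omega_k^{-1}$. Plugging $M\le m$, $\kappa_1=\kappa_2=\omega_k^{-1}$ and these seed bounds into \cref{eq_estlargestEV} gives $\norm{H_k^{-1}}\leq\omega_k^{-1}+m\omega_k^{-1}=(m+1)\omega_k^{-1}$, which is the first claimed bound. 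For the second bound I would feed the same data into \cref{eq_estsmallestEV}: using $\max\{1,\norm{H_k^{(0)}}\}\le\max\{1,\omega_k^{-1}\}$ and $\max\{1,\kappa_1^M,(\kappa_1\kappa_2)^M\}=\max\{1,\omega_k^{-M},\omega_k^{-2M}\}\le\max\{1,\omega_k^{-2m}\}$ (noting $\omega_k^{-1}\geq 1$ is \emph{not} assumed, so one must keep the $\max$ with $1$), together with $5^M\le 5^m$, yields $\norm{H_k}\le 5^m\max\{1,\omega_k^{-1}\}\max\{1,\omega_k^{-2m}\}\le 5^m\max\{1,\omega_k^{-(2m+1)}\}$, which is the second claimed bound. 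One small case to mention explicitly is $\CI=\emptyset$ or $M=0$, where $H_k=H_k^{(0)}=\gamma_k I$ and both bounds follow directly from $\gamma_k\in[\omega_k,\omega_k^{-1}]$; this is also consistent with \Cref{lem_generalresultonnormbounds} read with $M=0$.

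I do not anticipate a serious obstacle here; the lemma is essentially a bookkeeping application of \Cref{lem_generalresultonnormbounds}. The one point that needs a little care is making sure the active pairs genuinely satisfy \emph{both} curvature-type inequalities with the \emph{same} constant $\omega_k^{-1}$ — this is exactly why the algorithm uses $q$ (the minimum of the two ratios) rather than only $\hat q=y^Ts/\norm{s}^2$, and it is worth a sentence in the proof. The other mild subtlety is not accidentally assuming $\omega_k\le 1$ when simplifying the exponents in \cref{eq_estsmallestEV}; although $\omega_k=\min\{c_0,c_1\norm{\nabla f(x_k)}^{c_2}\}\le c_0\le 1$ does in fact hold by construction, the clean statement with the outer $\max\{1,\cdot\}$ does not even require it, so I would keep the $\max$'s throughout and only collapse them at the end.
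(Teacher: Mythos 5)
Your proposal is correct and follows essentially the same route as the paper's proof: both apply \Cref{lem_generalresultonnormbounds} with $\kappa_1=\kappa_2=\omega_k^{-1}$ (justified by $q_j\geq\omega_k$ for the active pairs) and $M\leq m$, together with $\gamma_k\in[\omega_k,\omega_k^{-1}]$ for the seed matrix, and then simplify \cref{eq_estlargestEV} and \cref{eq_estsmallestEV}. Your extra remarks on the $M=0$ case and on keeping the outer $\max\{1,\cdot\}$ are fine but not needed beyond what the paper does.
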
	

\begin{proof}
The acceptance criterion $q(s_j,y_j)\geq\omega_k$ in Line~\ref{line_tlr} of Algorithm~\ref{alg_hybrid} 
implies that $y_j^T s_j/\norm{s_j}^2\geq\omega_k$ and $y_j^T s_j/\norm{y_j}^2\geq\omega_k$ for all $(s_j,y_j)$ that are involved in forming $H_k$.
Moreover, Line~\ref{line_choicetauk} ensures that $\omega_k\leq \gamma_k\leq\omega_k^{-1}$. 
Using \cref{eq_estlargestEV}, respectively, \cref{eq_estsmallestEV} we thus deduce that 
\begin{equation*}
	\norm{H_k^{-1}}\leq\norm{(H_k^{(0)})^{-1}} + m\omega_k^{-1}
	= \norm{I}\gamma_k^{-1} + m\omega_k^{-1}
	\leq \omega_k^{-1} + m\omega_k^{-1}
\end{equation*}
and 
\begin{equation*}
	\norm{H_k}\leq 5^m\max\bigl\{1,\omega_k^{-1}\bigr\}\max\bigl\{1,\omega_k^{-2m}\bigr\}
	\leq 5^m\max\bigl\{1,\omega_k^{-(2m+1)}\bigr\}, 
\end{equation*}
establishing the assertions. 
\end{proof}	

The previous lemma implies useful bounds. 

\begin{corollary}\label{cor_BkandBkinversebounded}
Let \Cref{ass_basic} hold and let $(x_k)$ be generated by Algorithm~\ref{alg_hybrid}. 
Then there are constants $c,C>0$ such that
\begin{equation}\label{eq_boundonlengthofgradientsandsteps}
	c\min\Bigl\{1,\norm{\nabla f(x_k)}^{c_2}\Bigr\}\leq \frac{\norm{d_k}}{\norm{\nabla f(x_k)}}\leq C\max\Bigl\{1,\norm{\nabla f(x_k)}^{-(2m+1)c_2}\Bigr\}
\end{equation}
as well as 
\begin{equation*}
	\frac{\lvert\nabla f(x_k)^T d_k\rvert}{\norm{d_k}} \geq c\norm{\nabla f(x_k)}\min\Bigl\{1,\norm{\nabla f(x_k)}^{2(m+1)c_2}\Bigr\}.
\end{equation*}
are satisfied for all $k\in\N_0$, where $c_2$ and $m$ are the constants from Algorithm~\ref{alg_hybrid}.
\end{corollary}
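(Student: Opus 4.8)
The plan is to derive both inequalities directly from \Cref{lem_BkandBkinversebounded} together with the definition of $\omega_k$ in Line~\ref{line_defomegak}, namely $\omega_k=\min\{c_0,c_1\norm{\nabla f(x_k)}^{c_2}\}$, which immediately gives two-sided control of $\omega_k$ in terms of $\norm{\nabla f(x_k)}$: there are constants depending only on $c_0,c_1,c_2$ such that $\omega_k$ behaves like $\min\{1,\norm{\nabla f(x_k)}^{c_2}\}$ from above and is bounded below by a positive multiple of the same quantity (after possibly shrinking to handle the $c_0$ cap). The only subtlety is bookkeeping with the $\min$/$\max$ and the two regimes $\norm{\nabla f(x_k)}\lessgtr (c_0/c_1)^{1/c_2}$; I would absorb all of this into the generic constants $c,C$ and not track them explicitly.

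First I would establish the upper bound on $\norm{d_k}/\norm{\nabla f(x_k)}$. Since $d_k=-H_k\nabla f(x_k)$, we have $\norm{d_k}\leq\norm{H_k}\,\norm{\nabla f(x_k)}$, so $\norm{d_k}/\norm{\nabla f(x_k)}\leq\norm{H_k}\leq 5^m\max\{1,\omega_k^{-(2m+1)}\}$ by \Cref{lem_BkandBkinversebounded}. Now $\omega_k^{-1}\leq\max\{c_0^{-1},c_1^{-1}\norm{\nabla f(x_k)}^{-c_2}\}$, and raising to the power $2m+1$ and taking the max with $1$ yields a bound of the form $C\max\{1,\norm{\nabla f(x_k)}^{-(2m+1)c_2}\}$ with $C$ depending on $m,c_0,c_1$; this gives the right-hand inequality.

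For the lower bound on $\norm{d_k}/\norm{\nabla f(x_k)}$, I would use $\norm{d_k}=\norm{H_k\nabla f(x_k)}\geq\norm{\nabla f(x_k)}/\norm{H_k^{-1}}$ (valid since $H_k$ is positive definite, hence invertible, by \Cref{lem_BkandBkinversebounded} / \Cref{lem_generalresultonnormbounds}), and then $\norm{H_k^{-1}}\leq(m+1)\omega_k^{-1}$. Hence $\norm{d_k}/\norm{\nabla f(x_k)}\geq\omega_k/(m+1)\geq c\min\{1,\norm{\nabla f(x_k)}^{c_2}\}$, using $\omega_k=\min\{c_0,c_1\norm{\nabla f(x_k)}^{c_2}\}\geq\min\{c_0,c_1\}\min\{1,\norm{\nabla f(x_k)}^{c_2}\}$; this is the left-hand inequality of \cref{eq_boundonlengthofgradientsandsteps}.

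For the final estimate on $\lvert\nabla f(x_k)^T d_k\rvert/\norm{d_k}$, the key point is to bound the Rayleigh-type quotient from below: $\lvert\nabla f(x_k)^T d_k\rvert=\nabla f(x_k)^T H_k\nabla f(x_k)\geq\norm{\nabla f(x_k)}^2/\norm{H_k^{-1}}$ since $H_k$ is positive definite, and $\norm{d_k}\leq\norm{H_k}\,\norm{\nabla f(x_k)}$, so the quotient is at least $\norm{\nabla f(x_k)}/(\norm{H_k}\,\norm{H_k^{-1}})$. Plugging in the bounds from \Cref{lem_BkandBkinversebounded} gives $\norm{H_k}\norm{H_k^{-1}}\leq 5^m(m+1)\max\{1,\omega_k^{-(2m+1)}\}\omega_k^{-1}\leq 5^m(m+1)\max\{1,\omega_k^{-(2m+2)}\}$, and then converting $\omega_k^{-1}$ to $\norm{\nabla f(x_k)}$ as before yields a lower bound of the form $c\norm{\nabla f(x_k)}\min\{1,\norm{\nabla f(x_k)}^{2(m+1)c_2}\}$. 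The main (and only mildly tedious) obstacle is the careful handling of the $\max$/$\min$ expressions and the $c_0$-cap when converting powers of $\omega_k$ into powers of $\norm{\nabla f(x_k)}$; everything else is a one-line consequence of $d_k=-H_k\nabla f(x_k)$ and positive definiteness of $H_k$.
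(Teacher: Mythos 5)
Your proposal is correct and follows essentially the same route as the paper: both estimates in \cref{eq_boundonlengthofgradientsandsteps} are read off from the bounds on $\norm{H_k}$ and $\norm{H_k^{-1}}$ in \Cref{lem_BkandBkinversebounded} together with the definition of $\omega_k$, and the final inequality is the same Rayleigh-quotient argument (the paper writes $-\nabla f(x_k)^T d_k = d_k^T H_k^{-1}d_k \geq \norm{d_k}^2/\norm{H_k}$ and then invokes the left inequality of \cref{eq_boundonlengthofgradientsandsteps}, which is algebraically the same $\norm{\nabla f(x_k)}/(\norm{H_k}\,\norm{H_k^{-1}})$ bound you obtain).
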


\begin{proof}
Since $d_k=-H_k\nabla f(x_k)$, the estimates \cref{eq_boundonlengthofgradientsandsteps} readily follow from those for $\norm{H_k}$ and $\norm{H_k^{-1}}$.
Concerning the final inequality we have
\begin{equation*}
	\begin{split}
		-\nabla f(x_k)^T d_k = d_k^T H_k^{-1} d_k 
		&\geq \norm{d_k}^2\norm{H_k}^{-1}\\
		&\geq c\norm{d_k}\norm{\nabla f(x_k)}\min\Bigl\{1,\norm{\nabla f(x_k)}^{(2m+2)c_2}\Bigr\},
	\end{split}
\end{equation*}
where we used the first inequality of \cref{eq_boundonlengthofgradientsandsteps} and the estimate for $\norm{H_k}$. 
\end{proof}	

\subsection{Global convergence}\label{sec_globconv}

In this section we establish the global convergence of Algorithm~\ref{alg_hybrid}. In fact, we prove several types of global convergence under different assumptions.

The level set for Algorithm~\ref{alg_hybrid}, respectively, the level set with $\delta$ vicinity are given by
\begin{equation*}
\Omega:=\Bigl\{x\in\CX: \; f(x)\leq f(x_0)\Bigr\}\qquad\text{ and }\qquad \Omega_\delta:=\Omega+\Ballop{\delta}(0).
\end{equation*}
The first result relies on the following assumption.

\begin{assumption}\label{ass_globconv}	
\phantom{to create linebreak}
\begin{enumerate}
	\item[1)] \Cref{ass_basic} holds.
	\item[2)] The gradient of $f$ is uniformly continuous in $\Omega$, i.e., for all sequences\linebreak $(x_k)\subset\Omega$ and $(s_k)\subset\CX$ satisfying $(x_k+s_k)\subset\Omega$ and $\lim_k s_k=0$, there holds $\lim_k\,\norm{\nabla f(x_k+s_k)-\nabla f(x_k)}=0$. 
	\item[3)] If Algorithm~\ref{alg_hybrid} uses Armijo with backtracking, then 
	there is $\delta>0$ such that $f$ or $\nabla f$ is uniformly continuous in $\Omega_\delta$.	
\end{enumerate}		
\end{assumption}

\begin{remark}
If $\CX$ is finite dimensional and $\Omega$ is bounded, part~2) can be dropped since it follows from the continuity of $\nabla f$;
similarly for part~3). 
\end{remark}

To prove global convergence for step sizes that satisfy the \WP~conditions, 
we will use the following less stringent form of the \emph{Zoutendijk condition} from \cite{A22}. 
In contrast to the original Zoutendijk condition, cf. e.g., \cite[Theorem~3.2]{NW06}, it holds without Lipschitz continuity of the gradient.

\begin{lemma}{(cf. \cite[Theorem~2.28]{A22})}\label{lem_fundamentalstepsizelemma}
Let \Cref{ass_globconv} hold and let $(x_k)$ be generated by Algorithm~\ref{alg_hybrid} with the Wolfe--Powell conditions.
Then $\lim_{k\to\infty}\frac{\lvert\nabla f(x_k)^T d_k\rvert}{\norm{d_k}} = 0$.
\end{lemma}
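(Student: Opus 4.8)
The plan is to adapt the classical Zoutendijk argument to the weaker setting where $\nabla f$ is only uniformly continuous on $\Omega$, replacing the Lipschitz-based estimate by a uniform-continuity contradiction argument. First I would invoke \Cref{lem_welldef}: under \Cref{ass_basic} the sequence $(f(x_k))$ is strictly decreasing and convergent, hence $f(x_{k+1})-f(x_k)\to 0$. Combining this with the Armijo condition \cref{eq_armijocond}, namely $f(x_{k+1})\le f(x_k)+\alpha_k\sigma\nabla f(x_k)^T d_k$, gives $\alpha_k\lvert\nabla f(x_k)^T d_k\rvert\to 0$, so that in particular $\alpha_k\nabla f(x_k)^Td_k\to 0$. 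I also note that all iterates stay in $\Omega$ and the steps satisfy $s_k=\alpha_k d_k$; since $f(x_{k+1})-f(x_k)\le\alpha_k\sigma\nabla f(x_k)^Td_k\le 0$, the quantity $\alpha_k\norm{d_k}\cdot\tfrac{\lvert\nabla f(x_k)^Td_k\rvert}{\norm{d_k}}\to 0$.

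Next I would argue by contradiction: suppose $\tfrac{\lvert\nabla f(x_k)^T d_k\rvert}{\norm{d_k}}$ does not tend to $0$, so along some subsequence (not relabeled) it is bounded below by a constant $\varepsilon>0$. Then along this subsequence $\alpha_k\norm{d_k}\to 0$, i.e. $\norm{s_k}\to 0$ along the subsequence. Now I would use the curvature condition in \cref{eq_WolfePowellcond}, $\nabla f(x_{k+1})^Td_k\ge\eta\nabla f(x_k)^Td_k$, which rearranges to $\bigl(\nabla f(x_{k+1})-\nabla f(x_k)\bigr)^Td_k\ge(\eta-1)\nabla f(x_k)^Td_k=(1-\eta)\lvert\nabla f(x_k)^Td_k\rvert$ (using that $d_k$ is a descent direction, which follows from $d_k=-H_k\nabla f(x_k)$ with $H_k\in\LinPDX$ by \Cref{lem_BkandBkinversebounded}). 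Dividing by $\norm{d_k}$ yields $\norm{\nabla f(x_{k+1})-\nabla f(x_k)}\ge(1-\eta)\tfrac{\lvert\nabla f(x_k)^Td_k\rvert}{\norm{d_k}}\ge(1-\eta)\varepsilon$ along the subsequence. But $x_{k+1}=x_k+s_k$ with $x_k,x_{k+1}\in\Omega$ and $\norm{s_k}\to 0$ along this subsequence, so part~2) of \Cref{ass_globconv} (uniform continuity of $\nabla f$ on $\Omega$) forces $\norm{\nabla f(x_{k+1})-\nabla f(x_k)}\to 0$, a contradiction. This proves the claim.

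The main obstacle is ensuring that the step sizes $\alpha_k$ produced by the \WP~line search are such that $x_k,x_{k+1}\in\Omega$ simultaneously — this is immediate here since the Armijo condition guarantees $f(x_{k+1})\le f(x_k)\le f(x_0)$, so both endpoints lie in $\Omega$ and the uniform-continuity hypothesis on $\Omega$ (rather than on a thickened set) suffices; one must be careful that $\Omega$ itself, not $\Omega_\delta$, is the relevant domain for the \WP~case, which is exactly why part~3) of \Cref{ass_globconv} is only needed for Armijo with backtracking. A secondary point to get right is the direction of the inequalities when passing from $\nabla f(x_k)^Td_k$ (negative) to its absolute value in the curvature estimate; writing $\nabla f(x_k)^Td_k=-\lvert\nabla f(x_k)^Td_k\rvert$ throughout keeps the signs transparent. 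The convexity/descent property $\nabla f(x_k)^Td_k<0$ whenever $\nabla f(x_k)\neq 0$ is needed and is supplied by positive definiteness of $H_k$, so no additional assumption is required. This argument is essentially \cite[Theorem~2.28]{A22} specialized to the present descent method, so I would simply cite it and give the short reduction above.
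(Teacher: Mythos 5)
Your argument is correct. The paper gives no proof of this lemma at all — it is imported verbatim as a citation of \cite[Theorem~2.28]{A22} — so your self-contained reconstruction (Armijo plus telescoping to get $\alpha_k\lvert\nabla f(x_k)^Td_k\rvert\to 0$, then the curvature condition and Cauchy--Schwarz to bound $\norm{\nabla f(x_{k+1})-\nabla f(x_k)}$ from below along a hypothetical bad subsequence, contradicted by uniform continuity of $\nabla f$ on $\Omega$ once $\norm{s_k}\to 0$) is exactly the standard Lipschitz-free Zoutendijk argument that the cited theorem formalizes, and every step checks out: the sign handling via $\nabla f(x_k)^Td_k=-\lvert\nabla f(x_k)^Td_k\rvert$, the fact that both $x_k$ and $x_{k+1}=x_k+s_k$ lie in $\Omega$ by monotonicity of $(f(x_k))$, and the observation that part~3) of \Cref{ass_globconv} is only needed in the Armijo/backtracking case. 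The one cosmetic slip is the attribution of positive definiteness of $H_k$ to \Cref{lem_BkandBkinversebounded}; the property $H_k\in\LinPDX$ (hence $\nabla f(x_k)^Td_k<0$ for $\nabla f(x_k)\neq 0$) is actually supplied by \Cref{lem_generalresultonnormbounds}, while \Cref{lem_BkandBkinversebounded} only records the resulting norm bounds. This does not affect the validity of the proof.
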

		
\begin{remark}\label{rem_sWPwithoutuniformcontinuity}
By following the proof of \cite[Theorem~2.28]{A22} it is not difficult to show 
that if we replace \Cref{ass_globconv} by \Cref{ass_basic} (i.e., drop the uniform continuity of $\nabla f$)
and additionally assume that $(x_k)_K$ converges, then 
$\lim_{K\ni k\to\infty}\frac{\lvert\nabla f(x_k)^T d_k\rvert}{\norm{d_k}} = 0$.
\end{remark}

As a main result we establish that Algorithm~\ref{alg_hybrid} is globally convergent under \Cref{ass_globconv}. 

\begin{theorem}\label{thm_globconv}
Let \Cref{ass_globconv} hold. Then
Algorithm~\ref{alg_hybrid} either terminates after finitely many iterations with an $x_k$ that satisfies $\nabla f(x_k)=0$ or it generates a sequence $(x_k)$ with
\begin{equation}\label{eq_liminfiszero}
	\lim_{k\to\infty}\,\norm{\nabla f(x_k)} = 0.
\end{equation}
In particular, every cluster point of $(x_k)$ is stationary.
\end{theorem}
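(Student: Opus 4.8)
The plan is to combine the descent structure of the method with the norm bounds from \Cref{cor_BkandBkinversebounded} to force $\norm{\nabla f(x_k)}\to 0$, treating the Armijo and \WP~cases separately. First, if the algorithm does not terminate finitely, then by \Cref{lem_welldef} the sequence $(f(x_k))$ is strictly decreasing and convergent, so $f(x_k)-f(x_{k+1})\to 0$. Suppose, for contradiction, that \cref{eq_liminfiszero} fails. Then there is $\varepsilon>0$ and an infinite index set $K$ with $\norm{\nabla f(x_k)}\geq\varepsilon$ for all $k\in K$. On $K$, the lower bound in \Cref{cor_BkandBkinversebounded} gives a uniform constant $\theta>0$ with $\lvert\nabla f(x_k)^T d_k\rvert/\norm{d_k}\geq\theta$ for $k\in K$.

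For the \WP~case this is immediate: \Cref{lem_fundamentalstepsizelemma} asserts $\lvert\nabla f(x_k)^T d_k\rvert/\norm{d_k}\to 0$ over all $k$, contradicting the lower bound $\theta$ on $K$. Hence $\norm{\nabla f(x_k)}\to 0$. For the Armijo-with-backtracking case I would argue directly. The Armijo condition \cref{eq_armijocond} and convergence of $(f(x_k))$ give $\alpha_k\lvert\nabla f(x_k)^T d_k\rvert\to 0$, equivalently $\norm{s_k}\cdot\bigl(\lvert\nabla f(x_k)^T d_k\rvert/\norm{d_k}\bigr)\to 0$; on $K$ the second factor is bounded below by $\theta$, so $\norm{s_k}\to 0$ along $K$, and hence $\alpha_k\to 0$ along $K$ (using the lower bound on $\norm{d_k}/\norm{\nabla f(x_k)}$ from \cref{eq_boundonlengthofgradientsandsteps} together with $\norm{\nabla f(x_k)}\geq\varepsilon$). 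Since $\alpha_{k,0}=1$ and backtracking scales by a factor at most $\beta_2<1$, for large $k\in K$ the previous trial step $\tilde\alpha_k:=\alpha_k/\beta$ (for some $\beta\in[\beta_1,\beta_2]$) violates \cref{eq_armijocond}, i.e.
\begin{equation*}
	f(x_k+\tilde\alpha_k d_k) > f(x_k) + \tilde\alpha_k\sigma\nabla f(x_k)^T d_k .
\end{equation*}
Applying the mean value theorem to the left side, there is $\xi_k$ on the segment $[x_k,x_k+\tilde\alpha_k d_k]$ with $\nabla f(\xi_k)^T(\tilde\alpha_k d_k) > \tilde\alpha_k\sigma\nabla f(x_k)^T d_k$, whence $\bigl(\nabla f(\xi_k)-\nabla f(x_k)\bigr)^T(d_k/\norm{d_k}) > (\sigma-1)\,\nabla f(x_k)^T d_k/\norm{d_k} \geq (1-\sigma)\theta > 0$ for $k\in K$. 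But $\xi_k-x_k=\tilde\alpha_k s_k/\alpha_k\cdot(\text{scalar})$ has norm $O(\norm{s_k}/\beta_1)\to 0$ along $K$, and $x_k,\xi_k\in\Omega_\delta$ for large $k$, so uniform continuity of $\nabla f$ on $\Omega_\delta$ (part~3 of \Cref{ass_globconv}, in the $\nabla f$ variant) forces the left side to $0$ — a contradiction. If instead $f$ is uniformly continuous on $\Omega_\delta$, one reaches a contradiction analogously from the Armijo inequality rearranged as $f(x_k+\tilde\alpha_k d_k)-f(x_k) > \sigma\tilde\alpha_k\nabla f(x_k)^T d_k$ divided by $\tilde\alpha_k\norm{d_k}$, comparing with a mean-value expansion and using $\tilde\alpha_k\norm{d_k}\to 0$ along $K$.

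This proves \cref{eq_liminfiszero}. The final sentence is then immediate: if $x_{k_j}\to\bar x$ for a subsequence, continuity of $\nabla f$ and \cref{eq_liminfiszero} give $\nabla f(\bar x)=\lim_j\nabla f(x_{k_j})=0$. I expect the main obstacle to be the Armijo case: one must carefully handle the backtracking bookkeeping (the last rejected trial step and the factor in $[\beta_1,\beta_2]$), verify that the relevant points lie in $\Omega_\delta$ so that part~3 of \Cref{ass_globconv} applies, and make the mean-value/uniform-continuity argument work uniformly along $K$ with the $\omega_k$-dependent bounds — all of which is routine given \Cref{cor_BkandBkinversebounded} but requires care. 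The \WP~case is essentially a one-line consequence of \Cref{lem_fundamentalstepsizelemma} and \Cref{cor_BkandBkinversebounded}.
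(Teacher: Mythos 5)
Your proposal is correct and follows essentially the same route as the paper: contradiction via a subsequence $K$ with $\norm{\nabla f(x_k)}\geq\epsilon$, the Armijo decrease combined with the $\omega_k$-dependent bounds of \Cref{lem_BkandBkinversebounded} and \Cref{cor_BkandBkinversebounded} to force $s_k,\alpha_k\to 0$ along $K$, then \Cref{lem_fundamentalstepsizelemma} for the \WP~case and the rejected-trial-step/mean-value argument for the Armijo case. The only underspecified point is the sub-case where only $f$ (not $\nabla f$) is uniformly continuous on $\Omega_\delta$: the paper there uses the uniform continuity of $f$ to conclude that the mean-value point $\xi_k$ eventually lies in $\Omega$ itself (since $f(\xi_k)-f(x_k)\to 0$ and $f(x_k)<f(x_0)$), so that part~2) of \Cref{ass_globconv} applies to the pair $(x_k,\xi_k)$ — your ``analogously'' needs exactly this step, since uniform continuity of $f$ alone does not contradict the Armijo violation.
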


\begin{proof}
The case that Algorithm~\ref{alg_hybrid} terminates is clear,
so let us assume that it generates a sequence $(x_k)$.
If $(x_k)$ satisfies \cref{eq_liminfiszero}, then by continuity it follows that every cluster point $\xopt$ of $(x_k)$ satisfies $\nabla f(\xopt)=0$. Hence, it only remains to establish \cref{eq_liminfiszero}. 
We argue by contradiction, so suppose that \cref{eq_liminfiszero} were false. Then there is a subsequence $(x_k)_K$ of $(x_k)$ and an
$\epsilon>0$ such that 
\begin{equation}\label{eq_contrassgradawayfromzero}
	\norm{\nabla f(x_k)}\geq\epsilon \qquad\forall k\in K.
\end{equation}
Since $\alpha_k$ satisfies the Armijo condition for all $k\geq 0$, we infer from $-\nabla f(x_k)^T d_k = d_k^T H_k^{-1} d_k$ that
\begin{equation*}
	\begin{split}
		\sigma\sum_{k\in K}\alpha_k\frac{\norm{d_k}^2}{\norm{H_k}}
		&\leq -\sigma\sum_{k\in K}\alpha_k\nabla f(x_k)^T d_k\\
		&\leq \sum_{k\in K} \left[f(x_k)-f(x_{k+1})\right] 
		\leq \sum_{k=0}^\infty \left[f(x_k)-f(x_{k+1})\right] 
		<\infty,
	\end{split}
\end{equation*}
where we used that $(f(x_k))$ is monotonically decreasing by \Cref{lem_welldef}. 
From \Cref{lem_BkandBkinversebounded} and \cref{eq_contrassgradawayfromzero} we obtain 
$\sup_{k\in K}\norm{H_k}<\infty$, which yields that the sum $\sum_{k\in K}\alpha_k\norm{d_k}^2$ is finite.
From the first inequality in \cref{eq_boundonlengthofgradientsandsteps} and \cref{eq_contrassgradawayfromzero} we infer that 
\begin{equation}\label{eq_dkboundedawayfromzero}
	\exists c>0: \quad \norm{d_k}\geq c \quad\forall k\in K.
\end{equation}
Hence, $\sum_{k\in K}\alpha_k\norm{d_k}^2<\infty$ implies 
$\sum_{k\in K}\alpha_k\norm{d_k}<\infty$ and $\sum_{k\in K}\alpha_k<\infty$, 
thus
\begin{equation}\label{eq_yal}
	\lim_{K\ni k\to\infty}\alpha_k = \lim_{K\ni k\to\infty}s_k = 0.
\end{equation}
We now distinguish two cases depending on how the step sizes are determined.\\
\textbf{Case 1: The step sizes are computed by Armijo with backtracking}\\		
From \cref{eq_yal} and the construction of the backtracking we infer that for all sufficiently large $k\in K$ there is $\beta_k\in (0,\beta_1^{-1}]$
such that \cref{eq_armijocond} is violated for $\hat\alpha_k:=\beta_k\alpha_k$. Therefore, 		
\begin{equation*}
	-\hat\alpha_k\sigma\nabla f(x_k)^T d_k > f(x_k)-f(x_k+\hat\alpha_k d_k)
	= -\hat\alpha_k\nabla f(x_k+\theta_k\hat\alpha_kd_k)^T d_k
\end{equation*}
for all these $k$ and $\theta_k\in(0,1)$. Multiplying by $-\hat\alpha_k^{-1}$ and subtracting $\nabla f(x_k)^T d_k$ we obtain 
\begin{equation*}
	(\sigma-1)\nabla f(x_k)^T d_k < \left[\nabla f(x_k+\theta_k\hat\alpha_kd_k) - \nabla f(x_k)\right]^T d_k.
\end{equation*}
Due to $\sigma<1$ and $-\nabla f(x_k)^T d_k = d_k^T H_k^{-1} d_k$ there holds for all $k\in K$ sufficiently large 
\begin{equation*}
	(1-\sigma)\frac{\norm{d_k}^2}{\norm{H_k}} < 
	\norm{\nabla f(x_k+\theta_k\hat\alpha_kd_k)-\nabla f(x_k)}\norm{d_k}.
\end{equation*}
Because of $h:=\sup_{k\in K}\norm{H_k}<\infty$ this entails for all large $k\in K$ that
\begin{equation}\label{eq_qssi}
	(1-\sigma)\norm{d_k}< h\norm{\nabla f(x_k+\theta_k\hat\alpha_kd_k)-\nabla f(x_k)}.
\end{equation}
As $\theta_k\in(0,1)$ and $\beta_k\leq\beta_1^{-1}$ for all $k\in K$, it follows from \cref{eq_yal} that $\theta_k\hat\alpha_k d_k=\theta_k\beta_k s_k\to 0$ for $K\ni k\to\infty$. 
If $\nabla f$ is uniformly continuous in $\Omega_\delta$, then \cref{eq_qssi}
implies $\lim_{K\ni k\to\infty}\norm{d_k}=0$, which contradicts \cref{eq_dkboundedawayfromzero}.
If $f$ is uniformly continuous in $\Omega_\delta$, then 
$x_k+\theta_k\hat\alpha_k\norm{d_k}\in\Omega$ for all large $k\in K$,
so the uniform continuity of $\nabla f$ in $\Omega$ yields $\lim_{K\ni k\to\infty}\norm{d_k}=0$ in \cref{eq_qssi}, contradicting \cref{eq_dkboundedawayfromzero}.\\		
\textbf{Case 2: The step sizes satisfy the Wolfe--Powell conditions}\\
Combining \Cref{lem_fundamentalstepsizelemma} and \Cref{cor_BkandBkinversebounded} yields 
$\lim_{K\ni k\to\infty}\norm{\nabla f(x_k)}=0$, contradicting \cref{eq_contrassgradawayfromzero}.
\end{proof}	

Cluster points are already stationary under \Cref{ass_basic}.

\begin{theorem}\label{thm_clusterpointsarestationar}
	Let \Cref{ass_basic} hold and let $(x_k)$ be generated by Algorithm~\ref{alg_hybrid}. Then every cluster point of $(x_k)$ is stationary.
\end{theorem}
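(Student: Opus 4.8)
The plan is to argue by contradiction: suppose $\xopt$ is a cluster point of $(x_k)$ that is not stationary, i.e. $\norm{\nabla f(\xopt)}=:2\epsilon>0$, and let $K$ be an index set with $(x_k)_K\to\xopt$. By continuity of $\nabla f$ we may pass to a tail of $K$ so that $\norm{\nabla f(x_k)}\geq\epsilon$ for all $k\in K$, and consequently $\omega_k\geq\min\{c_0,c_1\epsilon^{c_2}\}=:\omega_\ast>0$ for those $k$. Using \Cref{lem_welldef}, $(f(x_k))$ is monotone and convergent, hence telescoping and the Armijo condition give $\sum_{k}\alpha_k(-\nabla f(x_k)^T d_k)<\infty$; since $-\nabla f(x_k)^T d_k=d_k^T H_k^{-1}d_k\geq\norm{d_k}^2/\norm{H_k}$, and $\norm{H_k}$ is bounded on $K$ by \Cref{lem_BkandBkinversebounded} together with $\omega_k\geq\omega_\ast$, we get $\sum_{k\in K}\alpha_k\norm{d_k}^2<\infty$. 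Combining this with the lower bound $\norm{d_k}\geq c\min\{1,\norm{\nabla f(x_k)}^{c_2}\}\norm{\nabla f(x_k)}\geq c'>0$ on $K$ from \Cref{cor_BkandBkinversebounded}, we conclude $\alpha_k\to 0$ and $s_k\to 0$ along $K$, exactly as in the proof of \Cref{thm_globconv}.

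The point where this theorem genuinely differs from \Cref{thm_globconv} is that we no longer have uniform continuity of $\nabla f$ on a level set; we only have continuity of $\nabla f$ at the single point $\xopt$. The trick is that $x_k\to\xopt$ along $K$ and $s_k\to 0$ along $K$ together force the relevant difference quotients to be evaluated at points converging to $\xopt$, so plain continuity at $\xopt$ suffices. Concretely, in the Armijo/backtracking case one repeats the mean-value argument from \Cref{thm_globconv}: for large $k\in K$ there is $\hat\alpha_k=\beta_k\alpha_k$ with $\beta_k\le\beta_1^{-1}$ for which \cref{eq_armijocond} fails, yielding
\begin{equation*}
(1-\sigma)\norm{d_k} < h\,\norm{\nabla f(x_k+\theta_k\hat\alpha_k d_k)-\nabla f(x_k)},\qquad \theta_k\in(0,1),
\end{equation*}
with $h:=\sup_{k\in K}\norm{H_k}<\infty$. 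Now both $x_k\to\xopt$ and $x_k+\theta_k\hat\alpha_k d_k=x_k+\theta_k\beta_k s_k\to\xopt$ along $K$ (using $\beta_k\le\beta_1^{-1}$ and $s_k\to 0$), so by continuity of $\nabla f$ \emph{at $\xopt$} the right-hand side tends to $0$, forcing $\norm{d_k}\to 0$ on $K$, contradicting $\norm{d_k}\geq c'>0$. In the \WP~case one invokes \Cref{rem_sWPwithoutuniformcontinuity}, which precisely removes the uniform-continuity hypothesis at the cost of assuming that a subsequence converges — and here $(x_k)_K$ does converge, to $\xopt$ — to obtain $\lim_{K\ni k\to\infty}\lvert\nabla f(x_k)^T d_k\rvert/\norm{d_k}=0$; combined with the last inequality of \Cref{cor_BkandBkinversebounded} and $\omega_k\ge\omega_\ast$ this gives $\norm{\nabla f(x_k)}\to 0$ along $K$, contradicting $\norm{\nabla f(x_k)}\ge\epsilon$.

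The main obstacle is a bookkeeping one rather than a conceptual one: making sure every quantity that needs to stay bounded or bounded away from zero along $K$ (namely $\norm{H_k}$, $\norm{d_k}$, and $\omega_k$) is controlled using \emph{only} the lower bound $\norm{\nabla f(x_k)}\ge\epsilon$ on $K$, without appealing to any global level-set hypothesis, and checking that in the backtracking case the interpolated trial points $x_k+\theta_k\hat\alpha_k d_k$ really do converge to $\xopt$ (this is where $\beta_k\le\beta_1^{-1}$ and $\norm{d_k}$ being merely bounded — which follows from $\omega_k\ge\omega_\ast$ via \Cref{cor_BkandBkinversebounded} — is used). Once these local bounds are in place, the two cases close by the same arithmetic as in \Cref{thm_globconv}, with "uniform continuity on $\Omega_\delta$'' replaced throughout by "continuity at $\xopt$''.
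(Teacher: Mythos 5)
Your proposal is correct and follows essentially the same route as the paper, which itself only sketches this proof as "almost identical to that of Theorem~\ref{thm_globconv}" with uniform continuity replaced by continuity at the cluster point and \Cref{rem_sWPwithoutuniformcontinuity} invoked in the \WP~case. You have filled in exactly the details the paper intends: the lower gradient bound along $K$ controls $\omega_k$, $\norm{H_k}$ and $\norm{d_k}$, the telescoped Armijo sum forces $s_k\to 0$ along $K$, and continuity of $\nabla f$ at $\xopt$ (resp.\ the remark) closes the two cases.
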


\begin{proof}
The proof is almost identical to that of \Cref{thm_globconv}. The main difference is that since the subsequence $(x_k)_K$ now converges to some $\xopt$,
continuity of $f$ at $\xopt$ implies that there is $\delta'>0$ such that $\Ballop{\delta'}(\xopt)\subset\Omega$ and for all sufficiently large $k\in K$ there holds $x_k\in\Ballop{\delta'}(\xopt)$. 
Therefore, local assumptions in the vicinity of the cluster point suffice (e.g., continuity instead of uniform continuity). 
In Case~2 of the proof, in particular, we use \Cref{rem_sWPwithoutuniformcontinuity} and \cref{eq_yal} instead of \Cref{lem_fundamentalstepsizelemma}.
\end{proof}

\begin{remark}
\Cref{thm_clusterpointsarestationar} shows global convergence using only continuity of the gradient and no boundedness of the level set.
This result can be extended to essentially any descent method, so it may be of interest beyond this work. 
Surprisingly, we have not found it elsewhere for the \WP~conditions. 
\end{remark}	

If $\CX$ is finite dimensional and $(x_k)$ is bounded, we obtain the conclusions of \Cref{thm_globconv} under the weaker assumptions of \Cref{thm_clusterpointsarestationar}.

\begin{corollary}\label{cor_unifcontfindim}
Let \Cref{ass_basic} hold and let $(x_k)$ be generated by Algorithm~\ref{alg_hybrid}. 
Let $(x_k)$ be bounded and let $\CX$ be finite dimensional. 
Then $(x_k)$ has at least one cluster point and $\lim_{k\to\infty}\norm{\nabla f(x_k)}=0$. 
\end{corollary}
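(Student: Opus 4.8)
The plan is to reduce \Cref{cor_unifcontfindim} to \Cref{thm_globconv} by verifying its hypotheses, chiefly \Cref{ass_globconv}, parts~2) and~3). If \ref{alg_hybrid} terminates after finitely many iterations, there is nothing to prove, so assume it generates an infinite sequence $(x_k)$. Since $(x_k)$ is bounded and $\CX$ is finite dimensional, the Bolzano--Weierstrass theorem yields at least one cluster point, settling the first claim. For the gradient limit, the idea is that on a finite dimensional space a continuous function is uniformly continuous on every compact (in particular, closed and bounded) set, so the uniform continuity conditions in \Cref{ass_globconv} are automatic once we restrict to a suitable bounded region.

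First I would identify such a region. By \Cref{ass_basic} and \Cref{lem_welldef}, $(f(x_k))$ is decreasing, hence $(x_k)\subset\Omega$; moreover $(x_k)$ is bounded by assumption. However, $\Omega$ itself need not be bounded, so I cannot invoke compactness of $\Omega$ directly. Instead, let $R>0$ be such that $\norm{x_k}\leq R$ for all $k$ and set $K_\delta:=\overline{\Ballop{R+\delta}(0)}$ for a fixed $\delta>0$; this is a compact subset of $\CX$. Then $(x_k)\subset K_\delta$, and $(x_k+s_k)=(x_{k+1})\subset K_\delta$ as well. Since $\nabla f$ is continuous (by \Cref{ass_basic}), it is uniformly continuous on the compact set $K_\delta$; consequently, for any sequences $(u_k),(v_k)\subset K_\delta$ with $u_k-v_k\to 0$ we have $\norm{\nabla f(u_k)-\nabla f(v_k)}\to 0$. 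In particular this holds for $u_k=x_k+s_k$, $v_k=x_k$, which establishes part~2) of \Cref{ass_globconv} (with $\Omega$ replaced by the relevant portion of it, namely $(x_k)$).

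Next I would handle part~3): if \ref{alg_hybrid} uses Armijo with backtracking, I need $\delta>0$ such that $f$ or $\nabla f$ is uniformly continuous in $\Omega_\delta$. The subtlety is that $\Omega_\delta$ is generally unbounded, but inspection of the proof of \Cref{thm_globconv} (Case~1) shows that the uniform continuity is only ever applied along the sequences $x_k$ and $x_k+\theta_k\hat\alpha_k d_k$ for $k\in K$, and by \cref{eq_yal} the perturbation $\theta_k\hat\alpha_k d_k=\theta_k\beta_k s_k\to 0$; hence for all large $k\in K$ these points all lie in $K_\delta$ for a fixed $\delta$. So the same compactness argument gives uniform continuity of $\nabla f$ along exactly the sequences where it is used. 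The cleanest way to package this is to note that, after this observation, the proof of \Cref{thm_globconv} goes through verbatim, or to state and apply the localized version already foreshadowed in \Cref{thm_clusterpointsarestationar}. I would then conclude $\lim_{k\to\infty}\norm{\nabla f(x_k)}=0$ directly from (the proof of) \Cref{thm_globconv}.

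The main obstacle is the mismatch between the \emph{global} uniform-continuity hypotheses in \Cref{ass_globconv} (stated on $\Omega$ and $\Omega_\delta$, which may be unbounded) and the merely \emph{local}/bounded information available here. The resolution is the standard one — continuous functions are uniformly continuous on compact sets, and the arguments in \Cref{thm_globconv} only probe $\nabla f$ on a bounded set determined by $(x_k)$ and vanishing perturbations — but it requires either re-running the contradiction argument of \Cref{thm_globconv} while replacing $\Omega,\Omega_\delta$ by a compact ball, or citing the localized variant noted in \Cref{rem_sWPwithoutuniformcontinuity} and in the proof of \Cref{thm_clusterpointsarestationar}. Everything else — existence of a cluster point, monotonicity of $(f(x_k))$, the bounds on $\norm{H_k}$ and $\norm{H_k^{-1}}$ — is immediate from the results already established.
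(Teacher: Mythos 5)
Your argument is correct, but it takes a longer route than the paper. The paper's proof is a one-liner that reduces everything to \Cref{thm_clusterpointsarestationar}, which holds under \Cref{ass_basic} alone: since $(x_k)$ is bounded and $\CX$ is finite dimensional, Bolzano--Weierstrass gives a cluster point, and if $\norm{\nabla f(x_k)}\not\to 0$ there would be a subsequence with $\norm{\nabla f(x_k)}\geq\epsilon$ which, again by compactness, would have a convergent sub-subsequence; its limit would be a non-stationary cluster point, contradicting \Cref{thm_clusterpointsarestationar}. This sidesteps entirely the issue you spend most of your effort on, namely that $\Omega$ and $\Omega_\delta$ may be unbounded so that \Cref{ass_globconv} cannot be verified literally. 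Your resolution --- replacing $\Omega_\delta$ by the compact ball $\overline{\Ballop{R+\delta}(0)}$ containing $(x_k)$ and all perturbed points $x_k+\theta_k\hat\alpha_k d_k$, invoking Heine--Cantor, and re-running the contradiction argument of \Cref{thm_globconv} (with \Cref{rem_sWPwithoutuniformcontinuity} applied to a convergent sub-subsequence in the \WP~case) --- is sound and you are appropriately explicit that it requires re-opening the proof rather than citing the theorem. But note that this re-run is essentially what the proof of \Cref{thm_clusterpointsarestationar} already does once and for all; having that theorem in hand, the corollary follows from the standard subsequence argument without touching \Cref{ass_globconv} or the internals of \Cref{thm_globconv} again. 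What your approach buys, if anything, is a direct quantitative link to \Cref{thm_globconv}; what the paper's buys is brevity and a cleaner dependency structure.
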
	

\begin{proof}
The claims readily follow from \Cref{thm_clusterpointsarestationar}. 
\end{proof}

In the infinite dimensional case we also want \emph{weak} cluster points to be stationary. 
Recall from \Cref{thm_globconv} that $\lim_{k\to\infty}\norm{\nabla f(x_k)}=0$ holds under 
\Cref{ass_globconv}.

\begin{lemma}\label{lem_weakclparestationary}
Let \Cref{ass_basic} hold and let $\nabla f$ be weak-to-weak continuous. 
Let $(x_k)$ be generated by Algorithm~\ref{alg_hybrid} and let $\lim_{k\to\infty}\norm{\nabla f(x_k)}=0$. 
Then every weak cluster point of $(x_k)$ is stationary. 
\end{lemma}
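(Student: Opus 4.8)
The plan is to exploit the defining property of weak-to-weak continuity together with the hypothesis $\lim_{k\to\infty}\norm{\nabla f(x_k)}=0$, which already provides strong—and hence weak—convergence of the gradients to zero. Beyond the fact that Algorithm~\ref{alg_hybrid} produces the sequence $(x_k)$ (guaranteed under \Cref{ass_basic}), no structural information about the method is needed.

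First I would fix a weak cluster point $\xopt$ of $(x_k)$ and select a subsequence $(x_k)_{k\in K}$ with $x_k\rightharpoonup\xopt$ as $K\ni k\to\infty$. By the assumed weak-to-weak continuity of $\nabla f$, this yields $\nabla f(x_k)\rightharpoonup\nabla f(\xopt)$ along $K$. On the other hand, the hypothesis $\lim_{k\to\infty}\norm{\nabla f(x_k)}=0$ means that $\nabla f(x_k)\to 0$ strongly, so in particular $\nabla f(x_k)\rightharpoonup 0$ along $K$. Since weak limits in the Hilbert space $\CX$ are unique, we conclude $\nabla f(\xopt)=0$, i.e., $\xopt$ is stationary.

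There is essentially no obstacle here: the statement is an immediate consequence of the elementary facts that strong convergence implies weak convergence and that weak limits are unique, combined with \Cref{thm_globconv}, which supplies the hypothesis $\lim_{k\to\infty}\norm{\nabla f(x_k)}=0$ under \Cref{ass_globconv}. The only point worth stressing is that weak-to-weak continuity is precisely the property needed to transfer the weak convergence of the iterates through the nonlinear map $\nabla f$; without it, a weak cluster point of $(x_k)$ need not be mapped to a weak cluster point of $(\nabla f(x_k))$, and the argument would break down.
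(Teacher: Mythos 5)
Your proof is correct and follows essentially the same route as the paper, which also passes the weak convergence of a subsequence through the weak-to-weak continuity of $\nabla f$ and then identifies the limit as zero. The only cosmetic difference is that the paper concludes via the weak lower semicontinuity of $\norm{\cdot}$, whereas you use uniqueness of weak limits after noting that strong convergence of $(\nabla f(x_k))$ to zero implies weak convergence; both are equally valid one-line finishes.
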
	

\begin{proof}
As $\norm{\cdot}$ is weakly lower semicontinuous, the proof is straightforward. 
\end{proof}

\subsection{Linear convergence}\label{sec_linconv}

In this section we prove that Algorithm~\ref{alg_hybrid} converges linearly under mild assumptions and 
that it turns into classical~\LBFGS~under first order sufficient optimality conditions. 
We divide the section into three parts.

\subsubsection{Preliminaries}

The main result on linear convergence, \Cref{thm_linconv}, will show that $(f(x_k))$ converges q-linearly while 
$(x_k)$ and $(\nabla f(x_k))$ satisfy estimates that imply \emph{$l$-step q-linear convergence} for all sufficiently large $l$. 

\begin{definition}
We call $(x_k)\subset\CX$ \emph{$l$-step q-linearly convergent} for some $l\in\N$, iff there exist $\xopt\in\CX$, $\bar k\geq 0$ and $\kappa\in(0,1)$ such that $\norm{x_{k+l}-\xopt}\leq\kappa\norm{x_k-\xopt}$ is satisfied for all $k\geq\bar k$.
\end{definition}

For $l=1$ this is q-linear convergence.	It is easy to see that $l$-step q-linear convergence for an arbitrary $l$ implies r-linear convergence whereas the opposite is not necessarily true. 

We are not aware of works on \LBFGS-type methods that use the concept of $l$-step q-linear convergence. 
However, for \BB-type methods the notion appears in \cite{DL02,AK20,AK22} in convex and nonconvex settings.

We use the following assumption to obtain linear convergence of \ref{alg_hybrid}.

\begin{assumption}\label{ass_linconv}	
\phantom{to create linebreak}
\begin{enumerate}
	\item[1)] \Cref{ass_basic} holds.
	\item[2)] The constant $c_2$ in Algorithm~\ref{alg_hybrid} satisfies $c_2<(2m+1)^{-1}$ if the Armijo rule with backtracking is used, and 
	$c_2<(2m+2)^{-1}$ else.
\end{enumerate}		
\end{assumption}	

Note that the requirement on $c_2$ in part~2) restricts the rate with which $\omega_k$ can go to zero, cf. the definition of $\omega_k$ in Line~\ref{line_defomegak} of Algorithm~\ref{alg_hybrid}.
Due to \Cref{lem_BkandBkinversebounded} this limits the growth rate of the condition number of the \LBFGS~operator $H_k$. 
The assumption on $c_2$ is crucial to show that if a stationary cluster point $\xopt$ belongs to a ball in which $f$ is strongly convex, then $\xopt$ is \emph{attractive}, i.e., the entire sequence $(x_k)$ converges to $\xopt$. This property is the fundamental building block for all subsequent results, including for the transition to classical \LBFGS~in \Cref{thm_lbfgsmislbfgs} and for the linear convergence in \Cref{thm_linconv}. 
In other words, the calibration of the cautious updates according to 2), which we have not seen elsewhere, is of vital importance from a theoretical point of view. 

\begin{lemma}\label{lem_aux}
Let \Cref{ass_linconv} hold and let $(x_k)$ be generated by Algorithm~\ref{alg_hybrid}. 
Let $(x_k)$ have a cluster point $\xopt$ with a convex neighborhood in which $f$ is strongly convex. 
Then $\lim_{k\to\infty}\norm{x_k - \xopt}=0$. 
\end{lemma}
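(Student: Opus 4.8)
The plan is to show that once the iterates enter a small enough ball around $\xopt$ inside the region of strong convexity, they cannot escape, and then to use that the function values $f(x_k)$ decrease to $f(\xopt)$ to force $x_k\to\xopt$. Let $B:=\Ballop{\rho}(\xopt)$ be a convex neighborhood on which $f$ is strongly convex with modulus $\mu>0$. Since $\xopt$ is a cluster point, $\nabla f(\xopt)=0$ by \Cref{thm_clusterpointsarestationar}, so $\xopt$ is the unique minimizer of $f$ on $B$ and strong convexity gives the two-sided bound $\frac{\mu}{2}\norm{x-\xopt}^2\leq f(x)-f(\xopt)\leq \frac{1}{2\mu}\norm{\nabla f(x)}^2$ on $B$ (the right inequality from the standard strong-convexity/gradient inequality). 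First I would fix a radius $r<\rho$ and a sublevel threshold so that $\{x\in B:\ f(x)\leq f(\xopt)+\varepsilon\}\subset \Ballop{r/2}(\xopt)$; this uses the lower bound just stated. Because $\xopt$ is a cluster point and $(f(x_k))$ is nonincreasing and convergent (\Cref{lem_welldef}), its limit is $\le f(\xopt)$, in fact equal to $f(\xopt)$ by continuity along the cluster subsequence; so there is an index $\bar k$ with $x_{\bar k}\in\Ballop{r/4}(\xopt)$ and $f(x_k)\le f(\xopt)+\varepsilon$ for all $k\ge\bar k$.

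The crux is a \emph{trapping} argument: I claim that if $x_k\in\Ballop{r/2}(\xopt)$ for some $k\ge\bar k$, then $x_{k+1}\in\Ballop{r/2}(\xopt)$ as well, so by induction the tail stays in $B$ and in fact in the small ball $\Ballop{r/2}(\xopt)$. To see a single step does not leave the ball, I would bound $\norm{s_k}=\alpha_k\norm{d_k}$ from above. Here \Cref{cor_BkandBkinversebounded} gives $\norm{d_k}\le C\max\{1,\norm{\nabla f(x_k)}^{-(2m+1)c_2}\}\norm{\nabla f(x_k)}$, and since on $B$ we have $\norm{\nabla f(x_k)}\le L\norm{x_k-\xopt}$ for a local Lipschitz-type constant (or simply $\norm{\nabla f(x_k)}\to 0$ along the tail), $\norm{d_k}$ is controlled by $\norm{\nabla f(x_k)}^{1-(2m+1)c_2}$, which tends to $0$; the exponent is positive precisely by \Cref{ass_linconv}~2). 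For the Armijo case $\alpha_k\le 1$, so $\norm{s_k}$ is small for large $k$; for the Wolfe--Powell case I would additionally invoke that $\alpha_k\norm{d_k}$ is bounded (e.g. via the descent/Armijo part and the already-established smallness of $\nabla f(x_k)$, or by the standard estimate $\alpha_k\ge$ const$\cdot\frac{-\nabla f(x_k)^Td_k}{L\norm{d_k}^2}$ bounding $\alpha_k$ from above through $\norm{d_k}$). Either way, choosing $\bar k$ larger if needed, $\norm{s_k}<r/4$ for all $k\ge\bar k$ that satisfy $x_k\in B$. Combined with the induction base $x_{\bar k}\in\Ballop{r/4}(\xopt)$, this gives $x_k\in\Ballop{r/2}(\xopt)\subset B$ for all $k\ge\bar k$.

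With the whole tail confined to $B$, I finish with the strong-convexity sandwich: $\frac{\mu}{2}\norm{x_k-\xopt}^2\le f(x_k)-f(\xopt)\to 0$, whence $\norm{x_k-\xopt}\to 0$, which is the claim. I expect the main obstacle to be the bookkeeping in the trapping step, specifically making the step-length estimate uniform in $k$ for the Wolfe--Powell rule — one must be careful that $\alpha_k$ cannot be huge; this is handled by noting $\norm{\nabla f(x_k)}$ is already small along the tail (so $d_k$ is small) together with the curvature condition, which prevents an overly long step relative to $\norm{d_k}$. Once the iterates are known to stay in the strongly convex region, the conclusion is immediate.
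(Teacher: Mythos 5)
Your proposal follows essentially the same two-part strategy as the paper's proof: first show that steps taken from points near $\xopt$ are small (using the bound $\norm{d_k}\leq C\norm{\nabla f(x_k)}^{1-(2m+1)c_2}$ with positive exponent from \Cref{ass_linconv}~2) and continuity of $\nabla f$ at $\xopt$), then trap the tail of the sequence via the strong-convexity growth condition $f(x)-f(\xopt)\geq\frac{\mu}{2}\norm{x-\xopt}^2$ together with the fact that $(f(x_k))$ decreases to $f(\xopt)$. Your sublevel-set containment $\{x\in B:\,f(x)\leq f(\xopt)+\varepsilon\}\subset\Ballop{r/2}(\xopt)$ is the same device as the paper's annulus argument.

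The one place where your justification is off is the \WP~case of the step bound. The curvature condition does \emph{not} prevent an overly long step: combined with a Lipschitz bound it yields a \emph{lower} bound of the form $\alpha_k\geq c\,\frac{-\nabla f(x_k)^Td_k}{\norm{d_k}^2}$, so your second suggested route would not close the argument (and a Lipschitz constant for $\nabla f$ is not even available under the hypotheses of this lemma). The correct route is the first one you gesture at: the Armijo part of \WP~gives $\sigma\alpha_k\,\nabla f(x_k)^TH_k\nabla f(x_k)\leq f(x_k)-f(x_{k+1})\leq f(x_k)-f(\xopt)\leq\frac{1}{2\mu}\norm{\nabla f(x_k)}^2$, hence $\alpha_k\leq\norm{H_k^{-1}}/(2\sigma\mu)$. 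This upper bound is not uniform: by \Cref{lem_BkandBkinversebounded} it scales like $\omega_k^{-1}\sim\norm{\nabla f(x_k)}^{-c_2}$, so $\norm{s_k}\lesssim\norm{\nabla f(x_k)}^{1-2(m+1)c_2}$, and it is precisely to keep \emph{this} exponent positive that \Cref{ass_linconv}~2) imposes the stronger requirement $c_2<(2m+2)^{-1}$ in the \WP~case. Your write-up only invokes the positivity of $1-(2m+1)c_2$, so this extra factor and the reason for the sharper constant in the assumption should be made explicit.
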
	

\begin{proof}
Let $\nbhd$ denote the neighborhood of $\xopt$. The proof consists of two parts.\\
\textbf{Part 1: Cluster points induce vanishing steps.}\\		
First we show that for any $\epsilon>0$ there exists $\delta'>0$ such that for all $k\in\N_0$ the implication 
$\norm{x_k-\xopt}<\delta'\Rightarrow\norm{s_k}<\epsilon$ holds true. 
Apparently, to prove this it suffices to consider $\epsilon$ so small that $\Ballop{\epsilon}(\xopt)\subset\nbhd$. 
Let such an $\epsilon$ be given. 
From \Cref{cor_BkandBkinversebounded} we infer that for all $k\in\N_0$ we have  
\begin{equation}\label{eq_estgradJinlcp}
	\norm{d_k} \leq C\norm{\nabla f(x_k)}^{1-(2m+1)c_2}.
\end{equation}			
The exponent in \cref{eq_estgradJinlcp} is positive because of the assumption $c_2<(2m+1)^{-1}$,
hence $\norm{d_k}<\epsilon$ whenever $\norm{x_k-\xopt}<\delta'$ for some sufficiently small $\delta'>0$, where we used the continuity of $\nabla f$ at $\xopt$ and $\nabla f(\xopt)=0$, which holds due to \Cref{thm_clusterpointsarestationar}. 
If Algorithm~\ref{alg_hybrid} uses the Armijo rule with backtracking, then $\alpha_k\leq 1$ for all $k$, 
so the desired implication follows. 
In the remainder of Part~1 we can therefore assume that all $\alpha_k$, $k\in\N_0$, satisfy the Wolfe--Powell conditions.
The strong convexity of $f$ in $\nbhd$ implies the existence of $\mu>0$ such that 
\begin{equation*}
f(x)-f(\xopt)\leq \frac{1}{2\mu}\norm{\nabla f(x)}^2
\end{equation*}
for all $x\in\nbhd$. 
In particular, this holds for $x=x_k$ whenever $x_k\in\Ballop{\epsilon}(\xopt)$. 
Moreover, the Armijo condition \cref{eq_armijocond} holds for all step sizes $\alpha_k$, $k\in\N_0$. 
Together, we have for all $k\in\N_0$ with $x_k\in\Ballop{\epsilon}(\xopt)$ 
\begin{equation*}
\begin{split}
	\alpha_k\sigma \nabla f(x_k)^T H_k \nabla f(x_k)
	&\leq f(x_k)-f(x_{k+1})\\
	&\leq f(x_k)-f(\xopt) 
	\leq \frac{1}{2\mu}\norm{\nabla f(x_k)}^2,
\end{split}
\end{equation*}	
where we used that $f(x_{k+1})\geq f(\xopt)$ as $(f(x_k))$ is monotonically decreasing. 
Thus, for these $k$
\begin{equation*}
\alpha_k \leq \frac{\norm{H_k^{-1}}}{2\sigma\mu}.
\end{equation*}	
From \Cref{lem_BkandBkinversebounded} we obtain that 
$\norm{H_k^{-1}} \leq \hat C\omega_k^{-1}$ for all $k\in\N_0$, where $\hat C>0$ is a constant. 
Decreasing $\delta'$ if need be, we may assume $\delta'\leq\epsilon$ and 
$\omega_k^{-1}=c_1^{-1}\norm{\nabla f(x_k)}^{-c_2}$ for all $k\in\N_0$ with $x_k\in\Ballop{\delta'}(\xopt)$.
Combining this with \cref{eq_estgradJinlcp} we obtain for all $k\in\N_0$ with $x_k\in\Ballop{\delta'}(\xopt)$ that
\begin{equation*}
\begin{split}
	\norm{s_k} = \alpha_k\norm{d_k} \leq \frac{\norm{H_k^{-1}}}{2\sigma\mu}\norm{d_k}
	& \leq \frac{C\hat C}{2\sigma\mu} \omega_k^{-1} \norm{\nabla f(x_k)}^{1-(2m+1)c_2}\\
	& = \frac{C\hat C}{2\sigma\mu c_1}\norm{\nabla f(x_k)}^{1-2(m+1)c_2}.
\end{split}
\end{equation*}
The choice of $c_2$ in \Cref{ass_linconv}~2) implies $1-2(m+1)c_2>0$. 
Thus, after decreasing $\delta'$ if need be, 
there holds $\norm{s_k}<\epsilon$ for all $k\in\N_0$ with $x_k\in\Ballop{\delta'}(\xopt)$.
This finishes the proof of Part~1.\\
\noindent\textbf{Part 2: Convergence of the entire sequence $\mathbf{(x_k)}$.}\\		
Let $\epsilon'>0$ be so small that $\Ballop{\epsilon'}(\xopt)\subset\nbhd$. 
We have to show that there is $\bar k\geq 0$ such that $x_k\in\Ballop{\epsilon'}(\xopt)$ for all $k\geq \bar k$.  
Due to Part~1 we find a positive $\delta'$ such that 
for all $k\in\N_0$ the implication $x_k\in\Ballop{\delta'}(\xopt)\Rightarrow\norm{s_k}<\epsilon'/2$ holds true.
Decreasing $\delta'$ if need be, we can assume that $\delta'\leq\epsilon'/2$. 
It then follows that $x_{k+1}\in\Ballop{\epsilon'}(\xopt)$ for all $k\in\N_0$ with $x_k\in\Ballop{\delta'}(\xopt)$.
Next we use that the $\mu$-strongly convex function $f\vert_{\Ballop{\epsilon'}(\xopt)}$ satisfies the growth condition
\begin{equation}\label{eq_strconvgrc}
f(\xopt) + \frac{\mu}{2}\norm{x-\xopt}^2 \leq f(x)
\end{equation}
for all $x$ in $\Ballop{\epsilon'}(\xopt)$. 
Let $U:=\Ballop{\epsilon'}(\xopt)\setminus\Ballop{\delta'}(\xopt)$. 
Due to \cref{eq_strconvgrc} we have $f(x)-f(\xopt)\geq (\delta')^2\mu/2$ for all $x\in U$.
Since $(f(x_k))$ converges by \Cref{lem_welldef}, we find $\hat k$ such that $f(x_k)-f(\xopt)<(\delta')^2\mu/2$ for all $k\geq\hat k$, hence $x_k\not\in U$ for all $k\geq \hat k$.
Selecting $\bar k\geq \hat k$ such that $x_{\bar k}\in\Ballop{\delta'}(\xopt)$, 
we obtain that $x_{\bar k+1}\in\Ballop{\epsilon'}(\xopt)$ and 
$x_{\bar k+1}\not\in U$, thus $x_{\bar k+1}\in\Ballop{\delta'}(\xopt)$. 
By induction we infer that $x_k\in\Ballop{\delta'}(\xopt)$ for all $k\geq \bar k$, which concludes the proof as $\delta'\leq\epsilon'/2$.
\end{proof}

Next we show that $(\norm{H_k})$ and $(\norm{H_k^{-1}})$ are bounded.

\begin{lemma}\label{lem_Bkandinverseboundedforconvtopssosc}
Let \Cref{ass_basic} hold and let $(x_k)$ be generated by Algorithm~\ref{alg_hybrid}. 
Suppose there are $\xopt\in\CX$ and a convex neighborhood $\nbhd$ of $\xopt$ such that $\lim_{k\to\infty}x_k=\xopt$, $f\vert_{\nbhd}$ is strongly convex
and $\nabla f\vert_{\nbhd}$ is Lipschitz. Then $(\norm{H_k})$ and $(\norm{H_k^{-1}})$ are bounded. 
\end{lemma}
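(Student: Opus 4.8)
The key is to exploit strong convexity and Lipschitz continuity of $\nabla f$ near $\xopt$ together with the fact that $x_k\to\xopt$, so that these two properties are eventually available along the whole tail of the sequence. First I would fix a radius $\rho>0$ with $\Ballop{\rho}(\xopt)\subset\nbhd$ and pick $\bar k$ so large that $x_k\in\Ballop{\rho/2}(\xopt)$ for all $k\geq\bar k$; by shrinking $\rho$ one may also assume $x_{k+1}\in\Ballop{\rho}(\xopt)$ for those $k$, so that the segment $[x_k,x_{k+1}]$ lies in $\nbhd$ (it is convex). Let $\mu>0$ be the modulus of strong convexity and $L>0$ the Lipschitz constant of $\nabla f$ on $\nbhd$. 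The standard consequence is that for every $k\geq\bar k$ with $y_k^T s_k>0$ one has the two-sided bound
\begin{equation*}
	\mu\norm{s_k}^2 \leq y_k^T s_k \leq L\norm{s_k}^2,
	\qquad
	\frac{y_k^T s_k}{\norm{y_k}^2}\geq \frac{1}{L},
	\qquad
	\frac{y_k^T s_k}{\norm{s_k}^2}\geq \mu,
\end{equation*}
using $\norm{y_k}\leq L\norm{s_k}$ and the integral form of the mean value theorem for $\nabla f$ on the segment. Hence $q(s_k,y_k)=\min\{y_k^Ts_k/\norm{s_k}^2,\,y_k^Ts_k/\norm{y_k}^2\}\geq\min\{\mu,1/L\}=:\bar q>0$ for all $k\geq\bar k$.

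**Second step.** I would then show that $\gamma_k$ is bounded above and below for large $k$. Since $x_k\to\xopt$ and $\nabla f(\xopt)=0$ (by \Cref{thm_clusterpointsarestationar}, or simply because $\xopt$ is a strict minimizer), $\norm{\nabla f(x_k)}\to 0$, so $\omega_k=\min\{c_0,c_1\norm{\nabla f(x_k)}^{c_2}\}\to 0$ and $[\omega_k,\omega_k^{-1}]$ eventually contains the fixed compact interval $[1/L,1/\mu]$. For $k>\bar k$, provided $(s_{k-1},y_{k-1})$ was accepted (which it is under Wolfe--Powell; under Armijo it may not be, but then $\gamma_k^\Hy=0,\gamma_k^\Hs=\infty$ and Line~\ref{line_choicetauk} only forces $\gamma_k\in[\omega_k,\omega_k^{-1}]$), the interval $[\gamma_k^\Hy,\gamma_k^\Hs]$ contains $\gamma_k^\Hy\geq 1/L$ and $\gamma_k^\Hs\leq 1/\mu$; intersecting with $[\omega_k,\omega_k^{-1}]$, which contains $[1/L,1/\mu]$, the selection rule yields $\gamma_k\in[\gamma_k^\Hy,\gamma_k^\Hs]\subseteq[1/L,1/\mu]$ in that case and $\gamma_k\in[\omega_k,\omega_k^{-1}]$ in the Armijo-skipped case. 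Either way, after possibly enlarging $\bar k$, all relevant $\gamma_k$ lie in a fixed interval bounded away from $0$ and $\infty$; for the finitely many $k<\bar k$ the values $\gamma_k,\omega_k$ and the stored pairs are fixed, so they contribute only a finite, hence bounded, set of values.

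**Third step: bounding $\norm{H_k}$, $\norm{H_k^{-1}}$.** Now I would apply \Cref{lem_generalresultonnormbounds} to the construction of $H_k$ in \cref{eq_defLBFGSupd2}. For $k\geq\bar k+m$, all at most $m$ stored pairs that are used to form $H_k$ have index $j\geq\bar k$, hence satisfy $q_j\geq\bar q$, so in particular $y_j^Ts_j/\norm{s_j}^2\geq\bar q$ and $y_j^Ts_j/\norm{y_j}^2\geq\bar q$; thus the hypotheses of \Cref{lem_generalresultonnormbounds} hold with $\kappa_1=\kappa_2=1/\bar q$ and $M\leq m$, and $\norm{H_k^{(0)}}=\gamma_k\leq 1/\mu$, $\norm{(H_k^{(0)})^{-1}}=\gamma_k^{-1}\leq L$. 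Then \cref{eq_estlargestEV} gives $\norm{H_k^{-1}}\leq L+m/\bar q$ and \cref{eq_estsmallestEV} gives $\norm{H_k}\leq 5^m\max\{1,1/\mu\}\max\{1,\bar q^{-m},\bar q^{-2m}\}$, both independent of $k$. The remaining finitely many indices $k<\bar k+m$ give finitely many finite values. Taking the maximum over the tail bound and these finitely many values shows $(\norm{H_k})$ and $(\norm{H_k^{-1}})$ are bounded.

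**Main obstacle.** The one subtlety I anticipate is the interaction between the Armijo-only case (where $(s_k,y_k)$ need not be stored, so some of the $m$ slots can hold old, pre-$\bar k$ pairs even for large $k$) and the claim that only pairs with $q_j\geq\bar q$ are used. This is resolved by the selection in Line~\ref{line_tlr}: only stored pairs with $q_j\geq\omega_k$ enter the recursion, and since $\omega_k\to 0$ but the pre-$\bar k$ pairs have a fixed (possibly tiny but positive, or possibly they were never stored) value of $q_j$, one must either argue they are flushed out of storage after $m$ accepted updates, or — cleaner — note that under Wolfe--Powell every $(s_k,y_k)$ with $k\geq\bar k$ is accepted (since $y_k^Ts_k\geq\mu\norm{s_k}^2>0$) so after $m$ steps past $\bar k$ the storage contains only good pairs; under Armijo one instead observes that any pair used satisfies $q_j\geq\omega_k$ and combines this directly with the $\omega_k$-dependent bounds of \Cref{lem_BkandBkinversebounded} restricted to the tail, where $\omega_k$ is still positive — but to get a \emph{uniform} bound one does need the good pairs, so the Wolfe--Powell flushing argument (or an analogous bookkeeping of which indices occupy the storage) is the technical heart of the proof. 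Everything else is a routine application of \Cref{lem_generalresultonnormbounds}.
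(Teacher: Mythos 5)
Your proposal follows the same route as the paper: strong monotonicity of $\nabla f$ on $\nbhd$ gives curvature bounds for all tail pairs, these feed into \Cref{lem_generalresultonnormbounds}, and the seed scaling $\gamma_k$ is controlled by showing that $\gamma_k^\Hy$ and $\gamma_k^\Hs$ land in a fixed compact interval that $[\omega_k,\omega_k^{-1}]$ eventually contains. There is, however, one loose end that you explicitly flag but do not close, and it is exactly the point the paper's proof settles in one line. You worry that under an Armijo-only line search the pair $(s_k,y_k)$ ``may not be stored'', so that old pre-$\bar k$ pairs could linger in the storage and the fallback $\gamma_k\in[\omega_k,\omega_k^{-1}]$ could be invoked for infinitely many $k$ --- which would indeed destroy the uniform bound, since $\omega_k^{-1}\to\infty$. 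But the acceptance test in Line~\ref{line_acceptanceofysforstorage} is simply $y_k^T s_k>0$, and your own first step shows $y_k^T s_k\geq\mu\norm{s_k}^2>0$ for all $k\geq\bar k$ from strong monotonicity alone; this has nothing to do with the curvature condition. Hence under \emph{either} line search every pair with index $j\geq\bar k$ enters the storage, the storage equals $\{k-m,\ldots,k-1\}$ for all $k\geq\bar k+m$, and the ``Armijo-skipped'' branch of Line~\ref{line_choicetauk} never fires for large $k$. With that observation (which is how the paper argues) your proof is complete; the ``analogous bookkeeping'' you defer to is not needed.

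One smaller quantitative point: your bound $y_k^T s_k/\norm{y_k}^2\geq 1/L$ is the co-coercivity (Baillon--Haddad) inequality, and the justification via ``the integral form of the mean value theorem'' requires second derivatives (or at least a convex extension argument), which the paper deliberately avoids assuming. The elementary chain $y_k^T s_k\geq\mu\norm{s_k}^2\geq(\mu/L^2)\norm{y_k}^2$ yields the weaker constant $\mu/L^2$, which is all that is needed and is what the paper uses.
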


\begin{proof}
Let $\bar k$ be such that $x_k\in\nbhd$ for all $k\geq\bar k$. Since $f\vert_{\nbhd}$ is strongly convex, there is $\mu>0$ such that $\nabla f$ is $\mu$-strongly monotone in $\nbhd$, i.e.,
\begin{equation*}
\bigl[ \nabla f(\hat x)-\nabla f(x) \bigr]^T (\hat x-x) \geq \mu \norm{\hat x-x}^2
\end{equation*}
for all $x,\hat x\in\nbhd$. By inserting $\hat x=x_{j+1}$ and $x=x_j$ we infer that 
\begin{equation}\label{eq_kestwp}
y_j^T s_j\geq \mu\norm{s_j}^2 \qquad\text{ and }\qquad
y_j^T s_j\geq \frac{\mu}{L^2} \norm{y_j}^2
\end{equation} 
for all $j\geq\bar k$, where the second estimate follows from the first by the Lipschitz continuity of $\nabla f$ in $\nbhd$.
Note that $y_j^T s_j>0$ for all $j\geq\bar k$, so in any iteration $k\geq\bar k$ the pair $(s_k,y_k)$ enters the storage, cf. Line~\ref{line_acceptanceofysforstorage} and Line~\ref{line_skykappend} in Algorithm~\ref{alg_hybrid}. 
Therefore, at the beginning of iteration $k\geq\bar k+m$ we have $\CI=\{k-m,k-m+1,\ldots,k-1\}$ 
for the index set of the storage (with $\CI=\emptyset$ if $m=0$), 
and consequently \cref{eq_kestwp} holds for all pairs $(s_j,y_j)$ in the storage whenever the iteration counter $k$ is sufficiently large. 
In view of \Cref{lem_generalresultonnormbounds} it only remains to prove that $H_k^{(0)}=\gamma_k I$ and its inverse are bounded independently of $k$, i.e., that $(\gamma_k)$ and $(\gamma_k^{-1})$ are bounded from above. 
Since $\lim_{k\to\infty}\norm{\nabla f(x_k)}=0$ by \Cref{thm_clusterpointsarestationar}, we infer that $\lim_{k\to\infty}\omega_k=0$. 
We now show that $(\gamma_k^\Hy)$ is bounded away from zero and that $(\gamma_k^\Hs)$ is bounded from above for sufficiently large $k$. This implies that $[\gamma_k^\Hy,\gamma_k^\Hs]\cap[\omega_k,\omega_k^{-1}]=[\gamma_k^\Hy,\gamma_k^\Hs]$ for all sufficiently large $k$, in turn 
showing that $(\gamma_k)$ and $(\gamma_k^{-1})$ are bounded, cf. Line~\ref{line_choicetauk} in Algorithm~\ref{alg_hybrid}. 
As we have already established, there holds $y_k^T s_k>0$ for all $k\geq\bar k$. By Line~\ref{line_acceptanceofysforstorage} we thus deduce that for all $k\geq\bar k$, $\gamma_{k+1}^\Hy$ and $\gamma_{k+1}^\Hs$ are computed according to Definition~\ref{def_tau_B}. 
Together with \cref{eq_kestwp} we readily obtain that
\begin{equation*}
\gamma_{k+1}^\Hy = \frac{y_k^T s_k}{\norm{y_k}^2} \geq \frac{\mu}{L^2}
\qquad\text{ and }\qquad
\gamma_{k+1}^\Hs = \frac{\norm{s_k}^2}{y_k^T s_k} \leq \frac{1}{\mu},
\end{equation*}
both valid for all $k\geq\bar k$. This concludes the proof.
\end{proof}

\subsubsection{Transition to classical \LBFGS}\label{sec_transitiontoLBFGS}

Before we prove the linear convergence of Algorithm~\ref{alg_hybrid} in \Cref{thm_linconv}, let us draw some conclusions from 
\Cref{lem_Bkandinverseboundedforconvtopssosc} that shed more light on Algorithm~\ref{alg_hybrid}.
In particular, this will enable us to establish in \Cref{thm_lbfgsmislbfgs} that Algorithm~\ref{alg_hybrid} turns into Algorithm~\ref{alg_LBFGS} when approaching a minimizer that satisfies sufficient optimality conditions. 

We start by noting that in the situation of \Cref{lem_Bkandinverseboundedforconvtopssosc}, 
eventually each new update pair $(s_k,y_k)$ enters the storage, all stored pairs are used to compute~$H_k$,
and any $\gamma_k\in [\gamma_k^\Hy,\gamma_k^\Hs]$ can be chosen.  

\begin{lemma}\label{lem_noone}
Under the assumptions of \Cref{lem_Bkandinverseboundedforconvtopssosc} there exists $\bar k\in\N_0$ such that 
when arriving at Line~\ref{line_tlr} of Algorithm~\ref{alg_hybrid} in iteration $k\geq\bar k+m$ there holds 
$\CI = \{k-m,k-m+1,\ldots,k-1\}$ (with $\CI=\emptyset$ if $m=0$), 
i.e., the storage $\{(s_j,y_j)\}_{j\in\CI}$ consists of the $m$ most recent update pairs.
Moreover, all pairs are used for the computation of $H_k$ in Line~\ref{line_tlr} for $k\geq\bar k$. 
Also, $\bar k$ can be chosen such that for all $k\geq\bar k$ we have $[\gamma_k^\Hy,\gamma_k^\Hs]\cap [\omega_k,\omega_k^{-1}]=[\gamma_k^\Hy,\gamma_k^\Hs]$ 
in Line~\ref{line_choicetauk} of Algorithm~\ref{alg_hybrid}.
\end{lemma}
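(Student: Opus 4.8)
The plan is to harvest the facts already assembled in the proof of \Cref{lem_Bkandinverseboundedforconvtopssosc} and package them into the three claimed statements. Recall that under the present hypotheses we have $\lim_{k\to\infty}x_k=\xopt$ with $f$ strongly convex and $\nabla f$ Lipschitz on a convex neighborhood $\nbhd$ of $\xopt$, and \Cref{thm_clusterpointsarestationar} gives $\nabla f(\xopt)=0$, hence $\lim_{k\to\infty}\norm{\nabla f(x_k)}=0$ and $\lim_{k\to\infty}\omega_k=0$. First I would fix $\bar k_0$ so large that $x_k\in\nbhd$ for all $k\geq\bar k_0$; then, exactly as in the proof of \Cref{lem_Bkandinverseboundedforconvtopssosc}, $\mu$-strong monotonicity of $\nabla f$ on $\nbhd$ applied to $\hat x=x_{j+1}$, $x=x_j$ yields $y_j^T s_j\geq\mu\norm{s_j}^2>0$ for all $j\geq\bar k_0$. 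Consequently, in every iteration $k\geq\bar k_0$ the test $y_k^T s_k>0$ in Line~\ref{line_acceptanceofysforstorage} is passed, so $(s_k,y_k)$ is appended and $k$ is added to $\CI$ in Line~\ref{line_skykappend}, while Line~\ref{line_skykremove} then removes the oldest index once $\lvert\CI\rvert>m$. Therefore, starting from iteration $\bar k_0+m$, the index set at Line~\ref{line_tlr} is exactly $\CI=\{k-m,\ldots,k-1\}$ (empty if $m=0$); this proves the first assertion with $\bar k:=\bar k_0$.

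Next I would establish the second assertion, that every stored pair is actually used in Line~\ref{line_tlr}, i.e.\ $q_j\geq\omega_k$ for all $j\in\CI$ when $k$ is large. From $y_j^T s_j\geq\mu\norm{s_j}^2$ and, via the Lipschitz bound $\norm{y_j}\leq L\norm{s_j}$, also $y_j^T s_j\geq (\mu/L^2)\norm{y_j}^2$ (this is \cref{eq_kestwp}), we get
\begin{equation*}
q_j=q(s_j,y_j)=\min\left\{\frac{y_j^T s_j}{\norm{s_j}^2},\,\frac{y_j^T s_j}{\norm{y_j}^2}\right\}\geq\min\left\{\mu,\frac{\mu}{L^2}\right\}=:\bar q>0
\end{equation*}
for every $j\geq\bar k_0$. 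Since $\omega_k\leq c_1\norm{\nabla f(x_k)}^{c_2}\to 0$, there is $\bar k_1\geq\bar k_0$ with $\omega_k<\bar q$ for all $k\geq\bar k_1$. For $k\geq\bar k_1$, every pair in the storage carries an index $j\geq k-m\geq\bar k_1-m$; enlarging $\bar k_1$ to $\bar k_1+m$ if necessary ensures $j\geq\bar k_0$ for all stored $j$, whence $q_j\geq\bar q>\omega_k$ and the pair is used. This gives the second claim.

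For the third assertion I would reuse the terminal bounds from \Cref{lem_Bkandinverseboundedforconvtopssosc}: for all $k\geq\bar k_0$ one has $\gamma_{k+1}^\Hy=y_k^T s_k/\norm{y_k}^2\geq\mu/L^2$ and $\gamma_{k+1}^\Hs=\norm{s_k}^2/(y_k^T s_k)\leq 1/\mu$. Thus for $k\geq\bar k_0+1$ the interval $[\gamma_k^\Hy,\gamma_k^\Hs]$ is contained in the fixed interval $[\mu/L^2,1/\mu]$. Since $\omega_k\to 0$, pick $\bar k_2$ with $\omega_k\leq\mu/L^2$ and $\omega_k^{-1}\geq 1/\mu$ for all $k\geq\bar k_2$; then $[\gamma_k^\Hy,\gamma_k^\Hs]\subset[\omega_k,\omega_k^{-1}]$, so the intersection in Line~\ref{line_choicetauk} equals $[\gamma_k^\Hy,\gamma_k^\Hs]$. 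Taking $\bar k:=\max\{\bar k_0+m,\bar k_1+m,\bar k_2\}$ (or simply the largest of the three thresholds, then adding $m$) makes all three statements hold simultaneously for $k\geq\bar k$ (with the storage description valid for $k\geq\bar k+m$ as stated). The main obstacle is purely bookkeeping: being careful that when the iteration counter $k$ is large, every index still present in $\CI$ is itself large enough for \cref{eq_kestwp} and the bound $q_j\geq\bar q$ to apply — handled above by the $+m$ shift in the choice of $\bar k$. No genuinely new estimate beyond those in \Cref{lem_Bkandinverseboundedforconvtopssosc} is needed.
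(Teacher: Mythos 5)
Your proof is correct and follows essentially the same route as the paper, which simply harvests the facts already established in the proof of \Cref{lem_Bkandinverseboundedforconvtopssosc} (the storage structure, the bound $q_j\geq\min\{\mu,\mu/L^2\}$ from \cref{eq_kestwp}, $\omega_k\to 0$, and the bounds on $\gamma_k^\Hy,\gamma_k^\Hs$). Your version just spells out the index bookkeeping more explicitly; no difference in substance.
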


\begin{proof}
In the proof of \Cref{lem_Bkandinverseboundedforconvtopssosc} we already argued that $\CI = \{k-m,\ldots,k-1\}$ for $k\geq \bar k+m$. 
Regarding the computation of $H_k$ we recall that \cref{eq_kestwp} holds for all $j\geq\bar k$,
so $q(s_j,y_j)\geq \min\{\mu,\mu/L^2\}$ for all these $j$ and hence for all pairs in the storage in iteration $k\geq\bar k+m$.
We have also shown in the proof of \Cref{lem_Bkandinverseboundedforconvtopssosc} that $\lim_{k\to\infty}\omega_k=0$.	
Thus, for all large $k$ there holds
$q(s_j,y_j)\geq \omega_k$ for all $j\in\CI$, so all pairs in the storage are used for the computation of $H_k$, cf.~Line~\ref{line_tlr}.
We also recall from the proof of \Cref{lem_Bkandinverseboundedforconvtopssosc} that $[\gamma_k^\Hy,\gamma_k^\Hs]\cap[\omega_k,\omega_k^{-1}]=[\gamma_k^\Hy,\gamma_k^\Hs]$ for all sufficiently large $k$.		
\end{proof}

We can now argue that Algorithm~\ref{alg_hybrid} turns into Algorithm~\ref{alg_LBFGS} close to $\xopt$. More precisely, 
if in iteration $k$ of Algorithm~\ref{alg_hybrid}, where $k$ is sufficiently large, we took a snapshot of the storage $\CI$ 
and initialized Algorithm~\ref{alg_LBFGS} with $x_k$ and that storage, then 
the iterates generated subsequently by these algorithms would be identical (and so would the storages, the step sizes, etc.), regardless of the choice of constants in Algorithm~\ref{alg_hybrid}. 
Of course, this assumes that the seed matrices and the step sizes are selected in the same way in both algorithms, e.g.,
$\alpha_k$ is determined according to the Armijo condition with the same constant $\sigma$ and using identical backtracking mechanisms. 
For ease of presentation let us assume that both algorithms choose
$H_{k}^{(0)}=\gamma_{k}^\Hy I$ whenever possible, i.e., Algorithm~\ref{alg_LBFGS} makes this choice if $\gamma_{k}^\Hy>0$ while
Algorithm~\ref{alg_hybrid} makes this choice if $\gamma_{k}^\Hy\in[\omega_k,\omega_k^{-1}]$. 	
To distinguish the quantities generated by the two algorithms, we indicate those of Algorithm~\ref{alg_LBFGS} by a \emph{hat}, e.g., we write $(\hat x_k)$ for the iterates of \ref{alg_LBFGS} and $(x_k)$ for the iterates of \ref{alg_hybrid}.

\begin{theorem}\label{thm_lbfgsmislbfgs}
Let \Cref{ass_linconv} hold and let $(x_k)$ be generated by Algorithm~\ref{alg_hybrid}. 
Let $(x_k)$ have a cluster point $\xopt$ with a convex neighborhood $\nbhd$ such that $f\vert_{\nbhd}$ is strongly convex and $\nabla f\vert_{\nbhd}$ is Lipschitz. Then $(x_k)$ converges to $\xopt$ and 
Algorithm~\ref{alg_hybrid} eventually turns into Algorithm~\ref{alg_LBFGS}. More precisely, 
consider the sequence $(\hat x_k)$ generated by Algorithm~\ref{alg_LBFGS} with starting point $\hat x_0:=x_{\bar k}$ for some $\bar k\geq m$ using the initial storage $\hat\CI:=\{(s_j,y_j)\}_{j=\bar k-m}^{\bar k-1}$.
Suppose that for all $k\geq 0$, Algorithm~\ref{alg_LBFGS} selects $\hat\alpha_k$ in the same way as Algorithm~\ref{alg_hybrid} selects 
$\alpha_{\bar k+k}$. If $\bar k$ is sufficiently large, then for any $k\in\N_0$ we have 
$x_{\bar k+k}=\hat x_k$, $H_{\bar k+k}=\hat H_k$, $\CI_{\bar k+k}=\hat\CI_k$ and $\alpha_{\bar k + k} = \hat\alpha_k$.
Moreover, if $\bar k$ is sufficiently large, then for all $k\geq\bar k$ the storage $\{(s_j,y_j)\}_{j\in\CI}$ consists of the $m$ most recent pairs, they are all used for the computation of $H_k$, and any $\gamma_k\in [\gamma_k^\Hy,\gamma_k^\Hs]$ is accepted in Line~\ref{line_choicetauk} of Algorithm~\ref{alg_hybrid}.  
\end{theorem}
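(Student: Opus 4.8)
The plan is to combine the three preparatory lemmas \textemdash\ \Cref{lem_aux}, \Cref{lem_Bkandinverseboundedforconvtopssosc} and \Cref{lem_noone} \textemdash\ with a straightforward induction that synchronizes the two algorithms. First I would invoke \Cref{lem_aux} to conclude that, under \Cref{ass_linconv} and the strong convexity of $f$ near the cluster point $\xopt$, the \emph{entire} sequence $(x_k)$ converges to $\xopt$. Once convergence of $(x_k)$ is established, the hypotheses of \Cref{lem_Bkandinverseboundedforconvtopssosc} and hence of \Cref{lem_noone} are met, so there is an index $\bar k_0\in\N_0$ such that for all $k\geq\bar k_0+m$ the storage index set satisfies $\CI=\{k-m,\ldots,k-1\}$, all stored pairs pass the test $q_j\geq\omega_k$ in Line~\ref{line_tlr}, and $[\gamma_k^\Hy,\gamma_k^\Hs]\cap[\omega_k,\omega_k^{-1}]=[\gamma_k^\Hy,\gamma_k^\Hs]$ in Line~\ref{line_choicetauk}. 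This already yields the ``Moreover'' clause of the theorem, once we take $\bar k\geq\bar k_0+m$.

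The heart of the argument is then the induction on $k\in\N_0$ showing $x_{\bar k+k}=\hat x_k$, $H_{\bar k+k}=\hat H_k$, $\CI_{\bar k+k}=\hat\CI_k$ and $\alpha_{\bar k+k}=\hat\alpha_k$, where $\bar k\geq\bar k_0+m$ is fixed and $\hat x_0:=x_{\bar k}$, $\hat\CI_0:=\{(s_j,y_j)\}_{j=\bar k-m}^{\bar k-1}=\CI_{\bar k}$. For the base case, $x_{\bar k}=\hat x_0$ and $\CI_{\bar k}=\hat\CI_0$ hold by construction; since $\bar k\geq\bar k_0+m$, in iteration $\bar k$ of \ref{alg_hybrid} every stored pair is used in Line~\ref{line_tlr}, which is exactly what \ref{alg_LBFGS} does with the same storage, so the two recursions \cref{eq_defLBFGSupd2} produce $H_{\bar k}=\hat H_0$ \textemdash\ here one also uses that both algorithms pick $H^{(0)}=\gamma^\Hy I$ under the agreed convention, which is legitimate for \ref{alg_hybrid} precisely because $\gamma_{\bar k}^\Hy\in[\omega_{\bar k},\omega_{\bar k}^{-1}]$ by \Cref{lem_noone}. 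Equal $H$, equal gradient, equal $\sigma$ and identical backtracking (or \WP) mechanism force $d_{\bar k}=\hat d_0$ and then $\alpha_{\bar k}=\hat\alpha_0$, hence $x_{\bar k+1}=\hat x_1$ and $y_{\bar k}=\hat y_0$. For the inductive step from $k$ to $k+1$: since $\bar k+k\geq\bar k_0+m$ and (from \Cref{lem_Bkandinverseboundedforconvtopssosc}'s proof) $y_{\bar k+k}^T s_{\bar k+k}>0$, both algorithms append the pair and both evict the oldest one when $|\CI|>m$, so $\CI_{\bar k+k+1}=\hat\CI_{k+1}$; repeating the $H$\,/\,$d$\,/\,$\alpha$ argument gives the remaining equalities. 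The point is that every branch (acceptance test in Line~\ref{line_acceptanceofysforstorage}, skipping test in Line~\ref{line_tlr}, the intersection in Line~\ref{line_choicetauk}, the removal in Line~\ref{line_skykremove}) is resolved \emph{identically} in the two algorithms once $k$ is large enough, so \ref{alg_hybrid} is literally executing \ref{alg_LBFGS}.

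The main obstacle \textemdash\ and it is a bookkeeping obstacle rather than a conceptual one \textemdash\ is making the indexing watertight: one must check that ``$\bar k$ sufficiently large'' can be taken uniform for \emph{all} $k\in\N_0$ simultaneously, i.e.\ that the good behavior persists for every subsequent iteration, not just finitely many. This is where $\lim_{k\to\infty}\omega_k=0$ (which follows from $\lim_{k\to\infty}\norm{\nabla f(x_k)}=0$ via \Cref{thm_clusterpointsarestationar}) together with the uniform lower bound $q(s_j,y_j)\geq\min\{\mu,\mu/L^2\}$ from \cref{eq_kestwp} is essential: it guarantees that for $k\geq\bar k_1$ with $\bar k_1$ large enough, $\omega_k\leq\min\{\mu,\mu/L^2\}$ and $\omega_k\leq\gamma_k^\Hy\leq\gamma_k^\Hs\leq\omega_k^{-1}$ hold for \emph{all} these $k$ at once, so the induction never leaves the ``good'' regime. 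One also has to handle the trivial termination branch in Line~\ref{line_term}: since $\norm{\nabla f(x_k)}\to 0$ but the $x_k$ stay in $\nbhd$ where $f$ is strongly convex, if some $\nabla f(x_k)=0$ then $x_k=\xopt$ and both algorithms terminate together, so the statement holds vacuously. A minor additional check is that the $m=0$ case (empty storage, i.e.\ the \BBB\ method) is covered by the same argument with $\CI=\emptyset$ throughout. Choosing $\bar k:=\max\{\bar k_0+m,\bar k_1\}$ and verifying these points completes the proof.
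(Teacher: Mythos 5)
Your proposal is correct and follows essentially the same route as the paper, whose proof is a two-line sketch invoking exactly \Cref{lem_aux} for convergence of $(x_k)$ to $\xopt$, then \Cref{lem_Bkandinverseboundedforconvtopssosc} and \Cref{lem_noone}, and concluding ``by induction over $k$.'' Your write-up merely fills in the bookkeeping (the synchronization induction, the uniformity of $\bar k$, the termination branch, and the $m=0$ case) that the paper leaves implicit.
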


\begin{proof}
By \Cref{lem_aux}, $(x_k)$ converges to $\xopt$, so the assumptions of \Cref{lem_Bkandinverseboundedforconvtopssosc} are satisfied. All claims then follows by induction over $k$ using \Cref{lem_noone}. 
\end{proof}

\begin{remark}\label{rem_LBFGSMagreeswithLBFGSforsmallconstants}
We conclude the study of the relationship between \ref{alg_LBFGS} and \ref{alg_hybrid} with the following observations. 
\begin{enumerate}
\item[1)] Let $\xopt$ be a stationary point having a neighborhood $\nbhd$ in which $f$ is strongly convex and $\nabla f$ is Lipschitz. It can be shown 
similarly to \Cref{lem_noone} that for any choice of constants $c_0,c_1,c_2$ in Algorithm~\ref{alg_hybrid} there is a convex neighborhood $\hat\nbhd\subset\nbhd$ such that
when initialized with any $x_0\in\hat\nbhd$, Algorithm~\ref{alg_LBFGS} and Algorithm~\ref{alg_hybrid} are identical (assuming that they choose $H_k^{(0)}$ and $\alpha_k$ in the same way for all $k$).
This shows that when initialized sufficiently close to a point that satisfies sufficient optimality conditions, 
Algorithm~\ref{alg_LBFGS} and Algorithm~\ref{alg_hybrid} agree. 
\item[2)] Let $\xopt$ be a stationary point having a neighborhood $\nbhd$ in which $f$ is strongly convex and $\nabla f$ is Lipschitz. 
Suppose that Algorithm~\ref{alg_LBFGS} has generated iterates $(x_k)$ that converge to $\xopt$.
Since there are only finitely many iterates outside of $\nbhd$, it is not difficult to argue that \ref{alg_hybrid} agrees entirely with Algorithm~\ref{alg_LBFGS} (assuming that they choose $H_k^{(0)}$ and $\alpha_k$ in the same way for all $k$) provided that one of the constants $c_0,c_1$ is chosen sufficiently small. However, the required value of these constants depends on the starting point $x_0$ and making
it dependent on the associated level set $\Omega$ but not on $x_0$ would require additional assumptions. An example for such assumptions is given in 3).
This shows that if Algorithm~\ref{alg_LBFGS} converges to a point that satisfies sufficient optimality conditions, then it agrees with Algorithm~\ref{alg_hybrid} for sufficiently small $c_0$ or $c_1$. We mention again that we allow $m=0$ in these algorithms, so this also holds for the \BBB~method. 
We infer further that for appropriate choices of its parameters, \LBFGS~with the classical cautious updating from \cite{LF01} also agrees with \ref{alg_LBFGS} in that situation. 
\item[3)] It is not difficult to show that if $f$ is strongly convex in $\Omega$ with Lipschitz continuous gradients, then
Algorithm~\ref{alg_LBFGS} and Algorithm~\ref{alg_hybrid} agree entirely for any starting point $x_0\in\Omega$, provided one of the constants $c_0,c_1$ in \ref{alg_hybrid} is chosen sufficiently small. 
Here, the required values of $c_0,c_1$ depend on $\Omega$, but not on $x_0$.
\end{enumerate}
\end{remark}

\subsubsection{Main result}

As the main result of this work we prove that if Algorithm~\ref{alg_hybrid} generates a sequence $(x_k)$ with a cluster point near which $f$ is strongly convex and near which it has Lipschitz continuous gradients, then the entire sequence converges to that point and the convergence is linear.
Note that \Cref{thm_lbfgsmislbfgs} applies in this situation, guaranteeing eventually that the scaling factor $\gamma_k$ of the seed matrix can be chosen appropriately and that the $m$ most recent update pairs are stored and used for the computation of $H_k$. 

\begin{theorem}\label{thm_linconv}
Let \Cref{ass_linconv} hold and let $(x_k)$ be generated by Algorithm~\ref{alg_hybrid}. 
Suppose that $(x_k)$ has a cluster point $\xopt$, that $f$ is strongly convex in a convex neighborhood $\nbhd_1$ of $\xopt$, and that $\nabla f$ is Lipschitz in a neighborhood $\nbhd_2\subset\nbhd_1$ of $\xopt$.
Then 
\begin{itemize}
\item $\xopt$ is an isolated local minimizer of $f$; 
\item $(f(x_k))$ converges q-linearly to $f(\xopt)$;
\item $(x_k)$ converges $l$-step q-linearly to $\xopt$ for any sufficiently large $l$; 
\item $(\nabla f(x_k))$ converges $l$-step q-linearly to zero
for any sufficiently large $l$. 
\end{itemize}
More precisely, let $f$ be $\mu$-strongly convex in $\nbhd_1$ and let $\nabla f$ be $L$-Lipschitz in $\nbhd_2$.
Then the numbers $\bar k_i:=\min\{\hat k: x^k\in\nbhd_i\;\forall k\geq\hat k\}$ are well-defined for $i=1,2$, and setting
\begin{equation*}
\nu_k := 1-\frac{2\sigma\alpha_k\mu}{\norm{H_k^{-1}}}	\qquad\forall k\in\N_0
\end{equation*}
we have $\sup_k\nu_k<1$ and $(f(x_k))$ converges q-linearly to $f(\xopt)$ in that
\begin{equation}\label{eq_linconvJ}
f(x_{k+1})-f(\xopt)
\leq \nu_k\bigl[f(x_k)-f(\xopt)\bigr] \qquad \forall k\geq\bar k_1.
\end{equation}
Furthermore, for any $l\in\N_0$ and all $k\geq\bar k_2$ there hold
\begin{equation}\label{eq_msteplinearconv}
\norm{x_{k+l}-\xopt} \leq \sqrt{\kappa\nu^l}\; \norm{x_k-\xopt}\quad\text{and}\quad
\norm{\nabla f(x_{k+l})}\leq \kappa\sqrt{\nu^l} \; \norm{\nabla f(x_k)},
\end{equation}
where $\kappa:=L/\mu$ and $\nu:=\sup_{k\geq\bar k_2}\nu_k$. 
\end{theorem}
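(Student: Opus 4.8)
The plan is to combine the attractivity result of \Cref{lem_aux}, the operator bounds of \Cref{lem_Bkandinverseboundedforconvtopssosc}, and the transition result of \Cref{thm_lbfgsmislbfgs} with a standard strong-convexity energy estimate. First I would invoke \Cref{thm_clusterpointsarestationar} to get $\nabla f(\xopt)=0$, and then use strong convexity of $f$ on $\nbhd_1$ to conclude that $\xopt$ is an isolated local minimizer. Next, \Cref{lem_aux} (whose hypotheses hold since $f$ is strongly convex near the cluster point $\xopt$) gives $\lim_k x_k=\xopt$, so the full sequence eventually enters $\nbhd_1$ and $\nbhd_2$; this makes the numbers $\bar k_1,\bar k_2$ well-defined. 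Moreover, since $\lim_k x_k=\xopt$, $f\vert_{\nbhd_1}$ is strongly convex and $\nabla f\vert_{\nbhd_2}$ is Lipschitz, \Cref{lem_Bkandinverseboundedforconvtopssosc} yields that $(\norm{H_k})$ and $(\norm{H_k^{-1}})$ are bounded.

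To get \cref{eq_linconvJ}, I would run the computation already appearing in the proof of \Cref{lem_aux}: for $k\geq\bar k_1$ the Armijo condition \cref{eq_armijocond} together with $-\nabla f(x_k)^T d_k = \nabla f(x_k)^T H_k\nabla f(x_k)\geq \norm{H_k^{-1}}^{-1}\norm{\nabla f(x_k)}^2$ gives
\begin{equation*}
f(x_k)-f(x_{k+1})\geq \alpha_k\sigma\,\frac{\norm{\nabla f(x_k)}^2}{\norm{H_k^{-1}}}.
\end{equation*}
Strong convexity on $\nbhd_1$ provides the Polyak--Łojasiewicz-type inequality $\norm{\nabla f(x_k)}^2\geq 2\mu\bigl[f(x_k)-f(\xopt)\bigr]$ for $k\geq\bar k_1$. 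Substituting and rearranging yields $f(x_{k+1})-f(\xopt)\leq\nu_k\bigl[f(x_k)-f(\xopt)\bigr]$ with $\nu_k$ as defined. For the \WP~case the Armijo step is part of the \WP~conditions, so this works uniformly; in the backtracking case $\alpha_k\leq 1$ and the same chain holds. To see $\sup_k\nu_k<1$ I need $\inf_k \alpha_k/\norm{H_k^{-1}}>0$: the denominator is bounded above by \Cref{lem_Bkandinverseboundedforconvtopssosc}, and $\alpha_k$ is bounded below by the usual line-search lower bound (for \WP~the curvature condition plus Lipschitzness of $\nabla f$ near $\xopt$ — which holds eventually since $x_k,x_{k+1}\in\nbhd_2$ — forces $\alpha_k\geq\text{const}\cdot(-\nabla f(x_k)^Td_k)/(L\norm{d_k}^2)\geq\text{const}\cdot\norm{H_k}^{-1}\norm{H_k^{-1}}^{-1}$, again bounded below; for backtracking one uses the Armijo-violation argument at $\alpha_k/\beta_1$).

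For \cref{eq_msteplinearconv} I would translate the q-linear decay of $f(x_k)-f(\xopt)$ into estimates on $\norm{x_k-\xopt}$ and $\norm{\nabla f(x_k)}$ via the two-sided quadratic sandwich valid on $\nbhd_2$: from $\mu$-strong convexity, $\frac{\mu}{2}\norm{x_k-\xopt}^2\leq f(x_k)-f(\xopt)$, and from $L$-Lipschitzness of $\nabla f$ and $\nabla f(\xopt)=0$, $f(x_k)-f(\xopt)\leq\frac{L}{2}\norm{x_k-\xopt}^2$ and $\mu\norm{x_k-\xopt}\leq\norm{\nabla f(x_k)}\leq L\norm{x_k-\xopt}$. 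Iterating \cref{eq_linconvJ} over $l$ steps (all indices $\geq\bar k_1\leq\bar k_2$ once $k\geq\bar k_2$) gives $f(x_{k+l})-f(\xopt)\leq\nu^l\bigl[f(x_k)-f(\xopt)\bigr]$ with $\nu=\sup_{k\geq\bar k_2}\nu_k<1$; combining with the sandwich,
\begin{equation*}
\frac{\mu}{2}\norm{x_{k+l}-\xopt}^2\leq\nu^l\bigl[f(x_k)-f(\xopt)\bigr]\leq\nu^l\frac{L}{2}\norm{x_k-\xopt}^2,
\end{equation*}
which is the first inequality in \cref{eq_msteplinearconv} with $\kappa=L/\mu$; the gradient estimate follows the same way from $\norm{\nabla f(x_{k+l})}\leq L\norm{x_{k+l}-\xopt}$ and $\norm{\nabla f(x_k)}\geq\mu\norm{x_k-\xopt}$. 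Finally, $l$-step q-linear convergence of $(x_k)$ and of $(\nabla f(x_k))$ follows by choosing $l$ large enough that $\kappa\nu^l<1$, respectively $\kappa^2\nu^l<1$.

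The main obstacle I anticipate is the uniform lower bound $\inf_k\alpha_k>0$ (equivalently $\sup_k\nu_k<1$): this requires care because the classical line-search lower bounds use Lipschitz continuity of $\nabla f$, which here is only local, so one must first use $\lim_k x_k=\xopt$ to confine both $x_k$ and $x_k+\theta\alpha_k d_k$ to $\nbhd_2$ for large $k$ — harmlessly shrinking to a smaller ball if necessary — and one must verify $\norm{d_k}\to 0$ (from \Cref{cor_BkandBkinversebounded} and $\nabla f(x_k)\to 0$) so that the relevant points stay in $\nbhd_2$. All remaining steps are routine given the lemmas already proved.
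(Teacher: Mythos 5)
Your proposal is correct and follows essentially the same route as the paper: stationarity of $\xopt$ plus \Cref{lem_aux} and \Cref{lem_Bkandinverseboundedforconvtopssosc} for attraction and operator bounds, the Armijo/PL chain for \cref{eq_linconvJ}, a line-search lower bound on $\alpha_k$ (localized to $\nbhd_2$) for $\sup_k\nu_k<1$, and the strong-convexity/Lipschitz sandwich for \cref{eq_msteplinearconv}. The only slip is in the gradient estimate: passing through $\norm{\nabla f(x_k)}\geq\mu\norm{x_k-\xopt}$ yields the constant $\kappa^{3/2}\sqrt{\nu^l}$ rather than the stated $\kappa\sqrt{\nu^l}$; to match the theorem you should instead bound $\norm{\nabla f(x_{k+l})}^2\leq L^2\norm{x_{k+l}-\xopt}^2\leq\frac{2L^2}{\mu}\nu^l\bigl[f(x_k)-f(\xopt)\bigr]\leq\kappa^2\nu^l\norm{\nabla f(x_k)}^2$, i.e.\ route the final step through the PL inequality you already invoked.
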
	

\begin{proof}
\textbf{Part 1: Preliminaries}\\
\Cref{thm_clusterpointsarestationar} implies that 
$\nabla f(\xopt)=0$. 
Together with the strong convexity of $f$ it follows that 
$\xopt$ is the unique local and global minimizer of $f$ in $\nbhd_1$, hence isolated. 
Moreover, \Cref{lem_aux} implies that $(x_k)$ converges to $\xopt$ and \Cref{lem_Bkandinverseboundedforconvtopssosc} thus yields
that $(\norm{H_k})$ and $(\norm{H_k^{-1}})$ are bounded.\\
\textbf{Part 2: Q-linear convergence of $\mathbf{(f(x_k))}$}\\
Setting $h_k:=\norm{H_k^{-1}}^{-1}>0$ we use the Armijo condition and $d_k = -H_k\nabla f(x_k)$ to infer for all $k$ 
\begin{equation*}
\begin{split}
	f(x_{k+1})-f(\xopt)
	& = f(x_{k+1})-f(x_k)+f(x_k)-f(\xopt)\\
	&\leq \sigma\alpha_k\nabla f(x_k)^T d_k + f(x_k)-f(\xopt)\\
	&\leq -\sigma\alpha_k h_k\norm{\nabla f(x_k)}^2 + f(x_k)-f(\xopt).
\end{split}
\end{equation*}
The $\mu$-strong convexity of $f$ implies 
\begin{equation}\label{eq_KLineq}
f(x)-f(\xopt)\leq \frac{1}{2\mu} \norm{\nabla f(x)}^2	
\end{equation}
for all $x\in\nbhd_1$. Thus, for all $k\geq\bar k_1$ we have 
\begin{equation*}
f(x_{k+1})-f(\xopt)
\leq \left(1-2\sigma\alpha_k h_k\mu\right) \bigl[f(x_k)-f(\xopt)\bigr],
\end{equation*}
which proves \cref{eq_linconvJ}. 
The boundedness of $(\norm{H_k^{-1}})$ implies $\inf_k h_k>0$. 
Moreover, there is $\alpha>0$ such that $\alpha_k\geq\alpha$ for all $k$, as is well-known both for the Wolfe--Powell conditions \cite[(3.6)]{S16} and for backtracking with the Armijo condition \cite[proof of Lemma~4.1]{BN89} 
(here we need the Lipschitz continuity of $\nabla f$ in $\nbhd_2$ and the boundedness of $(\norm{H_k})$). 
Together, we infer that $\sup_{k}\nu_k<1$.\\
\textbf{Part 3: Convergence of $\mathbf{(x_k)}$ and $\mathbf{(\nabla f(x_k))}$}\\
The strong convexity yields the validity of \cref{eq_strconvgrc} for all $x\in\nbhd_1$. Together with \cref{eq_linconvJ} 
and the Lipschitz continuity of $\nabla f$ in $\nbhd_2$ we estimate for all $k\geq\bar k_2$ and all $l\in\N_0$ 
\begin{equation}\label{eq_strconvconvest}
\begin{split}
	\frac{\mu}{2}\norm{x_{k+l}-\xopt}^2 
	&\leq f(x_{k+l})-f(\xopt)\\
	&\leq \left[\prod_{j=k}^{k+l-1}\nu_j\right] \bigl[f(x_k)-f(\xopt)\bigr]
	\leq \nu^l\frac{L}{2} \norm{x_k-\xopt}^2,
\end{split}
\end{equation}
where we used that $\nu_j\leq\nu=\sup_{k\geq\bar k_2}\nu_k$ for all $j\geq\bar k_2$. 
Evidently, this implies the left estimate in \cref{eq_msteplinearconv}. 
Since we have established in Part~2 of the proof that $\nu<1$, there is a minimal $l^\ast\in\N$ such that $\sqrt{\kappa\nu^{l^\ast}}\in(0,1)$. Hence, 
$\sqrt{\kappa\nu^l}\in(0,1)$ for any $l\geq l^\ast$, so the left estimate in \cref{eq_msteplinearconv} indeed shows the $l$-step q-linear convergence of $(x_k)$ for any $l\geq l^\ast$.

For $(\nabla f(x_k))$ we infer from the Lipschitz continuity, \cref{eq_strconvconvest} and \cref{eq_KLineq} for all $k\geq\bar k_2$ and all $l\in\N_0$ 
\begin{equation*}
\norm{\nabla f(x_{k+l})}^2 \leq L^2\norm{x_{k+l}-\xopt}^2 
\leq \frac{2L^2}{\mu} \nu^l\bigl[f(x_k)-f(\xopt)\bigr]
\leq \kappa^2 \nu^l \norm{\nabla f(x_k)}^2.
\end{equation*}
This proves the right estimate in \cref{eq_msteplinearconv}. The $l$-step q-linear convergence follows as for $(x_k)$. 
\end{proof}

\begin{remark}\label{rem_thmlinconv}
\phantom{linebreak}
\begin{enumerate}
\item[1)] As mentioned in \Cref{rem_LBFGSMagreeswithLBFGSforsmallconstants}~2) and 3), 
it can be proven that if $f$ is strongly convex with $\nabla f$ Lipschitz that 
Algorithm~\ref{alg_hybrid} agrees with classical \LBFGS~if the constant $c_0$ is sufficiently small, and this also holds
if Algorithm~\ref{alg_LBFGS} generates a sequence $(x_k)$ that converges to some $\xopt$ near which $f$ is strongly convex and has Lipschitz continuous gradients. 
Thus, in these cases \Cref{thm_linconv} applies not only to Algorithm~\ref{alg_hybrid} but also holds for classical \LBFGS. 
In the first case \Cref{thm_linconv} extends and improves the standard result \cite[Theorem~7.1]{LN89} for classical \LBFGS~that shows only r-linear convergence of $(x_k)$, does not include a rate of convergence for $(\nabla f(x_k))$, assumes that $f$ is twice continuously differentiable with bounded second derivatives, and covers only $\CX=\R^n$.
\item[2)] From \cref{eq_linconvJ} and the fact that $f(x_{k+1})-f(\xopt)<f(x_k)-f(\xopt)$ for all $k\leq \bar k_1$ it follows that
there is $\bar\nu\in(0,1)$ such that $f(x_{k+1})-f(\xopt)\leq\bar\nu\left[f(x_k)-f(\xopt)\right]$ for all $k\in\N_0$.
Hence, Algorithm~\ref{alg_hybrid} generally consists of two phases: First, in the \emph{global phase}, lasting from iteration $k=0$ to at most iteration $k=\bar k_2$, the objective function decays q-linearly but we have no control over the errors $\norm{x_k-\xopt}$ and $\norm{\nabla f(x_k)}$. Second, in the \emph{local phase}, starting at iteration $k=\bar k_2$ or earlier, the errors $\norm{x_k-\xopt}$ and $\norm{\nabla f(x_k)}$ 
become $l$-step q-linearly convergent for any sufficiently large~$l$. 
Somewhere in between, specifically starting at or before iteration $k=\bar k_1$, the errors in the objective start to satisfy \cref{eq_linconvJ}. 
Note that the behavior of the algorithm in the local phase is closely related to the \emph{local condition number} $\kappa=L/\mu$
\cite{N18}. 
If $\Omega$ is convex and $f$ is strongly convex with Lipschitz gradient in $\Omega$, then there is no global phase since $\bar k_2=0$. As in 1) this also holds for classical \LBFGS.
\item[3)] The estimates in \cref{eq_msteplinearconv} are still meaningful if $l$ is such that $\sqrt{\kappa\nu^l}$, respectively, $\kappa\sqrt{\nu^l}$ is larger than one
because they limit the \emph{increase} of $\norm{x_{k+l}-\xopt}$ in comparison to $\norm{x_{k}-\xopt}$, respectively, of $\norm{\nabla f(x_{k+l})}$ in comparison to $\norm{\nabla f(x_k)}$. For $l=1$ we infer that there is a constant $C>0$ such that 
the quotients $\norm{x_{k+1}-\xopt}/\norm{x_k-\xopt}$ and $\norm{\nabla f(x_{k+1})}/\norm{\nabla f(x_{k})}$ are bounded from above by $C$. This is generally not true for r-linear convergence.
\item[4)] Note that $l$-step q-linear convergence for some $l\in\N$ does not generally imply $j$-step q-linear convergence for $j>l$. For example, let $0<a<b<1$ and consider the sequence $\tau_{2n-1}:=a^n$, $\tau_{2n}:=b^n$. This sequence converges $l$-step q-linearly if and only if $l$ is even.
\end{enumerate}
\end{remark}
		

\section{Numerical experiments}\label{sec_experiments}

Before we present the numerical experiments that follow, let us stress that in all experiments, including those that are not reported, we have consistently found that \ref{alg_hybrid} agrees \emph{entirely} with the classical \LBFGS/\BBB~method Algorithm~\ref{alg_LBFGS} for moderately small values of $c_0$. 
This suggests that \LBFGS~is inherently cautious, which has also been observed for BFGS with cautious updates \cite{LiFu01}.
We may also recall from \Cref{rem_LBFGSMagreeswithLBFGSforsmallconstants} that for any finite set of starting points such that the classical method only converges to points that satisfy sufficient optimality conditions, there is $c_0>0$ such that the two methods agree entirely. 
Additionally, let us note that whenever \ref{alg_hybrid} agrees entirely with \ref{alg_LBFGS}, then
these two algorithms are also identical to \LBFGS/\BBB~with standard cautious updating \cite{LF01} for appropriate parameters. 

In the following experiments, we choose $c_0$ so small that \ref{alg_hybrid} and \ref{alg_LBFGS} agree. 
Since the literature contains ample numerical experiments for \LBFGS, 
we consider only three test problems for \ref{alg_hybrid} to illustrate some of the new theoretical results. 
Recent works that include numerical experiments for \LBFGS~are, for instance, 
\cite{A22,KS23}.

The values of $c_0,c_1,c_2$ are specified in \Cref{tab_paramvalues}.
We use $\gamma_k=\gamma_k^\Hy$ for all $k$, cf. Definition~\ref{def_tau_B}.
The computation of $-H_k\nabla f(x_k)$ in Line~\ref{line_tlr} is realized in a matrix free way through the two-loop recursion \cite[Algorithm~7.5]{NW06}. 
Algorithm~\ref{alg_hybrid} terminates when it has generated an $x_k$ that satisfies $\norm{\nabla f(x_k)}\leq 10^{-9}$ in the first and third example, respectively, $\norm{\nabla f(x_k)}\leq 10^{-5}$ in the second. 
Regarding line search strategies, we use Armijo with backtracking by the fixed factor $\beta:=\beta_1=\beta_2=\frac12$ in all examples. 
In the first and third example we additionally apply the well known \MT~line search \cite{MT94} from \textsc{Poblano} \cite{Poblano}, which ensures that the strong Wolfe--Powell conditions are satisfied. 
In the second example we replace the \MT~line search by another one; details are discussed in that example. 
The parameter values of the line searches are included in \Cref{tab_paramvalues}.
The experiments are conducted in \textsc{MATLAB}~2023b. 
The code for the first example is available on \textsc{arXiv} with the \href{https://doi.org/10.48550/arXiv.2401.03805}{preprint of this article}. It includes \ref{alg_hybrid} and \ref{alg_LBFGS}. 

\begin{table}
	\small
	\centering
	\begin{tabular}{c|c|c|c|c|c|c|c|c|c|c}
		\toprule
		\multicolumn{3}{c|}{Algorithm~\ref{alg_hybrid}} & \multicolumn{2}{c|}{Armijo} & \multicolumn{6}{c}{\WP~\& Moré--Thuente (Poblano)} \\
		\midrule
		$c_0$ & $c_1$ & $c_2$ & $\sigma$ & $\beta$ & 
		$\sigma$ & $\eta$ 
		& $maxfev$ & $stpmax$ & $stpmin$ & $xtol$\\
		\midrule
		$10^{-4}$ & $1$ & $2m+3$ & $10^{-4}$ & $0.5$ & $10^{-4}$ & $0.9$ &
		$20$ & $1000$ & $0$ & $10^{-7}$\\
		\bottomrule
	\end{tabular}
	\caption{Parameter values for Algorithm~\ref{alg_hybrid} and the line searches}
	\label{tab_paramvalues}
\end{table}

Next we define some quantities for the evaluation of the numerical results. 
Suppose that Algorithm~\ref{alg_hybrid} terminates for $k=K$ in the modified Line~\ref{line_term}. 
It has then generated iterates $x_0,x_1,\ldots,x_K$, step sizes $\alpha_0,\alpha_1,\ldots,\alpha_{K-1}$ and has taken $K$ iterations if we count $k=0$ as the first iteration and do not count the incomplete iteration for $k=K$.
The number of iterations in which $(s_k,y_k)$ is added to the storage is $\CP:=\lvert\{k\in\{0,\ldots,K-1\}: y_k^T s_k>0 \}\rvert$.
Note that the maximal value of $\CP$ is $K$ and that for the \MT~line search there holds $\CP=K$. 
By $\alpha_{\min}$ and $\alpha_{\max}$ we denote the smallest, respectively, largest step size that is used during the course of Algorithm~\ref{alg_hybrid}.
Moreover, we let
\begin{equation*}
		Q_{f}:=\max_{1\leq k\leq K}\left\{\frac{f(x_k)-\fopt}{f(x_{k-1})-\fopt}\right\},
		\quad
		Q_x:=\max_{1\leq k\leq K}\left\{\frac{\norm{x_k-\xopt}}{\norm{x_{k-1}-\xopt}}\right\}, \quad
		Q_{g}:=\max_{1\leq k\leq K}\left\{\frac{\norm{\nabla f(x_k)}}{\norm{\nabla f(x_{k-1})}}\right\}
\end{equation*}
denote the maximal q-factors of the respective sequences.
To assess the asymptotic of the q-factors, we also consider 
a variant in which $\max_{1\leq k\leq K}$ is replaced by $\max_{K-2\leq k\leq K}$, 
i.e., where the maximum is only taken wrt. the final three quotients.
This variant is denoted by $Q_f^3$, $Q_x^3$ and $Q_g^3$, respectively.

\subsection{Example~1: The Rosenbrock function}

As a classical example we consider the Rosenbrock function $f:\R^2\rightarrow\R$, $f(x):=(1-x_1)^2+100(x_2-x_1^2)^2$,
with unique global minimizer $\xopt=(1,1)^T$ that is also the unique stationary point of $f$. 
It is straightforward to confirm that $f$ is strongly convex in the square $[-0.5,1.4]^2$ surrounding $\xopt$. 
Since every level set of $f$ is compact, $\nabla f$ is Lipschitz continuous in $\Omega$ regardless of the starting point.
However, $f$ is not convex in the level set associated to the starting point $x_0=(-1.2,1)^T$ that we use, so there is no result available that guarantees convergence of the classical \LBFGS~method to $\xopt$ from this starting point.
For \ref{alg_hybrid}, in contrast, \Cref{thm_clusterpointsarestationar} shows that $(x_k)$ converges to $\xopt$ for any $x_0\in\R^2$,
and \Cref{thm_linconv} further implies that if $c_2<1/(2m+2)$, 
convergence is either finite or at least linear.
In addition, \Cref{thm_lbfgsmislbfgs} ensures that \ref{alg_hybrid} turns into \LBFGS~as $\xopt$ is approached.
\Cref{tab_TP1} and \Cref{fig_TP1} show the numerical results of Algorithm~\ref{alg_hybrid}.

\begin{table}
	\scriptsize
	\centering
	\begin{tabular}{cc|c|c|c|c|c|c|c|c|c}
		& & \#it  & $\#f$ &  $\CP$ & $\alpha=1$ & $\alpha_{\max}$ & $\alpha_{\min}$ & $Q_f/Q_f^3$ & $Q_x/Q_x^3$ & $Q_g/Q_g^3$ \\
		\hline
		Armijo & $(m=0)$ & 82 & 129 & 78 & 62 & $1$ & $5\mathrm{e}{-4}$ & $0.9993$/$0.9992$ & $1.03$/$0.9996$ & $40$/$0.9996$ \\
		\MT & ($m=0$)
		& 4121 & 8252 & 4121 & 2057 & $341$ & $1\mathrm{e}{-3}$ & $0.9992$/$0.998$ & $1.02$/$0.9996$ & $15$/$2$ \\
		\hline
		Armijo & $(m=1)$ 
		& 90 & 154 & 89 & 71 & 1 & $1\mathrm{e}{-3}$ &  $0.9992$/$0.75$ & $2$/$1.04$ & $6$/$1.6$ \\
		\MT & ($m=1$)
		& 46 & 84 & 46 & 21 & $341$ & $1\mathrm{e}{-3}$ & $0.9992$/$0.37$ & $2$/$0.61$ & $8$/$0.61$ \\
		\hline
		Armijo & $(m=2)$ 
		& 42 & 90 & 42 & 29 & 1 & $1\mathrm{e}{-3}$ &  $0.9992$/$0.19$ & $1.02$/$0.43$ & $12$/$0.44$ \\
		\MT & ($m=2$)
		& 40 & 61 & 40 & 25 & $9$ & $1\mathrm{e}{-3}$ & $0.9991$/$0.16$ & $2$/$0.40$ & $4$/$0.42$ \\
		\hline
		Armijo & $(m=3)$ 
		& 46 & 89 & 45 & 29 & 1 & $1\mathrm{e}{-3}$ &  $0.9992$/$0.77$ & $2$/$2$ & $8$/$0.75$ \\
		\MT & ($m=3$)
		& 43 & 65 & 43 & 27 & $21$ & $1\mathrm{e}{-3}$ & $0.9992$/$0.017$ & $9$/$0.18$ & $9$/$0.11$ \\
		\hline
		Armijo & $(m=4)$ 
		& 60 & 114 & 59 & 39 & $1$ & $1\mathrm{e}{-3}$ & $0.9992$/$0.77$ & $12$/$0.88$ & $9$/$0.88$ \\
		\MT & ($m=4$)
		& 51 & 73 & 51 & 33 & $5$ & $1\mathrm{e}{-3}$ & $0.9992$/$0.036$ & $3$/$0.063$ & $6$/$0.34$ \\
	\end{tabular}
	\caption{Results of \ref{alg_hybrid} for the Rosenbrock function. Here, \#it provides the number of iterations, \#$f$ the number of evaluations of $f$, 
		$\CP$ the number of iterations in which the storage is updated, 
		and $\alpha=1$ the number of iterations with $\alpha_k=1$. 
		Since Algorithm~\ref{alg_hybrid} agrees with Algorithm~\ref{alg_LBFGS} in this example, the results for standard \LBFGS~($m>0$) and the \BBB~method ($m=0$) are identical to the ones depicted in the table.}
	\label{tab_TP1}
\end{table}

\Cref{tab_TP1} shows, among others, that using $m=0$ with a strong Wolfe--Powell line search can be disastrous; the convergence is comparable to that of steepest descent (not shown). Indeed, it is well understood that monotone line searches can be too restrictive for the \BB~method, cf. for instance the convergence analysis in \cite{AK20}. For this reason, the \BBB~method is typically combined with a non-monotone line search \cite{R97,GS02,DF05}. We thus applied \ref{alg_hybrid} for $m=0$ with the non-monotone Armijo line search of Grippo et al. \cite{GLL86}, yielding an improvement over 
monotone Armijo. 
Specifically, with the best choice of parameters, 71 iterations and 82 evaluations of $f$ are required.
A further observation in \Cref{tab_TP1} is that for $m>0$ the q-factor is usually significantly smaller during the final 3 iterations than in the previous iterations, suggesting an acceleration in convergence; \Cref{fig_TP1} confirms this observation. 

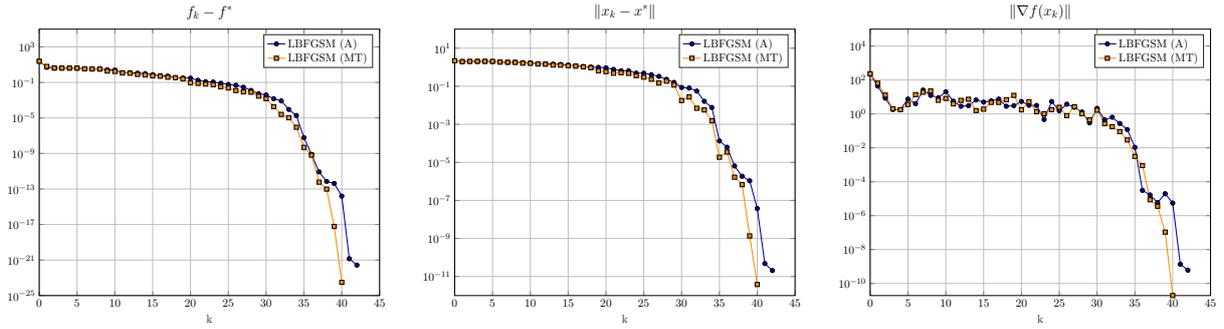
\begin{figure}
	\centering
	\scalebox{0.39}{
		\definecolor{mycolor1}{rgb}{1.00000,0.55000,0.00000}
		\begin{tikzpicture}
			\begin{axis}[
				width=4.521in,
				height=3.566in,
				at={(0.758in,0.481in)},
				scale only axis,
				xmin=0,
				xmax=45,
				xlabel style={font=\color{white!15!black}},
				xlabel={k},
				ymode=log,
				ymin=1e-25,
				ymax=100000,
				yminorticks=true,
				axis background/.style={fill=white},
				title style={font=\bfseries},
				title={\Large $f_k-f^\ast$},
				xmajorgrids,
				ymajorgrids,
				yminorgrids,
				legend style={legend cell align=left, align=left, draw=white!15!black}
				]
				\addplot [color=black!35!blue, line width=1pt, mark=*, mark options={solid, fill=black!35!blue, draw=black}]
				table[row sep=crcr]{
					0	24.2\\
					1	5.10111266371095\\
					2	4.15378842726836\\
					3	4.11721503664523\\
					4	4.11381682449055\\
					5	3.99779331500671\\
					6	3.41929207690587\\
					7	3.29230084302145\\
					8	3.22015480506825\\
					9	2.62327925577181\\
					10	2.47384884410594\\
					11	1.28389061169424\\
					12	1.2592999970055\\
					13	1.10261581687294\\
					14	0.991079874445112\\
					15	0.736431805326911\\
					16	0.553034893921661\\
					17	0.522107753707644\\
					18	0.347914596652493\\
					19	0.319521474559929\\
					20	0.303289893178952\\
					21	0.174905631518707\\
					22	0.120055284731831\\
					23	0.111403497025955\\
					24	0.0802509081516265\\
					25	0.0584822510930375\\
					26	0.0476765749985289\\
					27	0.0288618339483577\\
					28	0.0126009149323327\\
					29	0.00542548336769723\\
					30	0.003963879340115\\
					31	0.00144095700061454\\
					32	0.000842098796687885\\
					33	8.99156894456188e-05\\
					34	1.88271690231954e-05\\
					35	5.99389980575334e-08\\
					36	7.78559785979648e-10\\
					37	8.3987634523441e-12\\
					38	7.01342941471587e-13\\
					39	4.24634672336428e-13\\
					40	1.54692297275873e-14\\
					41	1.40610019566489e-21\\
					42	2.68826256288027e-22\\
				};
				\addlegendentry{LBFGSM (A)}
				\addplot [color=mycolor1, line width=1pt, mark=square*, mark options={solid, fill=mycolor1, draw=black}]
				table[row sep=crcr]{
					0	24.2\\
					1	6.32525573129495\\
					2	4.19633029873649\\
					3	4.11491338388386\\
					4	4.1113289433646\\
					5	4.09213404158842\\
					6	3.37016426416014\\
					7	3.2769509337776\\
					8	3.19460384231008\\
					9	1.90405996754526\\
					10	1.73109468926301\\
					11	1.16844567300691\\
					12	1.08773968277945\\
					13	0.799376305695221\\
					14	0.689684346025524\\
					15	0.548399742081302\\
					16	0.499767660206868\\
					17	0.388265299925242\\
					18	0.338731391008586\\
					19	0.250105576091097\\
					20	0.0898876680251974\\
					21	0.0767939568368226\\
					22	0.0648937307287435\\
					23	0.0548271475798122\\
					24	0.0319079697486555\\
					25	0.0239306619117538\\
					26	0.012129031985988\\
					27	0.00861995708652493\\
					28	0.00790948368668645\\
					29	0.00287402943071791\\
					30	0.00147521128108276\\
					31	0.000187394288165683\\
					32	2.53516520408067e-05\\
					33	1.05726138526238e-05\\
					34	8.95568491967773e-07\\
					35	4.84439403070859e-09\\
					36	6.45695018563336e-10\\
					37	5.88383789748604e-13\\
					38	9.6825504959948e-14\\
					39	6.05683262769979e-18\\
					40	3.07496192359567e-24\\
				};
				\addlegendentry{LBFGSM (MT)}
			\end{axis}
		\end{tikzpicture}
	}
	\hfill
	\scalebox{0.39}{
		\definecolor{mycolor1}{rgb}{1.00000,0.55000,0.00000}
		\begin{tikzpicture}	
			\begin{axis}[
				width=4.521in,
				height=3.566in,
				at={(0.758in,0.481in)},
				scale only axis,
				xmin=0,
				xmax=45,
				xlabel style={font=\color{white!15!black}},
				xlabel={k},
				ymode=log,
				ymin=1e-12,
				ymax=100,
				yminorticks=true,
				axis background/.style={fill=white},
				title style={font=\bfseries},
				title={\Large $\|x_k-x^\ast\|$},
				xmajorgrids,
				ymajorgrids,
				yminorgrids,
				legend style={legend cell align=left, align=left, draw=white!15!black}
				]
				\addplot [color=black!35!blue, line width=1pt, mark=*, mark options={solid, fill=black!35!blue, draw=black}]
				table[row sep=crcr]{
					0	2.2\\
					1	1.99130836147456\\
					2	2.02236413755431\\
					3	2.02886702301742\\
					4	2.02764380640284\\
					5	1.98444013233506\\
					6	1.87017435920913\\
					7	1.78897580155473\\
					8	1.82628362173105\\
					9	1.7342858647109\\
					10	1.67032649078575\\
					11	1.50104295405528\\
					12	1.4962796170711\\
					13	1.45211116475959\\
					14	1.39389002689362\\
					15	1.28830048620941\\
					16	1.16008593999155\\
					17	1.0895693943888\\
					18	1.01189602611497\\
					19	0.978033246873187\\
					20	0.927858638914651\\
					21	0.765247891567089\\
					22	0.649793163272342\\
					23	0.64917165784008\\
					24	0.489684851843272\\
					25	0.483367084029448\\
					26	0.403051369335357\\
					27	0.330979312525608\\
					28	0.230797162172889\\
					29	0.15932120693653\\
					30	0.0863250890681971\\
					31	0.0805890697637356\\
					32	0.0557122056953063\\
					33	0.0161669702083034\\
					34	0.00763893059666378\\
					35	0.000133755170805377\\
					36	6.24349071396726e-05\\
					37	6.4333309869341e-06\\
					38	1.85119086761766e-06\\
					39	1.07744356592602e-06\\
					40	3.74927654764878e-08\\
					41	4.81791795120844e-11\\
					42	2.06353500615171e-11\\
				};
				\addlegendentry{LBFGSM (A)}
				\addplot [color=mycolor1, line width=1pt, mark=square*, mark options={solid, fill=mycolor1, draw=black}]
				table[row sep=crcr]{
					0	2.2\\
					1	1.97075077260364\\
					2	2.01779451428321\\
					3	2.02861525927003\\
					4	2.02690953807918\\
					5	2.01804783285175\\
					6	1.85098763826329\\
					7	1.8183486264428\\
					8	1.78850787220779\\
					9	1.62804810593964\\
					10	1.59600037285774\\
					11	1.47181643385262\\
					12	1.43499846682005\\
					13	1.29320659463801\\
					14	1.27982794719891\\
					15	1.19222127288571\\
					16	1.13319930442152\\
					17	1.03302726167718\\
					18	0.943901192800317\\
					19	0.638012316742387\\
					20	0.584639598674546\\
					21	0.481898445773723\\
					22	0.50832921950867\\
					23	0.472914227950492\\
					24	0.360198328180376\\
					25	0.299463326653393\\
					26	0.23231372163334\\
					27	0.148466577455089\\
					28	0.184785123436609\\
					29	0.115306161915432\\
					30	0.0177463551261582\\
					31	0.0274123066282381\\
					32	0.0069626618921099\\
					33	0.00568207092124757\\
					34	0.00152982299522676\\
					35	1.85927610347522e-05\\
					36	3.48152001640107e-05\\
					37	1.6620155730315e-06\\
					38	6.72958636866253e-07\\
					39	1.36010621660648e-09\\
					40	3.80395310700029e-12\\
				};
				\addlegendentry{LBFGSM (MT)}
			\end{axis}
		\end{tikzpicture}
	}
	\hfill
	\scalebox{0.39}{
		\definecolor{mycolor1}{rgb}{1.00000,0.55000,0.00000}
		\begin{tikzpicture}
			\begin{axis}[
				width=4.521in,
				height=3.566in,
				at={(0.758in,0.481in)},
				scale only axis,
				xmin=0,
				xmax=45,
				xlabel style={font=\color{white!15!black}},
				xlabel={k},
				ymode=log,
				ymin=1.93442497601161e-11,
				ymax=100000,
				yminorticks=true,
				axis background/.style={fill=white},
				title style={font=\bfseries},
				title={\Large $\|\nabla f(x_k)\|$},
				xmajorgrids,
				ymajorgrids,
				yminorgrids,
				legend style={legend cell align=left, align=left, draw=white!15!black}
				]
				\addplot [color=black!35!blue, line width=1pt, mark=*, mark options={solid, fill=black!35!blue, draw=black}]
				table[row sep=crcr]{
					0	232.867687754227\\
					1	43.8985209232249\\
					2	8.46978756375433\\
					3	1.82532681335941\\
					4	1.77476204348877\\
					5	7.48350563094842\\
					6	4.00066636700262\\
					7	25.4848945441268\\
					8	12.3542246281225\\
					9	8.85933461374664\\
					10	20.2608644705895\\
					11	5.61882715933337\\
					12	2.82044085839459\\
					13	3.06929406979361\\
					14	6.64355738577405\\
					15	5.00746269787819\\
					16	5.86476652318916\\
					17	7.4908793490463\\
					18	2.83198096970534\\
					19	3.06266922954256\\
					20	5.44602614474888\\
					21	3.22788050273954\\
					22	3.15338030675841\\
					23	0.467491716882428\\
					24	5.42675231615367\\
					25	1.52873308443467\\
					26	3.74934070856063\\
					27	2.49185036397093\\
					28	1.29914886940003\\
					29	0.298806709976806\\
					30	2.1414686988539\\
					31	0.448141473136954\\
					32	0.636132488078867\\
					33	0.27240957133411\\
					34	0.119347092363002\\
					35	0.0106278984142371\\
					36	3.08552148344568e-05\\
					37	1.66127561309811e-05\\
					38	5.89314473202273e-06\\
					39	1.96625148651745e-05\\
					40	5.51706932738594e-06\\
					41	1.3745295401199e-09\\
					42	6.06960253007594e-10\\
				};
				\addlegendentry{LBFGSM (A)}
				\addplot [color=mycolor1, line width=1pt, mark=square*, mark options={solid, fill=mycolor1, draw=black}]
				table[row sep=crcr]{
					0	232.867687754227\\
					1	64.7722716718182\\
					2	12.8227963861207\\
					3	2.01301101950666\\
					4	1.77714632827451\\
					5	3.59447220957523\\
					6	13.7747826251693\\
					7	18.9989402748628\\
					8	22.4467796919006\\
					9	6.35974264635293\\
					10	8.21732739604537\\
					11	3.90718510268854\\
					12	6.29340123970513\\
					13	7.3548104276944\\
					14	1.58011954331079\\
					15	1.90809228826357\\
					16	4.78124349680322\\
					17	4.77958774020289\\
					18	6.91328529696125\\
					19	12.4501259591309\\
					20	1.75010510071367\\
					21	5.26785150763035\\
					22	1.35105125200025\\
					23	1.0202168154446\\
					24	1.778245388107\\
					25	2.45539486345997\\
					26	0.787899588960235\\
					27	2.67584595407174\\
					28	1.02617263155891\\
					29	0.441605047785429\\
					30	1.67354856217282\\
					31	0.265609133682724\\
					32	0.176686125385063\\
					33	0.0906357232776311\\
					34	0.0292774786924001\\
					35	0.00309355097540646\\
					36	0.000899503392002192\\
					37	8.6140622950244e-06\\
					38	3.59011583940957e-06\\
					39	1.06761121224269e-07\\
					40	1.93442497601161e-11\\
				};
				\addlegendentry{LBFGSM (MT)}
			\end{axis}
		\end{tikzpicture}
	}
	\caption{\Ref{alg_hybrid} with $m=2$ for the Rosenbrock function. The results for Algorithm~\ref{alg_LBFGS} with $m=2$ are identical.}
	\label{fig_TP1}
\end{figure}

Next we comment on \Cref{fig_TP1}. Since $f$ is strongly convex for any $x\in\R^2$ with $\norm{x-\xopt}\leq 0.4$, 
the error plot for $\norm{x_k-\xopt}$ in \Cref{fig_TP1} in combination with \Cref{thm_linconv} indicates that $l$-step q-linear convergence of $(x_k)$ and $(\nabla f(x_k))$ is guaranteed from around iteration $k_1=k_2=26$ onward for Armijo and from $k_1=k_2=24$ onward for \MT; cf. also \Cref{rem_thmlinconv}~2).
In this regard we note that for $k\leq k_1$ the plot of $(\norm{\nabla f(x_k)})$ contains many pairs $(x_{k-l},x_k)$ with $\norm{\nabla f(x_{k-l})}\leq \norm{\nabla f(x_k)}$ for several $l$,
ruling out $l$-step q-linear convergence, while for $k\geq k_1$ we presumably see $l$-step q-linear convergence for any $l\geq 4$ for Armijo, respectively, for any $l\geq 5$ in case of \MT. 
Yet, $5$-step q-linear convergence is violated for 
$k=k_1-1$ since $\norm{\nabla f(x^{30})}>\norm{\nabla f(x^{25})}$ for Armijo and $\norm{\nabla f(x^{28})}>\norm{\nabla f(x^{23})}$ for \MT, where the inequalities are based on the numerical values underlying the figure.

\subsection{Example~2: A piecewise quadratic function}

To demonstrate that \ref{alg_hybrid} works on objectives that are $C^{1,1}$ but not $C^2$, we let $N\in\N$, $d:=3N$ and consider the piecewise quadratic function $f:\R^d\rightarrow\R$, $f(x):=\frac12 \norm{x-b}^2+\frac{99}{2}\sum_{i=1}^{d}\max\{0,x_i\}^2$, 
where $b = (f,f,\ldots,f)^T\in\R^d$ with $f=(1,-1,0)$. 
This objective is strongly convex, every level set is bounded, and it is $C^1$ with $\nabla f(x) = x-b + 99\max\{0,x\}$, where $\max$ is applied componentwise. 
It is clear that $\nabla f$ is Lipschitz in $\R^d$, but not differentiable at $\xopt=(y,y,\ldots,y)^T$ with $y=(0.01,-1,0)$, the unique stationary point of $f$.
As in the first example it follows from \Cref{thm_clusterpointsarestationar} and \Cref{thm_linconv}
that for any starting point Algorithm~\ref{alg_hybrid} either terminates finitely or it generates linearly convergent sequences. 
In contrast to the first example we have $k_1=k_2=0$. In particular, the q-linear convergence estimate \cref{eq_linconvJ} holds for all $k$. 
The convergence behavior of classical~\LBFGS~is exactly the same, but this cannot be inferred from existing results such as \cite[Theorem~7.1]{LN89} since $f$ is not twice continuously differentiable. Instead, it follows from the results of this work, cf. the discussion in \Cref{rem_thmlinconv}~1).

Applying Algorithm~\ref{alg_hybrid} with starting point $x_0=b$ for $N=100$ yields the results displayed in \Cref{tab_TP2}. 
We point out that in this example we do not use the \MT~line search but resort to a line search that ensures the weak \WP~conditions instead. The reason for not using the \MT~line search is that it involves quadratic and cubic interpolation for $f$, which is not appropriate since $f$ is only piecewise smooth. 

\begin{table}
	\footnotesize
	\centering
	\begin{tabular}{cc|c|c|c|c|c|c|c|c|c}
		& & \#it & $\#f$ & $\CP$ & $\alpha=1$ & $\alpha_{\max}$ & $\alpha_{\min}$ & $Q_f/Q_f^3$ & $Q_x/Q_x^3$ & $Q_g/Q_g^3$ \\
		\hline
		Armijo & $(m=0)$ & 10 & 23 & 10 & 3 & $1$ & $0.06$ & $0.88$/$0.13$ & $5.3$/$0.32$ & $3.2$/$0.55$   \\
		\WP & ($m=0$)
		& 8 & 23 & 8 & 1 & 2 & $0.03$ & $0.77$/$0.75$ & $5.3$/$0.87$ & $0.88$/$0.87$\\
		\hline
		Armijo & $(m=5)$ 
		& 11 & 45 & 11 & 2 & 1 & 0.02 & $0.88$/$0.32$ & $5.3$/$0.56$ & $3.2$/$0.56$ \\
		\WP & ($m=5$)
		& 11 & 49 & 11 & 1 & 2 & 0.02 & $0.77$/$0.32$ & $5.3$/$0.56$ & $0.88$/$0.56$ \\
		\hline
		Armijo & $(m=10)$ & 10 & 23 & 10 & 3 & $1$ & $0.06$ & $0.88$/$0.13$ & $5.3$/$0.32$ & $3.2$/$0.55$   \\
		\WP & ($m=10$)
		& 8 & 23 & 8 & 1 & 2 & $0.03$ & $0.77$/$0.75$ & $5.3$/$0.87$ & $0.88$/$0.87$\\
	\end{tabular}
	\caption{Results of \ref{alg_hybrid} for Example~2, a strongly convex and piecewise quadratic objective.
		The results for standard \LBFGS~($m>0$) and the \BBB~method ($m=0$) are identical to those in the table.}
	\label{tab_TP2}
\end{table}

While \Cref{tab_TP2} does not reveal this information, \ref{alg_hybrid} finds the \emph{exact} solution $\xopt$ in the displayed runs. 
It is also interesting that the iterates for $m=0$ and $m=10$ agree if the same line search is used. 
The combination of $m=0$ with a non-monotone Armijo line search (not shown) does not improve the performance in this example. 
The gap between $Q_f$ and 1 is much larger than in Example~1, which we attribute to the fact that \cref{eq_linconvJ} holds for all $k$. 

To check for global convergence we conduct,
for each of the memory sizes and line searches displayed in \Cref{tab_TP2}, 
$10^5$ runs of Algorithm~\ref{alg_hybrid} with random starting points generated by Matlab's \texttt{randn}.
The gradient norm is successfully decreased below $10^{-5}$ in all runs. 
The average number of iterations for $m=0$ is 98.9 for Armijo and 227.7 for \WP, for $m=5$ it is 83.0 for Armijo and 82.4 for \WP, and for $m=10$ it is 92.5 for Armijo and 92.3 for \WP.

\subsection{Example~3: PDE-constrained optimal control}

To illustrate that the results of this work are valid in infinite dimensional Hilbert space, 
we consider a nonconvex large-scale problem from PDE-con\-strained optimal control. 
Recently, \BB-type methods have been applied to and studied for this problem class \cite{DKPS15,LMP21,AK22}.
Numerical studies involving \LBFGS--type methods for PDE-constrained optimal control problems are available in 
\cite{NVM16,CD18,MR21,FVM22}, for instance. Besides the present work, the convergence theory of \LBFGS--type methods in Hilbert space is only addressed in our paper \cite{AMM23}. We consider the problem 
\begin{equation*}
	\min_{u\in L^2(\Omega)} \frac12\norm{y_u-y_d}^2_{L^2(\Omega)} + \frac{\nu}{2}\norm{u}^2_{L^2(\Omega)},
\end{equation*}
where $\Omega=(0,1)^2\subset\R^2$, $y_d\in L^2(\Omega)$, $\nu>0$ and $y=y_u$ denotes the solution to the semilinear elliptic boundary value problem
\begin{equation*}
	\left\{
	\begin{aligned}
		-\Delta y + \exp(y) &= u && \text{ in }\Omega,\\
		y &= 0 && \text{ on }\partial\Omega.
	\end{aligned}\right.
\end{equation*}
It can be shown by standard arguments that for every $u\in L^2(\Omega)=:U$ there is a unique weak solution $y_u\in H_0^1(\Omega)\cap C(\bar\Omega)=:Y$ to this PDE and that the mapping $u\mapsto y_u$ is smooth from $U$ to $Y$, cf. \cite{CRT08} and the references therein. 
Regarding $y_u$ as a function of $u$, the objective $f(u):=\frac12\norm{y_u-y_d}^2_{L^2(\Omega)} + \frac{\nu}{2}\norm{u}^2_{L^2(\Omega)}$
defined on the Hilbert space $U$ is smooth and it admits a global minimizer \cite{CRT08}.
Since $u\mapsto y_u$ is nonlinear, $f$ is nonconvex. 

We choose $\nu=10^{-3}$ and $y_d(x_1,x_2)=\sin(2\pi x_1)\cos(2\pi x_2)$ and
we discretize the Laplacian by the classical 5-point stencil on a uniform grid with $M+1=2^{j}+1$, $4\leq j\leq 11$ points in each direction of the grid. 
The discretization of the control $u$ and the state $y$, denoted $u_h$ and $y_h$, live in $\CX=\R^N$ with $N=(M-1)^2$. Its entries represent function values at the $(M-1)^2$ inner nodes of the grid. 
We obtain the discretization $f_h$ of $f$ by replacing integrals $\int_\Omega v\,\mathrm{d}x$ by $h^2\sum_{i=1}^N [v_h]_i$, where $[v_h]_i$ indicates the $i$-th component of $v_h$ and $h:=1/M$ is the mesh width of the grid. To mimic the $L^2$ inner product, we endow $\R^N$ with the scalar product $(u_h,v_h):=h^2\sum_{i=1}^N ([u_h]_i \cdot [v_h]_i)$. 
Note that this differs from the Euclidean inner product, which has to be taken into account, for instance when computing $B_{k+1}$ (respectively, when applying the two-loop recursion) since formulas \cref{eq_defLBFGSupd2} and \cref{eq_defLBFGSupd} involve the scalar product, cf. also the paragraph \emph{Notation} at the end of Section~\ref{sec_intro}.
From now on all quantities are discrete, so we suppress the index $h$. 
In every iteration we compute the discrete state $y_{u_k}$ associated to the discrete control $u_k$ by a few iterations of a damped Newton's method.
Since the exact solution of the problem is unknown, we run Algorithm~\ref{alg_hybrid} to obtain $u_k$ satisfying $\norm{\nabla f(u_k)}\leq 10^{-12}$ and use it in place of an exact solution. 
The desired state $y_d$, the optimal state $\bar y:=y_{\bar u}$ and the optimal control $\bar u$ are depicted in \Cref{fig_TP3_graphics}. 
The results obtained with starting point $u_0=0$ are displayed in \Cref{tab_TP3} and \Cref{tab_TP3b}. 

\begin{figure}
	\centering
	\includegraphics[width=0.32\linewidth]{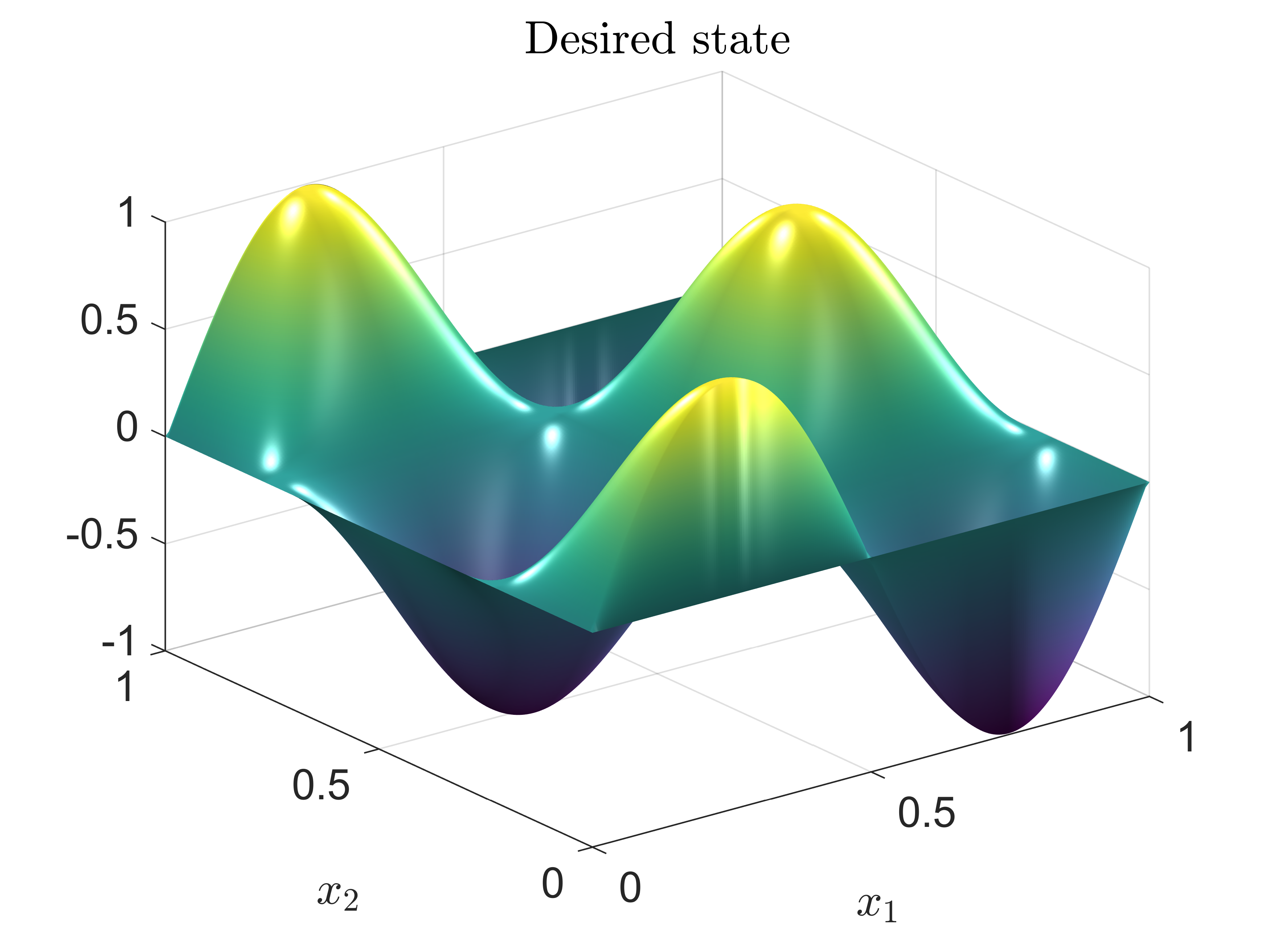}
	\hfill
	\includegraphics[width=0.32\linewidth]{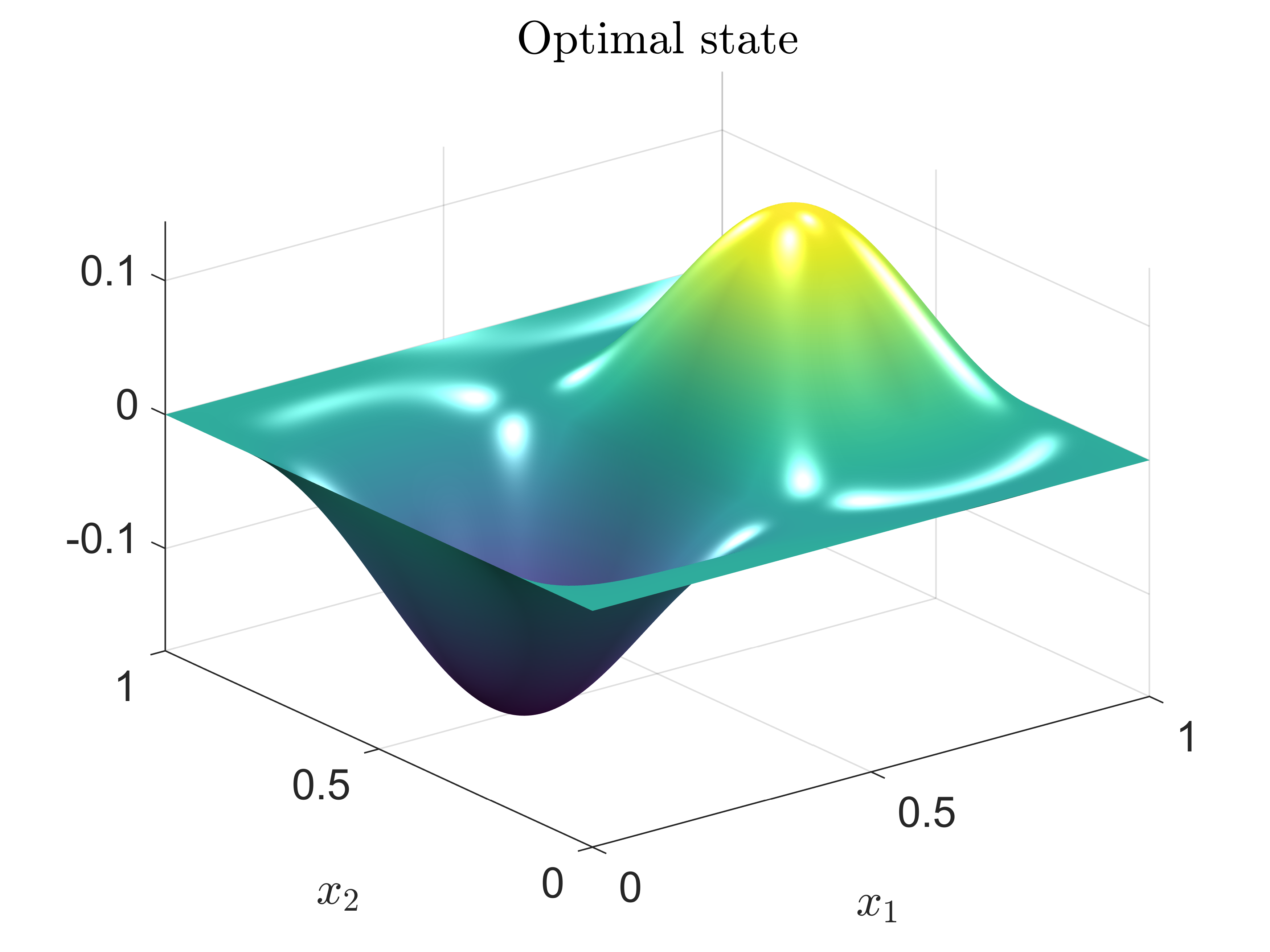}
	\hfill
	\includegraphics[width=0.32\linewidth]{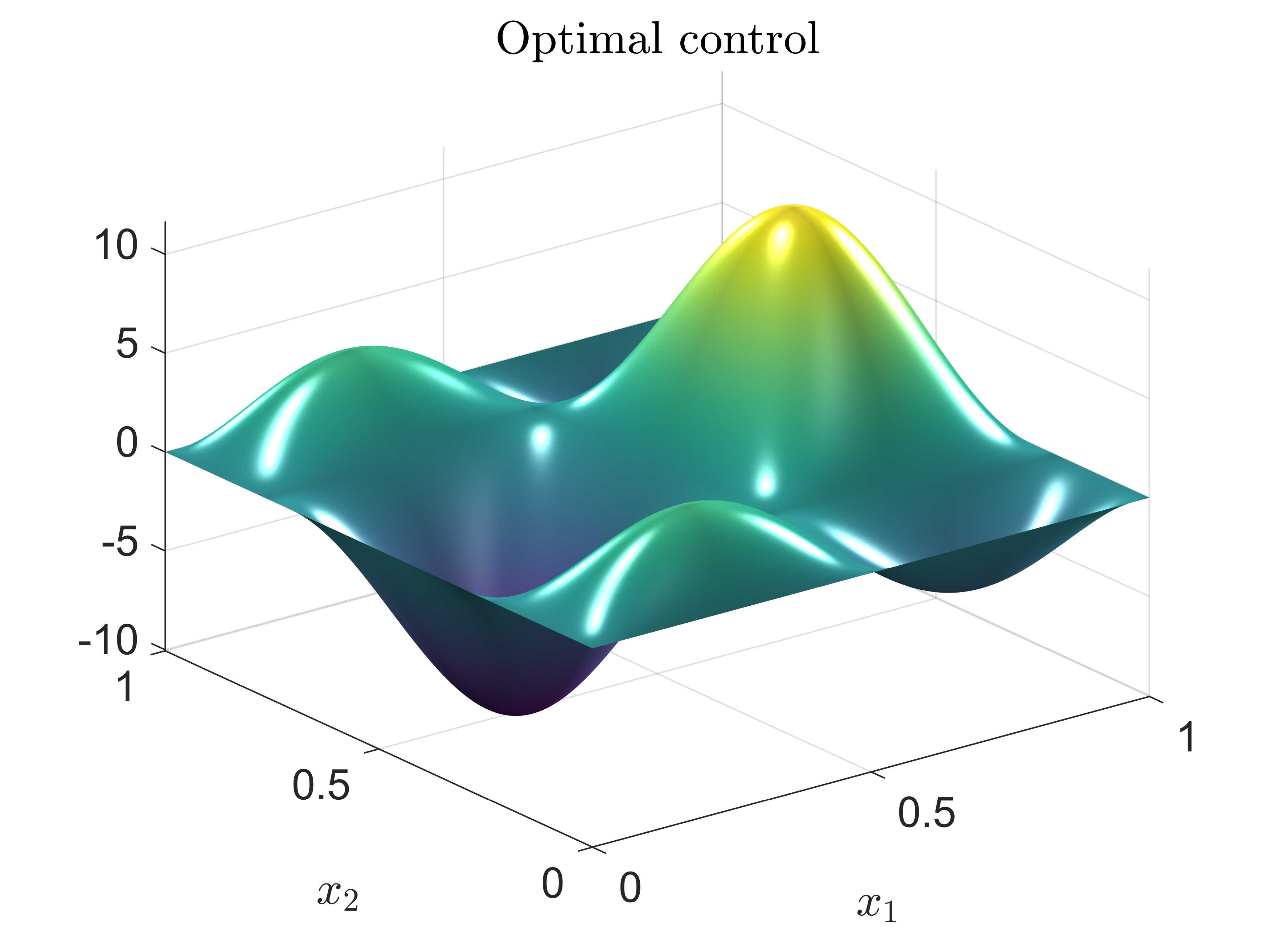}
	\caption{Desired state $y_d$, optimal state $\bar y=y_{\bar u}$, optimal control $\bar u$}
	\label{fig_TP3_graphics}
\end{figure}

\begin{table}
	\scriptsize
	\centering
	\begin{tabular}{cc|c|c|c|c|c|c|c|c|c}
		& & \#it & $\#f$ & $\CP$ & $\alpha=1$ & $\alpha_{\max}$ & $\alpha_{\min}$ & $Q_f/Q_f^3$ & $Q_x/Q_x^3$ & $Q_g/Q_g^3$ \\
		\hline
		Armijo& $(m=0)$ & 14 & 14 & 14 & 14 & $1$ & $1$ & $0.997$/$0.07$ & $0.9987$/$0.26$ & $1.2$/$0.26$ \\
		\MT & ($m=0$)
		& 14 & 17 & 14 & 13 & $85$ & $1$ & $0.784$/$0.12$ & $0.892$/$0.35$ & $0.87$/$0.34$ \\
		\hline
		Armijo & $(m=5)$ 
		& 10 & 10 & 10 & 10 & 1 & 1 & $0.997$/$0.03$ & $0.9987$/$0.17$ & $0.9985$/$0.19$\\
		\MT & ($m=5$)
		& 10 & 13 & 10 & 9 & $85$ & $1$ & $0.784$/$0.03$ & $0.892$/$0.17$ & $0.87$/$0.18$ \\
		\hline
		Armijo & $(m=10)$ 
		& 8 & 8 & 8 & 8 & 1 & $1$ & $0.997$/$0.02$ & $0.9987$/$0.16$ & $0.9985$/$0.15$\\
		\MT & ($m=10$)
		& 8 & 11 & 8 & 7 & 85 & $1$ & $0.784$/$0.03$ & $0.892$/$0.16$ & $0.87$/$0.15$ \\
	\end{tabular}
	\caption{Results of \ref{alg_hybrid} (and simultaneously standard \LBFGS/\BBB) for Example~3, a nonconvex optimal control problem} 
	\label{tab_TP3}
\end{table}

\Cref{tab_TP3} shows, for instance, that the full step is taken in all iterations for the Armijo line search and in all but one iteration for the \MT~line search. A closer inspection reveals that only the first iteration does not use a full step. 
Although the problem is nonconvex, we have $y^T s>0$ in all iterations, cf. the values of $\CP$ in \Cref{tab_TP3}.
For $m=10$ we have $\text{iter}<m$, so any choice $m>10$ produces exactly the same results as for $m=10$.
The values of $Q_f$, $Q_x$ and $Q_g$ indicate that for the \MT~line search we have 
q-linear convergence for the objective values, the iterates and also the gradients. 
Similar to Example~1, the last three columns of \Cref{tab_TP3} hint at the fact that a significant acceleration takes place during the course of the algorithm.

An important property of efficient numerical algorithms for PDE-con\-strained optimization is their \emph{mesh independence} \cite{ABPR86,KS87,AK22}. This roughly means that the number of iterations to reach a prescribed tolerance is insensitive to the mesh size. 
\Cref{tab_TP3b} clearly confirms the mesh independence of Algorithm~\ref{alg_hybrid}. 
Finally, we mention that in this example we used $\sigma=10^{-8}$ for the \MT~line search since $\sigma=10^{-4}$ sometimes failed. 

\begin{table}
	\small
	\centering
	\begin{tabular}{cc|c|c|c|c|c|c|c|c}
		& $M=2^j, j=$ & $4$ & $5$ & $6$ & $7$ & $8$ & $9$ & $10$ & $11$ \\
		\hline
		Armijo & $(m=0)$ & 15 & 14 & 14 & 14 & 14 & 14 & 14 & 14 \\
		\MT & ($m=0$)
		& 15 & 15 & 14 & 14 & 14 & 14 & 14 & 14 \\
		\hline
		Armijo & $(m=5)$ & 10 & 10 & 10 & 10 & 10 & 10 & 10 & 10 \\
		\MT & ($m=5$)
		& 10 & 10 & 10 & 10 & 10 & 10 & 10 & 10 \\
		\hline
		Armijo & $(m=10)$ & 8 & 8 & 8 & 8 & 8 & 8 & 8 & 8 \\
		\MT & ($m=10$)
		& 8 & 8 & 8 & 8 & 8 & 8 & 8 & 8
	\end{tabular}
	\caption{Iteration numbers of \ref{alg_hybrid} (and standard \LBFGS/\BBB) for Example~3 with different mesh sizes. 
		The discretized control belongs to $\CX=\R^N$ with $N=(M-1)^2$. 
		The iteration numbers in each row are practically independent of the mesh size, 
		which is reflective of the fact that the convergence results for \ref{alg_hybrid} hold in $L^2(\Omega)$}
	\label{tab_TP3b}
\end{table}


\section{Conclusion}\label{sec_conclusion}

This work introduces the first globally convergent modification of \LBFGS~that recovers classical \LBFGS~under locally sufficient optimality conditions. 
The strong convergence guarantees of the method rely on several modifications of cautious updating, including the novel idea to decide in each iteration, based on the most current gradient norm only, which storage pairs to use and which ones to skip. 
The method enjoys q-linear convergence for the objective values and $l$-step q-linear convergence for the iterates and the gradients for all sufficiently large $l$. 
The rates of convergence rely on strong convexity and gradient Lipschitz continuity, but only near one cluster point of the iterates; the existence of second derivatives is not required. Global convergence for \WP~line searches is shown without boundedness of the level set and using only continuity of the gradient. 
The rates are also valid for classical \LBFGS~if its iterates converge to a point near which the objective is strongly convex with Lipschitz continuous gradient, which is for instance satisfied if the objective is strongly convex in the level set and has Lipschitz gradients near its unique stationary point.
The results hold in any Hilbert space and also for memory size $m=0$, yielding a new globalization of the \BB~method. 
Numerical experiments support the theoretical findings and suggest that for sufficiently small parameter $c_0$, the new method and \LBFGS~agree entirely. 
This may explain why \LBFGS~is often successful for nonconvex problems. 
		
%
%

\bibliographystyle{alphaurl}
\bibliography{lit}

\end{document}